\numberwithin{equation}{section}
\newtheorem{theorem}{Theorem}[section]
\newtheorem{lemma}[theorem]{Lemma}
\newtheorem{proposition}[theorem]{Proposition}
\theoremstyle{definition}
\newtheorem*{ack}{Acknowledgements}
\newtheorem{remark}[theorem]{Remark}
\newtheorem{definition}[theorem]{Definition}
\renewcommand{\phi}{\varphi}
\renewcommand{\rho}{\varrho}
\renewcommand{\leq}{\leqslant}
\renewcommand{\geq}{\geqslant}
\newcommand{\bs}{\boldsymbol}
\renewcommand{\c}{\mathbf{c}}
\newcommand{\ve}{\varepsilon}
\DeclareMathOperator{\supp}{supp}
\DeclareMathOperator{\cond}{cond}
\newcommand{\sumstar}{\sideset{}{^*}\sum}
\newcommand{\jacobi}[2]{\left(\frac{#1}{#2}\right)}
\renewcommand{\mod}[1]{\hspace{-2.9mm}\pmod{#1}}
\begin{document}

\subjclass[2010]{11F30, 11P55 (11E20)}
\date{\today}

\title[Counting points on quadrics with arithmetic weights]{Counting integer points on quadrics with arithmetic weights}

\author{
 V. Vinay Kumaraswamy}

\address{School of Mathematics\\
University of Bristol\\ Bristol\\ BS8 1TW
\\ UK}

\email{vinay.visw@gmail.com}

\begin{abstract}
Let $F \in \mathbf{Z}[\bs{x}]$ be a diagonal, non-singular quadratic form in $4$ variables. Let $\lambda(n)$ be the normalised Fourier coefficients of a holomorphic Hecke form of full level. We give an upper bound for the problem of counting integer zeros of $F$ with $|\bs{x}| \leq X$, weighted by $\lambda(x_1)$. 
\end{abstract}

\maketitle

\thispagestyle{empty}

\setcounter{tocdepth}{1}
\tableofcontents
\section{Introduction}\label{}
The study of averages of arithmetic functions along thin sequences is a central topic in analytic number theory. For instance, the sum $\textstyle\sum_{n \leq X}a(p(n))$, where $p(n)=n^2+bn+c$ is an integer polynomial, and $a(n)$ are Fourier coefficients of automorphic forms, has been widely studied. For this sum, Hooley ~\cite{H} established an asymptotic formula with a power-saving error term when $a(n) = \tau(n)$, the divisor function, and $p(n)$ is irreducible. The case when $a(n)$ are Fourier coefficients of cusp forms was first settled by Blomer ~\cite{B}, and later refined by Templier and Tsimerman ~\cite{TT}. However, the analogous sum over the primes, i.e. the sum $\textstyle\sum_{n \leq X}\Lambda(p(n))$, where $\Lambda(n)$ is the von Mangoldt function, is much harder to estimate, and this is a long standing open problem. 

Mean values of arithmetic functions over polynomials of higher degree are poorly understood; obtaining an asymptotic formula for the sum $\textstyle\sum_{n \leq X} \tau(n^3+2)$ would represent a significant breakthrough in the subject. However, in the case of polynomials in more than variable, several results have been established. Among the most striking results in this regime are by Friedlander and Iwaniec ~\cite{FI}, on the existence of infinitely many primes of the form $x^2+y^4$, and by Heath-Brown ~\cite{HB01}, on primes of the form $x^3+2y^3$. 

Analogously, for the divisor function, sums of the form $\textstyle\sum_{m,n 
\leq X}\tau(|B(m,n)|)$, where $B(u,v)$ is an integral binary form of degree $3$ or $4$, have been investigated by several authors. For irreducible binary cubic forms, Greaves ~\cite{G2} gave an asymptotic formula for the aforementioned sum, and the sum over irreducible quartic forms was handled by Daniel ~\cite{D}. The case when $B(m,n)$ is not irreducible has also been considered; for example, such sums have been of much interest in problems relating to Manin's conjecture for del Pezzo surfaces. See ~\cite{Br}, where cubic forms are considered, and ~\cite{BB1}, ~\cite{BB2}, ~\cite{BT} and ~\cite{HB02} that treat the case of quartic forms.

Continuing in the same vein as the aforementioned results, the methods used in this paper can be used to prove a result of the following type,
\begin{equation}\label{eq:icanprovethis}
\sum_{m,n \leq X}r(Am^2+Bn^2)\lambda_f(m) \ll_{f,A,B} X^{2-\delta},
\end{equation}
for some $\delta > 0$, where $A$ and $B$ are non-zero integers, $\lambda_f(n)$ are normalised Fourier coefficients of a holomorphic Hecke cusp form $f$ of full level and $r(n)$ is the number of representations of an integer as a sum of two squares. In principle, this corresponds to the case where a cubic form $B(m,n)$ splits over $\mathbf{Q}$ as the product of a linear and a quadratic form\footnote{In this case, $B(m,n)=(Am^2+Bn^2)m$}.

Although we have stated~\eqref{eq:icanprovethis} with the $r$-function, our methods could potentially be adapted to deal with the divisor function. It is worth emphasising that existing results on divisor sums over binary cubic and quartic forms have largely relied on arguments involving the geometry of numbers, and one cannot expect to be able to establish~\eqref{eq:icanprovethis} by relying solely on such methods. Instead, we will draw from techniques in the theory of automorphic forms. Next, we move to our main theorem.

Let $F(\bs{x}) \in \mathbf{Z}[\bs{x}]$ be an integral quadratic form in four variables and let $w(\bs{x}) \in C_0^{\infty}(\mathbf{R}^4)$ be a smooth function with support in $[1/2,2]^4$. Let $$N_F(X) = \sum_{F(\bs{x})=0}w\left(\frac{\bs{x}}{X}\right),$$ count integral solutions to $F=0$ of height less than $X$. As $X \to \infty$, Heath-Brown ~\cite[Theorems 6,7]{HB} established an asymptotic formula for $N_F(X)$ with a power-saving error term (see also recent work of Getz ~\cite{G} where this is refined and a second order main term is given). Now, given an arithmetic function $a(n):\mathbf{N} \to \mathbf{C}$, it is natural to ask if we can count solutions to $F=0$ in which one of the variables is weighted by $a(n)$. More precisely, let 
\begin{equation*}\label{eq:nwfa} N(a;X) = N_F(a;X) = \sum_{F(\bs{x})=0}w\left(\frac{\bs{x}}{X}\right)a(x_1),\end{equation*} 
where $F$ and $w$ are as above. For instance, if $a = \Lambda$ then $N(a;X)$ counts weighted solutions to $F=0$ where one of the co-ordinates is prime. The inhomogeneous case (i.e. counting solutions to $F(\bs{x})=N$ for non-zero $N$), however, has been well-studied. Tsang and Zhao ~\cite{TZ} showed that every sufficiently large integer $N \equiv 4 \pmod{24}$ can be written in the form $p_1^2 + P_2^2 + P_3^2 + P_4^2$, where $p_1$ is a prime, and each $P_i$ has at most $5$ prime factors.  

In this work, we investigate the case where the $a(n)$ are Fourier coefficients of a holomorphic cusp form. Suppose that a holomorphic cusp form $f(z)$ has Fourier expansion $f(z) =  \sum_{n=1}^{\infty} \lambda(n)n^{\frac{k-1}{2}}e(nz),$ and then set $a(n) = \lambda(n)$. Our main result is 
\begin{theorem}\label{mainthm}
Let $F(\bs{x}) \in \mathbf{Z}[\bs{x}]$ be a non-singular diagonal quadratic form in $4$ variables, and let $w$ be a smooth function with compact support in $[1/2,2]^4$. Let $\lambda(n)$ be the normalised Fourier coefficients of a holomorphic Hecke cusp form $f$ of full level. Then for all $\ve > 0$ we have
\begin{equation*}
N(\lambda;X) \ll_{\ve,f,F,w} X^{2-\frac{1}{6}+\ve}.
\end{equation*}
\end{theorem}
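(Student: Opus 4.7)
The plan is to attack $N(\lambda;X)$ by the $\delta$-symbol method of Duke--Friedlander--Iwaniec and Heath-Brown, paired with Poisson summation in the three variables $\bs{y}=(x_2,x_3,x_4)$ and Voronoi summation in $x_1$. Since $F$ is diagonal we split $F(\bs{x})=a_1x_1^2+G(\bs{y})$, where $G$ is a non-singular diagonal ternary form. Applying the delta symbol with auxiliary parameter $Q\asymp X$ converts $N(\lambda;X)$ into
$$\frac{1}{Q}\sum_{q\leq Q}\frac{1}{q}\sumstar_{a\bmod q}\sum_{\bs{x}}\lambda(x_1)\,w(\bs{x}/X)\,e\!\left(\tfrac{aF(\bs{x})}{q}\right)h\!\left(\tfrac{q}{Q},\tfrac{F(\bs{x})}{Q^2}\right),$$
whose trivial bound is $X^{2+\ve}$, so the whole game will be to save a further factor of $X^{1/6}$ from the two transformations to come.

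For the Poisson step, each $y_j$-sum will be broken into residue classes modulo $q$ and transformed; the resulting one-dimensional quadratic Gauss sum evaluates (on a squarefree modulus coprime to the discriminant of $G$) to a unimodular constant times $q^{1/2}\,e(-\overline{4aa_j}c_j^{\,2}/q)$, and the Fourier transform of $w$ localises the dual variable $\bs{c}=(c_2,c_3,c_4)$ to $|\bs{c}|\ll q/X$. Genuine off-diagonal contributions can therefore only occur for $q\asymp X$. The term $\bs{c}=\bs{0}$ reduces to a Ramanujan sum in $a_1x_1^2$ and can be absorbed using standard cancellation of $\sum_n\lambda(n)$ along arithmetic progressions. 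For $\bs{c}\neq\bs{0}$ the average over $a\bmod q$ assembles into a Kloosterman sum
$$K\!\left(a_1x_1^2,\;-\overline{4}\,G^*(\bs{c})\,;\,q\right),$$
with $G^*$ the form dual to $G$ modulo $q$, and the remaining task is to estimate this against $\lambda(x_1)$ with a smooth weight.

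Voronoi summation in $x_1$ is the principal difficulty: the character coupling $x_1$ is the \emph{quadratic} phase $e(aa_1x_1^2/q)$, which blocks a direct transform. The plan is to unravel it by partitioning $x_1$ into residue classes modulo $q$, applying the Voronoi formula for $\Gamma_0(1)$ to each progression sum, and recombining by additive orthogonality; each step produces a one-dimensional complete exponential sum (a quadratic Gauss sum from the $b$-sum, a Ramanujan sum from the $a$-sum) which, by the diagonality of $F$, factorises into a product bounded by Weil's estimate. The outcome will be a dual $n$-sum of the shape
$$\sum_n\lambda(n)\,\check w_1\!\left(\tfrac{nX}{q^2}\right)\,C(n,\bs{c};q),$$
with $|C(n,\bs{c};q)|\ll q^{3/2+\ve}$ and $n$ effectively supported at $n\asymp q^2/X$.

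Inserting these bounds into the triple sum over $q$, $\bs{c}$ and $n$, restricting to the effective ranges $q\asymp X$, $|\bs{c}|\ll 1$ and $n\asymp X$, and handling the dual $\lambda$-sum by the Rankin--Selberg bound $\sum_{n\leq Y}|\lambda(n)|^2\ll Y^{1+\ve}$ (plus a Cauchy--Schwarz step where needed), a direct balance of terms should yield an overall saving of $X^{1/6}$, hence the claimed exponent $2-\tfrac{1}{6}+\ve$. The hardest part will be the management of the quadratic phase just before Voronoi: the residue-class splitting inflates the effective modulus from $q$ to $q^2$ and introduces Sali{\'e}-type rather than genuine Kloosterman-type cancellation, and it is precisely how efficiently square-root cancellation can be restored at that stage that pins the saving down to $X^{1/6}$.
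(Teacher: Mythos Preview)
Your proposal has two structural gaps that would prevent the argument from closing.

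First, the localisation $|\bs{c}|\ll q/X$ is wrong in this setting. After Poisson summation the test function is not just $w$ but the product $w(\bs{x}/X)h(q/Q,F(\bs{x})/Q^2)$, and the $y$-derivatives of $h$ are of size $(X/q)^j$. Consequently the Fourier transform only forces $|\bs{c}'|\ll X^{\ve}$, uniformly in $q$ (this is Heath-Brown's Lemma~19, reproduced here as Lemma~\ref{lemmatruncatec'}). Off-diagonal frequencies therefore contribute for \emph{every} $q\ll X$, and for small $q$ the oscillatory integral $I_q(\bs{c})$ has derivatives of size $X/q$, which makes the Voronoi dual sum too long to be useful on its own. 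The paper handles this by using Voronoi only for $q\le Y$ and partial summation for $q>Y$, balancing at $Y=X^{1/2}$; either ingredient alone fails.

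Second, and more seriously, you have inverted the difficulty of the two cases. The term $\bs{c}'=\bs{0}$ (and more generally any $\bs{c}'$ with $F^{-1}(0,\bs{c}')=0$) is not a side case that ``can be absorbed using standard cancellation'': it is the bottleneck that produces the exponent $2-\tfrac16$. When $F^{-1}(0,\bs{c}')=0$ the $a$-sum does \emph{not} assemble into a genuine Kloosterman sum; it degenerates to a Gauss-type sum $T(A_1n^2,0;q)$, so there is no Weil-type square-root saving available from the arithmetic side. What the paper does instead is take the Mellin transform of the weight, prove via a stationary-phase argument (Lemmas~\ref{lemmaczero} and \ref{csmall}) that $I_q(\bs{c}',s)$ is essentially supported on $|t|\ll X/q$, and then invoke the Weyl-strength subconvexity bound $L(\tfrac12+it,f\otimes\chi)\ll(1+|t|)^{1/3+\ve}$ of Booker--Milinovich--Ng. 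The factor $(X/q)^{1/3}$ coming from that $1/3$ is exactly the origin of the saving $X^{1/6}$ after summing over $q$. Your Voronoi-plus-Rankin--Selberg endgame never touches an $L$-function bound and cannot recover this; at best it reproduces the treatment of the complementary set $F^{-1}(0,\bs{c}')\neq 0$, which in the paper actually gives the stronger bound $X^{7/4+\ve}$.
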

To obtain~\eqref{eq:icanprovethis}, the above result is applied to the quadratic form $F(\bs{x}) = Ax_1^2+Bx_2^2-x_3^2-x_4^2$. 

From Heath-Brown's work on estimating $N_F(X)$ and Deligne's bound for $\lambda(n)$, we obtain the `trivial' bound $N(\lambda;X) \ll_{\ve} X^{2+\ve}$. Consequently, Theorem ~\ref{mainthm} detects cancellation owing to the sign changes of $\lambda(n)$. Moreover, although we shan't pursue it here, our method of proof allows us to handle slightly more general quadratic forms of the shape $A_1x_1^2+Q(x_2,x_3,x_4)$. It would also be very interesting to obtain an analogue of Theorem ~\ref{mainthm} with two arithmetic weights. 

Our proof of Theorem ~\ref{mainthm} uses the $\delta$-method, which in its current form has its genesis in the work of Duke, Friedlander and Iwaniec ~\cite{DFI} on the subconvexity problem for $GL_2$ $L$-functions. In our present work, it is more convenient to use a variant of this method developed by Heath-Brown ~\cite{HB}. 
\begin{remark}
One could also consider estimating $N(\lambda;X)$ by parametrising solutions to $F=0$. To illustrate this, let $F = x_1x_2-x_3^2-x_4^2$. Solutions to $F=0$ in $\mathbf{P}^3$ can be parametrised as $[y_2^2+y_3^2:y_1^2:y_1y_2:y_1y_3]$, with $[y_1:y_2:y_3]\in \mathbf{P}^2$. Thus studying $N(\lambda;X)$ reduces to studying sums of the form
\begin{equation*}
\begin{split}
\sum_{g \leq X}\hspace{0.2cm} \sum_{\substack{(y_1,y_2^2+y_3^2)=g \\ y_2^2+y_3^2 \leq gX \\ y_1^2 \leq gX \\ (y_1,y_2,y_3)=1}}\lambda(y_2^2+y_3^2).
\end{split}
\end{equation*}
The innermost sum can potentially be analysed by the methods developed in ~\cite{TT}, although the additional GCD condition makes it a challenging prospect.
\end{remark}
The methods used in this article appear to extend  to cover the case when $f$ is not holomorphic. In this case, we have the bound $$\lambda(n) \ll_{\ve,f} n^{\frac{7}{64}+\ve}$$ due to Kim and Sarnak ~\cite{KS}, but this does not affect the analysis significantly. With more effort, one could also establish a similar result for forms with arbitrary level and central character.

Finally, if $f$ is not a cusp form, we will have to account for the appearance of a main term, but the analysis of the error terms will remain unchanged. Although we omit the details, the proof of Theorem ~\ref{mainthm} can be suitably modified to give an asymptotic formula for $N(a;X)$ when $a(n) = \tau(n)$ or $r(n)$.

We end our introduction by highlighting some of the key ideas in the proof of Theorem ~\ref{mainthm}. As is typical when applying the $\delta$-method, an application of Poisson summation leads us to study sums that are essentially of the form
\begin{equation}\label{eq:intro}
X\sum_{\substack{\bs{c}' \in \mathbf{Z}^3 \\ |\bs{c}'| \ll X^{\ve}}}\sum_{q \ll X}q^{-\frac{3}{2}}\sum_{n \ll X} \lambda(n)T(A_1n^2,F^{-1}(0,\bs{c}');q)I_q(n,\bs{c}').
\end{equation}
Here $I_q(n,\bs{c}')$ is an exponential integral, $F^{-1}$ is the quadratic form dual to $F$, and $T(m,n;q)$ is a certain one-dimensional exponential sum of modulus $q$ which, on average, admits square-root cancellation (for fixed $m$, say). The derivatives $\frac{\partial^j}{\partial n^j}I_q(n,\bs{c}')$ depend polynomially on $X/q$, and determining how to control them is one of the main challenges we shall face. 

Using Deligne's bound for $\lambda(n)$ and the bound $I_q(n,\bs{c}') \ll 1$, we see that the sum in ~\eqref{eq:intro} is $O(X^{2+\ve}).$ This will be our starting point, and our objective is to make some saving in the $n$-sum. The analysis differs according to the vanishing of $F^{-1}(0,\bs{c}')$. 

If $F^{-1}(0,\bs{c}') = 0$, then $T(A_1n^2,0;q)$ is essentially a Gauss sum, which can be evaluated explicitly.  The problem then reduces to estimating sums of the form $\sum_{d \mid n}\chi(n)\lambda(n)I_q(n,\bs{c}'),$ for $\chi$ a Dirichlet character with conductor $O_{F}(1)$. One of the main novelties of our work is controlling the Mellin transform of $I_q(n,\bs{c}')$ by means of a stationary phase argument. The above sum can then be estimated via Perron's formula, which leads to requiring a subconvexity estimate for the twisted $L$-function, $L(s,f\otimes \chi)$. 

On the other hand, if $F^{-1}(0,\bs{c}')$ does not vanish, Voronoi's formula works well when $q$ is a small power of $X$. Indeed, if $w$ has support in $[X,2X]$ and its derivatives satisfy the bound $w^{(j)}(x) \ll_j x^{-j}$, Voronoi's identity transforms the sum $\textstyle\sum \lambda(n)e_q(an)w(n)$ to a `short' sum of length about $q^2/X$, when $(a,q)=1$. However, in our current regime, the derivatives of $I_q(n,\bs{c}')$ are too large for small $q$, and we must balance these opposing forces to make a saving in the $n$-sum. When $q$ is large, the derivatives are under control, and we can estimate the $n$-sum using partial summation making use of the classical bound $\sum_{n \leq X}e_q(\alpha n)\lambda(n) \ll X^{1/2}\log X.$ 
 
\begin{ack}
I would like to thank my supervisor, Tim Browning, for suggesting this problem to me, and for his detailed comments on earlier drafts of the paper which have significantly improved its exposition. I would also like to thank the referee for their many helpful comments, corrections and suggestions (especially for suggesting the use of Lemma~\ref{referee} in the proof of Theorem~\ref{mainthm}), that greatly helped me refine my thinking through this paper. Part of this work was done while I was a Program Associate in the Analytic Number Theory Program at the Mathematical Sciences Research Institute, Berkeley, USA, during Spring Semester 2017, which was supported by the National Science Foundation under grant no. DMS-1440140.
\end{ack}
\subsection*{Notation}
We write $4$-tuples $\bs{c} = (c_1,\ldots,c_4)$ as $\mathbf{c} = (c_1,\bs{c'})$, where $\bs{c'} = (c_2,c_3,c_4)$ is  a 3-tuple. Let $S(m,n;q) = \textstyle\sum_{x \pmod{q}}^{*}e_q(mx+n\overline{x})$ denote the standard Kloosterman sum, and let $c_q(m) = S(m,0;q)$ be Ramanujan's sum. For an integer $n$, $v_p(n)$ will denote its valuation at a prime $p$. If $F$ is a non-singular quadratic form, we denote by $F^{-1}$ the form dual to $F$; by $\Delta$ we denote the discriminant of $F$. We use the notation $\bs{1}_{S}$ to denote the indicator function of a set $S$. Given a smooth function $w:\mathbf{R}^n \to \mathbf{R}$, we will denote by $\|w\|_{N,1}$ its $L^1$ Sobolev norm of order $N$. All implicit constants will be allowed to depend on the quadratic form $F$, the cusp form $f$ and the weight function $w$. Any further dependence will be indicated by an appropriate subscript.
\section{Preliminaries}
\subsection{Bessel functions}
We begin by recalling some basic properties of the $J$-Bessel function that we will need for the proof of Theorem~\ref{mainthm}. For $x > 0, \nu \geq 2$, we have
\begin{equation}\label{eq:boundforjbessel}
\left(\frac{x}{1+x}\right)^kJ_{\nu}^{(k)}(x) \ll_{k,\nu} \frac{x^{\nu}}{(1+x)^{\nu+\frac{1}{2}}}.
\end{equation}
In particular, observe that $xJ'_{\nu}(x) \ll_{\nu} 1$, for $x \ll 1$. We will also make use of the recurrence relation
\begin{equation}\label{eq:besselrecurrence}
(x^{\nu}J_{\nu}(x))' = x^{\nu}J_{\nu-1}(x).
\end{equation}
For $\nu \geq 2$ define $$W_{\nu}(x) = \frac{e^{i(\frac{\pi}{2}\nu - \frac{\pi}{4})}}{\Gamma(\nu+\frac{1}{2})}\sqrt{\frac{2}{\pi x}}\int_{0}^{\infty}e^{-y}\left(y(1+\frac{iy}{2x})\right)^{\nu-\frac{1}{2}}\, dy.$$
By~\cite[Page 206]{W44}, we have
\begin{equation}\label{eq:hankeldefinition}
J_{\nu}(x) = e^{ix}W_{\nu}(x) + e^{-ix}\overline{W_{\nu}(x)}.
\end{equation}
Moreover, one can verify that for $a \geq 0$ we have \begin{equation}\label{eq:boundforhankel}x^aW_{\nu}^{(a)}(x) \ll_{a,\nu} x(1+x)^{-\frac{3}{2}},\end{equation}
whenever $x \gg 1$. 

\subsection{Summation formulae}
The following lemma is a standard application of Poisson summation.
\begin{lemma}\label{poisson1}
Let $w(x)$ be a smooth function with compact support. Then
\begin{equation}
\sum_{m \equiv b \mod{q}} w(m) = \frac{1}{q} \sum_{m \in \mathbf{Z}} \widehat{w}\left(\frac{m}{q}\right)e_q(bm),
\end{equation}
and $\widehat{w}$ denotes the Fourier transform of $w$.
\end{lemma}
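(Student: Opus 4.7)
The plan is to reduce the congruence condition to a sum over all integers by substitution, and then apply the classical Poisson summation formula on $\mathbf{Z}$. Since $w$ is smooth and compactly supported, convergence issues on either side will not arise, and the relevant Fourier transforms are Schwartz functions.

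First, I would write every integer $m \equiv b \pmod q$ uniquely as $m = b + qn$ with $n \in \mathbf{Z}$. This transforms the left-hand side into
\begin{equation*}
\sum_{m \equiv b \mod q} w(m) = \sum_{n \in \mathbf{Z}} g(n), \qquad g(x) := w(b + qx).
\end{equation*}
Here $g$ is itself smooth and compactly supported, so the classical Poisson summation formula $\sum_{n \in \mathbf{Z}} g(n) = \sum_{n \in \mathbf{Z}} \widehat{g}(n)$ applies without any subtlety.

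Next I would compute $\widehat{g}$ in terms of $\widehat{w}$ by a linear change of variables. With the convention $\widehat{w}(\xi) = \int_{\mathbf{R}} w(y) e(-y\xi)\, dy$, the substitution $y = b+qx$ gives
\begin{equation*}
\widehat{g}(n) = \int_{\mathbf{R}} w(b+qx)\, e(-nx)\, dx = \frac{e(nb/q)}{q}\int_{\mathbf{R}} w(y)\, e(-ny/q)\, dy = \frac{1}{q}\, e_q(bn)\, \widehat{w}\!\left(\frac{n}{q}\right).
\end{equation*}
Substituting this back into Poisson and relabelling the summation index $n$ as $m$ yields the claimed identity. The only thing worth a sanity check is the convention for the Fourier transform and the sign in $e_q(bn)$; with the standard convention used in the paper, the signs match as written. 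There is no real obstacle here, as this lemma is purely a bookkeeping reformulation of Poisson summation, included for the reader's convenience.
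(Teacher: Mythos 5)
Your argument is correct and is exactly the standard derivation the paper has in mind: the paper offers no proof, simply calling the lemma ``a standard application of Poisson summation,'' and your substitution $m=b+qn$ followed by classical Poisson for $g(x)=w(b+qx)$ is that application. The change-of-variables computation of $\widehat{g}(n)=q^{-1}e_q(bn)\widehat{w}(n/q)$ is accurate with the paper's conventions, so nothing is missing.
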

Next, we state a form of the Voronoi summation formula. For a proof, we refer the reader to ~\cite[Proposition 2.1]{FGKM}.
\begin{lemma}\label{progr}
Let $g(x)$ be a smooth function with compact support, and $\lambda(m)$ be the normalised Fourier coefficients of a cusp form of weight $k$ and full level. We then have
\begin{equation}
\sum_{m \equiv b \mod{q}} \lambda(m)g(m) =  \frac{1}{q} \sum_{d \mid q} \sum_{m=1}^{\infty}\lambda(m)S(b,m;d)\check{g}_d(m),
\end{equation}
where 
\begin{equation}\label{eq:hankeltransform}
\check{g}_d(m) = \frac{2\pi i^{k}}{d} \int_{0}^{\infty} g(x) J_{k-1}\left(\frac{4\pi}{d}\sqrt{xm}\right) \, dx,
\end{equation}
is a Hankel-type transform of $g$.
\end{lemma}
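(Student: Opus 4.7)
The plan is to derive the arithmetic-progression version of Voronoi summation from the classical single-fraction Voronoi formula by detecting the congruence $m \equiv b \pmod{q}$ via additive characters and organising the resulting sum over $a \pmod q$ by the denominator of $a/q$ in lowest terms. The classical Voronoi identity for a weight-$k$ full-level Hecke cusp form states that for any $(a',d)=1$ and any $g \in C_c^\infty(0,\infty)$,
\begin{equation*}
\sum_{m=1}^{\infty} \lambda(m)\, e_d(a'm)\, g(m) \;=\; \frac{2\pi i^{k}}{d} \sum_{m=1}^{\infty} \lambda(m)\, e_d(-\overline{a'}m) \int_0^{\infty} g(x)\, J_{k-1}\!\left(\frac{4\pi}{d}\sqrt{mx}\right) dx,
\end{equation*}
which I would cite from, e.g., the same reference \cite{FGKM}. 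Every step below is a manipulation of characters and Kloosterman sums, so no analytic subtlety intervenes beyond this input.

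First, I would write
\begin{equation*}
\mathbf{1}_{m \equiv b \,(q)} \;=\; \frac{1}{q}\sum_{a \,(q)} e_q\bigl(a(m-b)\bigr),
\end{equation*}
so that the left-hand side of the lemma equals $q^{-1}\sum_{a \,(q)} e_q(-ab)\sum_m \lambda(m)\, e_q(am)\, g(m)$. Next I would group the characters by the denominator of $a/q$ in lowest terms: for each divisor $d \mid q$, write $a = (q/d)\,a'$ with $a' \pmod d$ and $(a',d)=1$, so that $e_q(am) = e_d(a'm)$ and $e_q(-ab) = e_d(-a'b)$. This rewrites the sum as
\begin{equation*}
\frac{1}{q}\sum_{d \mid q} \sum_{\substack{a' \,(d) \\ (a',d)=1}} e_d(-a'b) \sum_{m=1}^{\infty} \lambda(m)\, e_d(a'm)\, g(m).
\end{equation*}

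Now I would apply the classical Voronoi identity to the inner $m$-sum for each admissible $a'$, pull the integral transform outside the $a'$-sum (since it depends only on $d$ and $m$), and collect the remaining character sum:
\begin{equation*}
\sum_{\substack{a' \,(d) \\ (a',d)=1}} e_d\bigl(-a'b - \overline{a'}m\bigr).
\end{equation*}
The substitution $a' \mapsto -a'$ converts this into $\sum^{*}_{x\,(d)} e_d(xb + \overline{x}m) = S(b,m;d)$, the Kloosterman sum in the statement. Recombining the factor $2\pi i^k/d \cdot \int_0^\infty g(x) J_{k-1}(4\pi\sqrt{mx}/d)\,dx = \check g_d(m)$ finishes the identity.

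There is no genuine obstacle here since the entire argument is character-theoretic once the classical Voronoi formula is taken as input; the only point that needs care is the bookkeeping of the sign when converting the exponential sum into a Kloosterman sum, i.e.\ verifying that the substitution $a' \mapsto -a'$ gives the correct arguments of $S(b,m;d)$ under the stated convention $S(m,n;q) = \sum^{*}_{x\,(q)} e_q(mx + n\overline{x})$. Convergence is not an issue because $g$ has compact support and the $m$-sum on the right converges absolutely from the rapid decay of $\check g_d(m)$ guaranteed by integrating $g$ against the Bessel function.
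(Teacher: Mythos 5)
Your derivation is correct. The paper gives no proof of its own here---it simply cites \cite[Proposition 2.1]{FGKM}---and your argument (detecting the congruence by additive characters, grouping the fractions $a/q$ by their reduced denominator $d\mid q$, applying the single-modulus Voronoi formula, and reassembling the $a'$-sum into $S(b,m;d)$ via $a'\mapsto -a'$) is precisely the standard derivation carried out in that reference; note also that the sign bookkeeping you flag is harmless in the end, since $S(b,m;d)=S(-b,-m;d)$.
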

\begin{lemma}\label{trunc}
Let $g \in C^{\infty}_{0}(\mathbf{R})$ be a smooth function with support in $[1/2,2]$. Then for any $l \geq 0$ we have
\begin{equation*}
\int_{0}^{\infty} g(x)J_{k-1}\left(t\sqrt{x}\right) \, dx \ll_l \|g\|_{l,1}t^{-(l+1/2)}. 
\end{equation*}
\end{lemma}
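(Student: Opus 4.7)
The first bound is immediate: $|J_{k-1}(y)| \ll 1$ uniformly in $y \geq 0$, and $g$ has support of measure $3/2$, so the integral is trivially $O(\|g\|_\infty)$. The substance lies in the second bound, for which one may restrict to $t \geq 1$. Indeed, for $t \leq 1$ the factor $t^{-(l+1/2)}$ exceeds $1$, so the bound follows from the first together with the trivial comparison $\|g\|_\infty \ll \|g\|_{1,l}$ on a fixed compact support.

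Assuming $t \geq 1$, the plan is to substitute $u = \sqrt{x}$, so that
\[
\int_0^\infty g(x) J_{k-1}(t\sqrt{x})\,dx = 2\int_{1/\sqrt{2}}^{\sqrt{2}} u\, g(u^2)\, J_{k-1}(tu)\,du,
\]
and then split $J_{k-1}$ into oscillatory exponentials via the Hankel-type decomposition
\[
J_{k-1}(y) = W_+(y)e^{iy} + W_-(y)e^{-iy},
\]
valid for $y > 0$, where $W_\pm \in C^\infty((0,\infty))$ satisfy the derivative estimates $W_\pm^{(j)}(y) \ll_j y^{-1/2-j}$. On the support of the integrand, $tu \geq t/\sqrt{2} \geq 1/\sqrt{2}$, so each $u$-derivative of $W_\pm(tu)$ is controlled by $t^{j}(tu)^{-1/2-j} \ll t^{-1/2}$ uniformly for $j \leq l$.

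The integral then splits into two oscillatory pieces of the form $\int V_\pm(u)\, e^{\pm itu}\,du$ with $V_\pm(u) = u\, g(u^2)\, W_\pm(tu)$. Each $V_\pm$ is compactly supported in $(0,\infty)$, so integrating by parts $l$ times produces no boundary contribution and gains a factor of $t^{-l}$. The Leibniz expansion of $V_\pm^{(l)}$ is dominated by $t^{-1/2}$ times a sum of derivatives of $g$ of order at most $l$ (composed with the smooth change of variable $u \mapsto u^2$), so integrating over the fixed interval $[1/\sqrt{2},\sqrt{2}]$ yields the desired bound $\ll_l t^{-(l+1/2)}\|g\|_{1,l}$.

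The main delicate step is the Hankel decomposition of $J_{k-1}$ together with the stated derivative estimates on $W_\pm$; this is a standard consequence of the large-argument asymptotic expansion for Hankel functions $H^{(1,2)}_{k-1}$, but it is the point where the real analytic input lies. Once these estimates are in hand, the integration by parts and product-rule bookkeeping are routine.
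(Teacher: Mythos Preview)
Your argument is correct, but it follows a genuinely different route from the paper's own proof. You split $J_{k-1}(tu)$ via the Hankel decomposition $J_{k-1}=W_+e^{iy}+W_-e^{-iy}$ with symbol-type bounds $W_\pm^{(j)}(y)\ll_j y^{-1/2-j}$, and then treat each piece as an oscillatory integral in $u$, gaining $t^{-1}$ per integration by parts against the phase $e^{\pm itu}$ and extracting the extra $t^{-1/2}$ from the amplitude $W_\pm(tu)$. The paper instead performs the change of variable $x=\alpha y^2$ with $\alpha=t^{-2}$, so that the Bessel argument becomes simply $y$, and then integrates by parts using the recursion $\frac{d}{dy}\bigl(y^{k}J_{k}(y)\bigr)=y^{k}J_{k-1}(y)$; each step raises the Bessel index by one and introduces a factor $y^{-1}$, so after $l$ steps one is left with $J_{k-1+l}(y)/y^{l-1}$ and the uniform bound $J_{k-1+l}(y)\ll(1+y)^{-1/2}$ together with $y\asymp t$ finishes the argument. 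The paper's approach has the virtue of avoiding the Hankel-function asymptotics entirely and keeping everything inside the Bessel family via an exact identity; your approach is equally standard, perhaps more flexible (it reduces the problem to pure oscillatory integrals and would adapt with no change if $J_{k-1}$ were replaced by any function admitting such a symbol decomposition), at the cost of invoking the derivative estimates for $W_\pm$, which you correctly identify as the one point requiring outside input.
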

\begin{proof}
Denote the left hand side above by $I(t)$. Although this is a standard argument, we present a proof from ~\cite[Proposition 2.3]{FGKM}. Set $\alpha = t^{-2}$. Making the change of variables $x \to \alpha y^2$ we see that
\begin{equation*}
I(t) = 2\alpha \int_{0}^{\infty} g(\alpha y^2) y J_{k-1}(y) \, dy.  
\end{equation*}
By~\eqref{eq:besselrecurrence} and by repeated integration by parts we have
\begin{equation*}
I(t) = 2\alpha \int_{0}^{\infty} \left\{\sum_{0\leq v \leq l} \xi_{v,l} (\alpha y^2)^v g^{(v)}(\alpha y^2)\right\} \frac{J_{k-1+l}(y)}{y^{l-1}} \, dy,
\end{equation*}
for some constants $\xi_{v,l}$. Since $J_{k-1+l}(y) \ll (1+y)^{-1/2}$ and $y \asymp \alpha^{-\frac{1}{2}}$, we see that
\begin{equation*}
I(t) \ll_l \|g\|_{l,1}t^{-(l+1/2)}.
\end{equation*}
This completes the proof.
\end{proof}

\subsection{Some facts about $L$-functions}
In this section, we collect some standard facts about $L$-functions; we refer the reader to~\cite[Chapter 5]{IK} for a more comprehensive account of the theory. Let $f$ be a Hecke eigenform of weight $k$ and full level with normalised Fourier coefficients $\lambda(n)$ as before. Let $\chi$ be a primitive Dirichlet character with conductor $D$. For $\sigma > 1$ let 
\begin{equation*}\label{eq:lsfotimeschi}
L(s,f \otimes \chi) = \sum_{n=1}^{\infty} \frac{\chi(n)\lambda(n)}{n^s}.
\end{equation*}
Then $L(s,f \otimes \chi)$ has analytic continuation to the entire complex plane, satisfies a functional equation, and has the Euler product
\begin{equation}\label{eq:eulerproduct}
L(s, f\otimes \chi) = \prod_{p} \left(1-\frac{\chi(p)\lambda(p)}{p^s} + \frac{\chi^2(p)}{p^{2s}}\right)^{-1}
\end{equation} 
for $\sigma > 1$. Applying the Phragm\'en-Lindel\"of principle in the region $\frac{1}{2} \leq \sigma \leq 1$ to $L(s,f\otimes \chi^{})$, we get that
\begin{equation}\label{eq:phraglind}
L(s,f\otimes \chi) \ll_{\ve,f} (D(1+|t|))^{1-\sigma+\ve},
\end{equation} 
for any $\ve > 0$. When $\sigma = \frac{1}{2}$, we can improve on ~\eqref{eq:phraglind}. We record the following subconvexity bounds for $L(s,f\otimes \chi)$. It follows from ~\cite{BMN} that there exists $A > 0$ such that for all $\ve > 0$ we have
\begin{equation}\label{eq:weyl}
L(s,f\otimes \chi) \ll_{\ve,f} D^A(1+|t|)^{\frac{1}{3}+\ve}.
\end{equation}
Although they are not used here, `hybrid' subconvexity bounds for $L(s,f \otimes \chi)$ are also known, thanks to the work of Blomer and Harcos ~\cite{BH}, and Munshi ~\cite{M}: there exists $\delta >0$ such that for all $\ve >0$ we have
\begin{equation*}
L(s,f \otimes \chi) \ll_{\ve,f} (D(1+|t|))^{\frac{1}{2}-\delta+\ve}.
\end{equation*}

\section{Setting up the $\delta$-method}
Let $$\delta(n) = \begin{cases} 1 &\mbox{$n = 0$, } \\
0 &\mbox{otherwise.}\end{cases}$$
By ~\cite[Theorem 1]{HB} there exists a function $h:\mathbf{R}^{+} \times \mathbf{R} \to \mathbf{R}$ such that for any $Q \geq 1$,  
\begin{equation*}
\delta(n) = c_Q Q^{-2} \sum_{q=1}^{\infty} \sideset{}{^{*}}\sum_{a \mod{q}} e_q(an) h\left(\frac{q}{Q},\frac{n}{Q^2}\right),
\end{equation*}
where $c_Q = 1 + O_A(Q^{-A})$. The function $h(x,y)$ vanishes unless \linebreak $x \leq \min(1,2|y|).$ Its derivatives satisfy the bound \begin{equation}\label{eq:derh}\frac{\partial^{{a+b}}}{\partial x^a y^b}h(x,y) \ll_{N,a,b} x^{-1-a-b}\left(\delta(b)x^N+\min\left(1,\frac{x}{|y|}\right)^N\right),\end{equation} and $h(x,y)$ resembles the $\delta$-distribution in the following sense:
$$\int_{\mathbf{R}} h(x,y)f(y) \, dy = f(0) + O_{f,N}(x^N).$$
Using the $\delta$-symbol to detect the equation $F(\bs{x}) = 0$ we see that
\begin{equation}\label{eq:1}
N(\lambda,X) = c_QQ^{-2}\sum_{q=1}^{\infty}\sideset{}{^{*}}\sum_{a \mod{q}}\sum_{\bs{x} \in \mathbf{Z}^4}\lambda(x_1)e_q(aF(\bs{x}))w\left(\frac{\bs{x}}{X}\right)h\left(\frac{q}{Q},\frac{F(\bs{x})}{Q^2}\right).
\end{equation}
We will take $Q=X$ in our application of the $\delta$-method, since $F(\bs{x})$ is typically of size $X^2$ when $\bs{x}$ is of size $X$. For the rest of this note, we fix the quadratic form to be $$F(\bs{x}) =  A_1x_1^2 + \ldots + A_4x_4^2,$$ 
for non-zero integers $A_1, \ldots, A_4$.

\subsection{Applying the Poisson summation formula}
Letting $Q=X$ and breaking up the sum in ~\eqref{eq:1} into residue classes modulo $q$ we get
\begin{equation*}
\begin{split}
N(\lambda;X) = c_QQ^{-2}\sum_{q=1}^{\infty}\,&\sideset{}{^*}\sum_{a \mod{q}}\sum_{\bs{b} \mod{q}}e_q(aF(\bs{b})) \\ &\times \sum_{\bs{x} \equiv \bs{b} \mod{q}}\lambda(x_1)w\left(\frac{\bs{x}}{X}\right)h\left(\frac{q}{Q},\frac{F(\bs{x})}{Q^2}\right).
\end{split}
\end{equation*}
Applying Lemma ~\ref{poisson1} in the $x_2,x_3$ and $x_4$ variables we get that
\begin{equation}\label{eq:prevoronoi}
\begin{split}
N(\lambda;X) = c_QX\sum_{q=1}^{\infty}q^{-3}&\sum_{\bs{c}' \in \mathbf{Z}^3}\hspace{0.2cm}\sideset{}{^*}\sum_{a \mod{q}}\sum_{\bs{b} \mod{q}}e_q(aF(\bs{b})+\bs{b}'.\bs{c}') \\
&\times\sum_{c_1 \equiv b_1 \mod{q}}\lambda(c_1)I_q(\bs{c}),
\end{split}
\end{equation}
where if $r=q/X$,
\begin{equation*}
I_q(\bs{c}) = \int_{\mathbf{R}^3} w(c_1/X,\bs{z})h(r,F(c_1/X,\bs{z}))e_{r}(-\bs{c}'.\bs{z})\, d\bs{z}.
\end{equation*}
By properties of the $h$-function we see that $q \ll X$, or equivalently, $r \ll 1$. 

Set $\bs{u}' = r^{-1}\bs{c}'$, 
\begin{equation*}
F_q(b_1,s) = \sum_{n \equiv b_1 \mod{q}}\frac{\lambda(n)}{n^s},
\end{equation*}
and
\begin{equation}\label{eq:iqcs}
I_q(\bs{c}',s) = \int_{\mathbf{R}^+\times \mathbf{R}^3} w(\bs{x})h(r,F(\bs{x}))e(-\bs{u}'.\bs{x}')x_1^{s-1}\, d\bs{x}.
\end{equation}
For $\frac{1}{2} \leq \sigma \leq 2$, integrating by parts we see that
\begin{equation}\label{eq:iqhparts}
\begin{split}
I_q(\bs{c}',s) &\ll |s|^{-N} \left|\int \frac{\partial^N}{\partial x_1^N}\left\{w(\bs{x})h(r,F(\bs{x}))\right\}x_1^{s+N-1}e(-\bs{u}'.\bs{x}')d\bs{x}\right| \\
&\ll_N r^{-1-N}|s|^{-N},
\end{split}
\end{equation}
by ~\eqref{eq:derh}. For $\sigma > 1$, we have $F_q(b_1,s) \ll 1$, as the Dirichlet series converges absolutely in this region. By the Mellin inversion theorem, we therefore have
\begin{equation*}
\begin{split}
N(\lambda;X) = c_QX\sum_{q \ll X}q^{-3}&\sum_{\bs{c}' \in \mathbf{Z}^3}\hspace{0.2cm}\sideset{}{^*}\sum_{a \mod{q}}\sum_{\bs{b} \mod{q}}e_q(aF(\bs{b})+\bs{b}'.\bs{c}') \\ &\times\frac{1}{2\pi i}\int_{(\sigma)}X^s F_q(b_1,s) I_q(\bs{c}',s)\, ds,
\end{split}
\end{equation*}
whenever $\sigma > 1$.

We end this section by recording an alternate expression for $N(\lambda;X)$. Applying Lemma ~\ref{progr} to the $c_1$ variable in ~\eqref{eq:prevoronoi} we see that
\begin{equation}\label{eq:postvoronoi}
N(\lambda;X) = c_QX^{2}\sum_{q\ll X}q^{-4}\sum_{\substack{\bs{c}  \in \mathbf{Z}^4\\c_1 \geq 1}}\lambda(c_1) \sum_{d \mid q}S_{d,q}(\bs{c})I_{d,q}(\bs{c}),
\end{equation}
where
\begin{equation}\label{eq:s2}
\begin{split}
S_{d,q}(\bs{c}) &= \sideset{}{^{*}}\sum_{a \mod{q}}\sum_{\bs{b} \mod{q}} e_q(aF(\bs{b})+\bs{b'}.\bs{c}')S(b_1,c_1;d)
\end{split}
\end{equation}
and
\begin{equation}\label{eq:i2}
\begin{split}
I_{d,q}(\bs{c}) &= \frac{2\pi i^{k}}{d} \int_{\mathbf{R}^+\times\mathbf{R}^3} w\left(\bs{x}\right) h\left(r,F(\bs{x})\right) J_{k-1}\left(\frac{4\pi}{d}\sqrt{c_1Xx_1}\right) e\left(-\bs{u}'.\bs{x'}\right) \, d\bs{x}.
\end{split}
\end{equation}

\section{Integral estimates}
\subsection{First steps}
Let
\begin{equation}\label{eq:testtest}
w_0(x) = \begin{cases} \exp(-(1-x^2)^{-1}), &\mbox{$|x| < 1$} \\
0 &\mbox{$|x| \geq 1$},
\end{cases}
\end{equation}
be a smooth function with compact support and let $$\gamma(x) = w_0\left(\frac{x}{100\max_{i=1,2,3,4}|A_i|}\right).$$ Then $\gamma(F(\bs{x})) \gg 1$ whenever $\bs{x} \in \supp(w)$. Recall that $r=q/X$ and let \begin{equation}\label{eq:g}g(r,y) = h(r,y)\gamma(y).\end{equation} 
Then for each $r$, the function $g(r,y)$ has compact support, and by~\cite[Lemma 17]{HB} we have the following bound for its Fourier transform,
\begin{equation}\label{eq:prt}
\begin{split}
p_{r}(t) &= p(t) = \int_{\mathbf{R}} g(r,y)e(-ty) \, dy \ll_j (r|t|)^{-j}.
\end{split}
\end{equation}
\begin{remark}
The above bound shows that $p(t)$ has polynomial decay unless $|t| \ll r^{-1-o(1)}.$
\end{remark}

We also record a certain dissection argument due to Heath-Brown~\cite[Lemma~2]{HB}. Let $w_0$ be as in~\eqref{eq:testtest}, and let $c_0 = \textstyle\int_{\mathbf{R}} w_0(x) \,dx.$ For $\bs{u},\bs{v} \in \mathbf{R}^3$ define
\begin{equation*}\label{eq:oct9}
w_{\delta}(x_1, \bs{u},\bs{v}) = c_0^{-3}w_0^{(3)}(\bs{u})w(x_1,\delta \bs{u} + \bs{v}),
\end{equation*}
where $$w_0^{(3)}(\bs{u}) = \prod_{i=1}^3 w_0(u_i).$$ Then 
\begin{equation}\label{eq:dissection}
\int_{\mathbf{R}^3} w_{\delta}\left(x_1,\frac{\bs{x}'-\bs{y}'}{\delta},\bs{y}'\right) \, d\bs{y}' = \delta^3 w(\bs{x}). 
\end{equation}
Finally, we remind the reader that the test function $w$ is supported in $[1/2,2]^4$, a fact we will repeatedly make use of. It is crucial to our arguments that $w$ is supported away from the origin.
\subsection{Estimates for $I_q(\bs{c})$}
Recall that
\begin{equation*}
I_q(\bs{c}) = \int_{\mathbf{R}^3} w(c_1/X,\bs{z})h(r,F(c_1/X,\bs{z}))e(-\bs{u}'.\bs{z})\, d\bs{z}.
\end{equation*}
We have the following estimates.
\begin{lemma}\label{iqctrivial}
$I_q(\bs{c}) \ll 1.$
\end{lemma}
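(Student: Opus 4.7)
The plan is to bound the integrand in absolute value and reduce the estimate to a one-dimensional $L^1$-bound on $h(r,\cdot)$. Since $w$ is smooth with support in $[1/2,2]^4$ and $|e_r(-\bs{c}' \cdot \bs{z})| = 1$, this immediately gives
$$|I_q(\bs{c})| \ll \int_{[1/2, 2]^3} |h(r, F(c_1/X, \bs{z}))| \, d\bs{z}.$$

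The next step is a change of variables from $z_2$ to $t = F(c_1/X, \bs{z}) = A_1(c_1/X)^2 + A_2 z_2^2 + A_3 z_3^2 + A_4 z_4^2$. Since $\partial F/\partial z_2 = 2 A_2 z_2$ is bounded below by $|A_2|$ on $z_2 \in [1/2, 2]$, this is a diffeomorphism onto its image with Jacobian bounded above and below, so the bound becomes
$$|I_q(\bs{c})| \ll \int_{[1/2, 2]^2} dz_3\, dz_4 \int_{\mathbf{R}} |h(r, t)| \, dt.$$

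It remains to verify that $\int_{\mathbf{R}} |h(r, t)| \, dt \ll 1$ uniformly in $r \leq 1$. Taking $a = b = 0$ and $N = 2$ in ~\eqref{eq:derh} gives $|h(r, t)| \ll r^{-1}\bigl(r^2 + \min(1, r/|t|)^2\bigr)$. Splitting at $|t| = r$, the region $|t| \leq r$ contributes $O(r^{-1} \cdot r) = O(1)$; the region $r \leq |t| \ll 1$ contributes $O\bigl(r \int_r^{\infty} t^{-2} \, dt\bigr) = O(1)$; and the region $|t| \gg 1$ is negligible by the effective support constraints of Heath-Brown's construction (or by increasing $N$). Since the $(z_3, z_4)$-integral is over a set of bounded measure, the bound $I_q(\bs{c}) \ll 1$ follows. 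This is the \emph{trivial} estimate: it makes no use of oscillation from $e_r(-\bs{c}' \cdot \bs{z})$, and no serious obstacle arises---the only point worth noting is that the pointwise blowup $|h(r,\cdot)| \asymp r^{-1}$ is exactly compensated by concentration on an interval of length $O(r)$, so the $L^1$-mass of $h(r,\cdot)$ is uniformly bounded.
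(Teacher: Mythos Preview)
Your argument is correct and is essentially the same as the paper's: the paper simply cites \cite[Lemma~15]{HB}, whose proof is precisely the $L^1$-bound on $h(r,\cdot)$ combined with the observation that the set $\{\bs{z}:|F(c_1/X,\bs{z})|\leq\nu\}$ has measure $O(\nu)$---your change of variables $z_2\mapsto t$ makes this explicit. One small point: in the region $r\leq |t|\ll 1$ your bound $h(r,t)\ll r^{-1}(r^2+(r/|t|)^2)=r+r t^{-2}$ has an $r$ term you did not write down, but its integral over a bounded interval is $O(r)=O(1)$ anyway, so nothing is lost.
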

\begin{proof}
This follows from~\cite[Lemma 15]{HB}. 
\end{proof}
\begin{lemma}\label{lemmatruncatec'}
Let $N\geq 0$ and suppose that $\bs{c}' \neq \bs{0}$. Then
\begin{equation*}
I_q(\bs{c}) \ll_{N} \frac{X}{q}|\bs{c}'|^{-N}.
\end{equation*}
\end{lemma}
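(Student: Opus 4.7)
The plan is to exploit the oscillation of $e(-\bs{u}'\cdot\bs{z})$ in the integrand of $I_q(\bs{c})$. Since $\bs{u}'=\bs{c}'/r$ with $r=q/X\ll 1$, the phase is large whenever $\bs{c}'\neq\0$: specifically $|\bs{u}'|\asymp X|\bs{c}'|/q$. So I would repeatedly integrate by parts with respect to whichever variable $z_j$ ($j\in\{2,3,4\}$) realizes $|c_j'|=\max_i|c_i'|\gg|\bs{c}'|$. Each integration by parts contributes a factor $(2\pi iu_j')^{-1}=O(r/|\bs{c}'|)$ and differentiates the amplitude $A(\bs{z})=w(c_1/X,\bs{z})\,h(r,F(c_1/X,\bs{z}))$ once in $z_j$.

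The key technical step is therefore bounding $\partial_{z_j}^N A$ uniformly in $\bs{z}\in\supp w(c_1/X,\cdot)$. The weight $w$ and its derivatives are $O(1)$ and have bounded support, so by Leibniz it suffices to bound $\partial_{z_j}^k h(r,F(c_1/X,\bs{z}))$ for $0\leq k\leq N$. Applying Fa\`a di Bruno's formula to the chain rule and using that $F$ is quadratic in $z_j$ (so $\partial_{z_j}F=2A_jz_j=O(1)$, $\partial_{z_j}^2F=2A_j=O(1)$, and higher derivatives vanish), one obtains a sum of terms of the shape $(\partial_y^m h)(r,F)\cdot(\text{polynomial in }z_j,A_j)$ with $m\leq k$ and the polynomial factors $O(1)$ on the support of $w$. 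The bound~\eqref{eq:derh} then gives $|\partial_y^m h(r,y)|\ll_m r^{-1-m}$, whence $|\partial_{z_j}^k h(r,F)|\ll_k r^{-1-k}$ and consequently $\|\partial_{z_j}^N A\|_\infty\ll_N r^{-1-N}$.

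Combining with the phase gain from $N$ integrations by parts,
\begin{equation*}
I_q(\bs{c})\ll_N |u_j'|^{-N}\,\|\partial_{z_j}^N A\|_\infty\cdot\meas(\supp A)
\ll_N \Bigl(\frac{r}{|c_j'|}\Bigr)^N\cdot r^{-1-N}
=r^{-1}|c_j'|^{-N}\ll_N \frac{X}{q}\,|\bs{c}'|^{-N},
\end{equation*}
as desired.

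The main obstacle I anticipate is purely bookkeeping: ensuring that the second factor $x^N+\min(1,x/|y|)^N$ in~\eqref{eq:derh} does not interfere with the naive bound $\partial_y^m h\ll r^{-1-m}$. In the regime $|y|=|F|\ll 1$ (forced by the support of $w$) this factor is $O(1)$, which is the worst case and is exactly what feeds into the bound above; so no genuine difficulty arises, but the chain-rule expansion must be written down carefully to confirm this.
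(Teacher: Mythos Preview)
Your proposal is correct: repeated integration by parts in the variable $z_j$ realising $|c_j'|=\max_i|c_i'|$, combined with the derivative bound~\eqref{eq:derh} and the chain rule for the quadratic $F$, yields exactly the stated estimate. The paper's own proof simply cites \cite[Lemma~19]{HB}, whose proof is precisely this integration-by-parts argument (in slightly greater generality), so your approach is essentially the same as the one underlying the citation.
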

\begin{proof}
This follows from~\cite[Lemma 19]{HB}.
\end{proof}
As a consequence of Lemma~\ref{lemmatruncatec'}, we find that $I_q(\bs{c}) \ll_A X^{-A}$ if $|\bs{c}'| \gg X^{\ve}$. It remains to examine the behaviour of $I_q(\bs{c})$ when $|\bs{c}'| \ll X^{\ve}$.
\begin{lemma}\label{lemmastationaryphase}
Let $\ve > 0$. Suppose that $1 \leq |\bs{c}'| \ll X^{\ve}$. Then, for $j = 0,1$ we have
\begin{equation*}
\frac{\partial^j}{\partial c_1^j}I_q(\bs{c}) \ll_{\ve} (r^{-1}|\bs{u}'|)^{\ve}\left(r^{-j}\left(\frac{c_1}{X^2}\right)^{j} + \frac{j}{X}\right)|\bs{u}'|^{-\frac{1}{2}}.
\end{equation*}
\end{lemma}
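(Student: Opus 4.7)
The plan is to analyse $I_q(\bs{c})$ as an oscillatory integral in $\bs{z}$ by combining a Fourier representation of $h(r,F)$ with three-dimensional stationary phase. Since $\gamma(F(\bs{x}))\gg 1$ on $\supp(w)$, the function $\tilde w(\bs{x}) := w(\bs{x})/\gamma(F(\bs{x}))$ is smooth with compact support and $w(\bs{x}) h(r,F(\bs{x})) = \tilde w(\bs{x}) g(r,F(\bs{x}))$ with $g = h\gamma$ as in \eqref{eq:g}. Because $g(r,\cdot)$ is compactly supported, Fourier inversion gives $g(r,F(\bs{x})) = \int_\RR p_r(t) e(tF(\bs{x}))\, dt$, and swapping the order of integration yields
$$I_q(\bs{c}) = \int_\RR p_r(t)\, e(tA_1(c_1/X)^2)\, J(t)\, dt, \qquad J(t) = \int_{\RR^3} \tilde w(c_1/X,\bs{z})\, e\bigl(tQ(\bs{z}) - \bs{u}'\cdot\bs{z}\bigr)\, d\bs{z},$$
where $Q(\bs{z}) = A_2 z_2^2 + A_3 z_3^2 + A_4 z_4^2$. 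The phase $\Phi_t(\bs{z}) = tQ(\bs{z}) - \bs{u}'\cdot\bs{z}$ has unique critical point $\bs{z}^*(t) = (u_i'/(2tA_i))_i$, of size $\asymp |\bs{u}'|/|t|$.

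The main analytic step is to bound $J(t)$. For $|t| \leq c|\bs{u}'|$ with $c$ a small absolute constant, $\bs{z}^*(t)$ lies outside $\supp\tilde w(c_1/X,\cdot)$ and $|\nabla\Phi_t(\bs{z})| \gg |\bs{u}'|$ on the support, so repeated integration by parts yields $|J(t)| \ll_N |\bs{u}'|^{-N}$. For $|t| \geq c|\bs{u}'|$, stationary phase in three variables with Hessian $2t\cdot\mathrm{diag}(A_2,A_3,A_4)$ gives $|J(t)| \ll |t|^{-3/2}$. Meanwhile, \eqref{eq:prt} truncates $|t|$ to $|t| \ll r^{-1}(r^{-1}|\bs{u}'|)^\ve$ with negligible error. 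On the remaining range one uses $|p_r| \ll 1$ and integrates $|t|^{-3/2}$ from $c|\bs{u}'|$, producing
$$|I_q(\bs{c})| \ll (r^{-1}|\bs{u}'|)^\ve |\bs{u}'|^{-1/2},$$
which is the $j=0$ bound.

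For $j = 1$, differentiating under the integral yields two contributions. If $\partial_{c_1}$ hits $\tilde w$, one picks up a factor $X^{-1}$ and the previous argument applies verbatim. If $\partial_{c_1}$ hits $g(r,F)$, the Fourier representation gives $\partial_{c_1}g(r,F) = (4\pi i A_1 c_1/X^2)\int t\, p_r(t)\, e(tF)\, dt$. Setting $\tilde p_r(t) := tp_r(t)$, which up to a constant is the Fourier transform of $\partial_y g(r,\cdot)$, \eqref{eq:derh} gives $\|\partial_y g(r,\cdot)\|_1 \ll r^{-1+\ve}$, so $|\tilde p_r(t)| \ll r^{-1+\ve}$ with analogous rapid decay for $|t| \gg r^{-1+\ve}$. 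Rerunning the argument with $\tilde p_r$ in place of $p_r$ produces the required extra factor $r^{-1}(c_1/X^2)$.

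The central obstacle is the transition regime $|t| \asymp |\bs{u}'|$, where $\bs{z}^*(t)$ sits near the boundary of $\supp\tilde w$ and the naive stationary-phase asymptotic is delicate. I would handle this by invoking the dissection identity \eqref{eq:dissection} to localise the integrand to balls of radius $\delta$ in the $\bs{z}$ variables: on each piece the critical point is either well inside or well outside the localised support, so one of the two regimes above applies cleanly, and reassembling via the integral over $\bs{y}'$ costs at most an $\ve$-power in the final estimate.
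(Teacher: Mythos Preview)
Your approach is essentially the same as the paper's: Fourier inversion on $g(r,\cdot)$, the two-regime estimate for the $\bs z$-integral (rapid decay in $|\bs u'|$ when $|t|\ll|\bs u'|$, and $|t|^{-3/2}$ from stationary phase when $|t|\gg|\bs u'|$), and then integration in $t$ against $p_r$ or $tp_r$. The paper cites \cite[Lemma~3.1]{HBP} for the two bounds on $J(t)$ and supplements the argument with a direct estimate based on \eqref{eq:derh} and \cite[Lemma~15]{HB} to cover the range where $|\bs u'|$ is small; your treatment via $\|\partial_y g(r,\cdot)\|_1\ll r^{-1}$ to control $\widetilde p_r=tp_r$ is the same idea packaged slightly differently and is fine.

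Your final paragraph, however, is a detour you do not need. The stationary-phase \emph{upper bound} $J(t)\ll |t|^{-3/2}$ holds uniformly in $t$ once the Hessian of $\Phi_t$ is nondegenerate of size $\asymp|t|$; whether $\bs z^*(t)$ is inside, outside, or on the boundary of $\supp\tilde w$ is irrelevant for the bound (it only matters if one wants an asymptotic with a main term). This is precisely what \cite[Lemma~3.1]{HBP} records, and the paper uses it without further ado. The dissection identity \eqref{eq:dissection} is deployed in the paper not here but in the proof of Lemma~\ref{csmall}, where one must remove a genuine factor of $r^{-1}$ from the bound for $I_2$; importing it into the present lemma would work but is unnecessary.
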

\begin{proof}
Since $r \ll 1$, we have $|\bs{u}'| \gg 1$ under the hypotheses of the lemma. By~\eqref{eq:prt} we have
\begin{equation*}
I_q(\bs{c}) = \int_{\mathbf{R}} p(t) \int _{\mathbf{R}^3}\widetilde{w}(c_1/X,\bs{z})e(tF(c_1/X,\bs{z})-\bs{u}'.\bs{z})\, d\bs{z} \, dt,
\end{equation*}
where $$\widetilde{w}(c_1/X,\bs{z}) = \frac{w(c_1/X,\bs{z})}{\gamma(F(c_1/X,\bs{z}))}.$$
For $j \in \left\{0,1\right\}$, 
\begin{equation*}
\begin{split}
\frac{\partial^j}{\partial c_1^j}I_q(\bs{c}) &= \left(\frac{4\pi iA_1c_1}{X^2}\right)^j\int t^j p(t) \int \widetilde{w}(c_1/X,\bs{z})e(tF(c_1/X,\bs{z})-\bs{u}'.\bs{z}) \, d\bs{z} \, dt \\
&\quad \quad + \frac{j}{X}\int p(t)\int \widetilde{w}^{(1,\bs{0})}(c_1/X,\bs{z})e(tF(c_1/X,\bs{z})-\bs{u}'.\bs{z})\,d\bs{z} \, dt.
\end{split}
\end{equation*}
Denote the integrals over $\bs{z}$ by $I_1(t)$ and $I_2(t)$ respectively. Using~\cite[Lemma 3.1]{HBP} we have the following bounds for $I_k(t)$: 
\begin{equation*}
I_k(t) \ll \min\left(1,|t|^{-\frac{3}{2}}\right),
\end{equation*}
and if $|\bs{u}'| \gg |t|$ then
\begin{equation*}
I_k(t) \ll_{N} |\bs{u}'|^{-N},
\end{equation*}
for $k = 1,2.$ By~\eqref{eq:prt} we have the bounds
\begin{equation*}
\int_{|t| \ll |\bs{u}'|} |t|^j|p(t)| \ll |\bs{u}'|^{1+j},
\end{equation*}
and 
\begin{equation*}
\int_{|t| \gg |\bs{u}'|} |p(t)||t|^{j-\frac{3}{2}} \,dt \ll_j r^{-j}|\bs{u}'|^{-\frac{1}{2}}. 
\end{equation*}
As a result,
\begin{equation}\label{eq:iqpartial1}
\frac{\partial^j}{\partial c_1^j}I_q(\bs{c}) \ll_N \left(\frac{c_1}{X^2}\right)^j\left(|\bs{u}'|^{1+j-N}+r^{-j}|\bs{u}'|^{-\frac{1}{2}}\right) + \frac{j}{X}\left(|\bs{u}'|^{1-N} + |\bs{u}'|^{-\frac{1}{2}}\right).
\end{equation}
We will see that this is satisfactory for the lemma unless $|\bs{c}'|$ is essentially $O(1)$. If this is the case, we proceed as follows. 

For $j  \in \{0,1\}$ we have
\begin{equation*}
\begin{split}
\frac{\partial^j I_q(\bs{c})}{\partial c_1^j} &= \left(\frac{2A_1c_1}{X^2}\right)^j\int_{\mathbf{R}^3}w(c_1/X,\bs{z})h^{(0,j)}(r,F(c_1/X,\bs{z}))e(-\bs{u}'.\bs{z}') \, d\bs{z} \, \\ 
&\quad \quad + \frac{j}{X} \int w^{(j,\bs{0})}(c_1/X,\bs{z})h(r,F(c_1/X,\bs{z})e(-\bs{u}'.\bs{z})\, d\bs{z} \\
\end{split}
\end{equation*}
Applying~\eqref{eq:derh} with $N=2+j$ in the first integral, and with $N=2$ in the second, we have 
\begin{equation*}
\begin{split}
\frac{\partial^j I_q(\bs{c})}{\partial c_1^j} & \ll \left(\frac{c_1}{X^2}\right)^j \int |w(c_1/X,\bs{z})|r^{-1-j}\left\{\delta(j)r^{2+j}+\min \left(1,\frac{r^{2+j}}{F(c_1/X,\bs{z})^{2+j}}\right) \right\}\, d\bs{z}\,   \\
&\quad \quad + \frac{j}{X} \int |w^{(j,\bs{0})}(c_1/X,\bs{z})|r^{-1}\left\{r^2+\min \left(1,\frac{r^2}{F(c_1/X,\bs{z})^2}\right) \right\}\, d\bs{z}. \\
\end{split}
\end{equation*}
Observe that the measure of the set of $\bs{z}$ for which $|F(\frac{c_1}{X},\bs{z})| \ll \nu$ is $O(\nu)$. Moreover, there exists $C > 0$ such that $F(c_1/X,\bs{z}) \ll C$ in the support of $w$. As a result, we find that
\begin{equation*}
\begin{split}
I_q(\bs{c}) &\ll r+r^{-1} \int_{|F(c_1/X,\bs{z})| \leq r} |w(c_1/X,\bs{z})| \, d\bs{z}+ r\int_{|F(c_1/X,\bs{z})|\geq r}\frac{|w(c_1/X,\bs{z})|}{F(c_1/X,\bs{z})^2} \, d\bs{z}\\
&\ll 1 + r \sum_{u=0}^{\log_2 C/r}\int_{2^{u}r \leq |F(c_1/X,\bs{z})| \leq 2^{u+1}r} \frac{|w(c_1/X,\bs{z})|}{4^ur^2}\, d\bs{z} \\
&\ll 1 + 2r\sum_{u=0}^{\log_2 C/r} \frac{1}{2^u r} \ll 1.
\end{split}
\end{equation*} Similarly,
\begin{equation*}
\begin{split}
\frac{\partial I_q(\bs{c})}{\partial c_1} &\ll  \left(\frac{r^{-2}c_1}{X^2}\right)\left\{\int_{|F(c_1/X,\bs{z})| \leq r} |w(c_1/X,\bs{z})| \, d\bs{z} + r^3\int_{|F(c_1/X,\bs{z})|\geq r}\frac{|w(c_1/X,\bs{z})|}{|F(c_1/X,\bs{z})|^3} \, d\bs{z}\right\} \\
&+ \frac{r^{-1}}{X}\left\{ \int_{|F(c_1/X,\bs{z})| \leq r} |w^{(1,\bs{0})}(c_1/X,\bs{z})| \, d\bs{z} + r^2\int_{|F(c_1/X,\bs{z})|\geq r}\frac{|w^{(1,\bs{0})}(c_1/X,\bs{z})|}{F(c_1/X,\bs{z})^2} \, d\bs{z} \right\}\\
&\ll r^{-1}\left(\frac{c_1}{X^2}\right) + \frac{1}{X}.
\end{split}
\end{equation*}
To sum up, for $j =0,1$ we have shown
\begin{equation}\label{eq:iqcj}\frac{\partial^j}{\partial c_1^j}I_q(\bs{c}) \ll r^{-j}\left(\frac{c_1}{X^2}\right)^{j} + \frac{j}{X}.
\end{equation} 

We are now in place to finish the proof of the lemma. Suppose first that $|\bs{u}'| \ll r^{-2\ve/3}$, then $|\bs{u}'|^{\frac{1}{2}-\ve} \ll r^{-\ve}$. In this case, $$\frac{\partial^j}{\partial c_1^j}I_q(\bs{c}) \ll r^{-j}\left(\frac{2c_1}{X^2}\right)^{j} + \frac{j}{X} \ll (r^{-1}|\bs{u}'|)^{\ve}\left(r^{-j}\left(\frac{2c_1}{X^2}\right)^{j} + \frac{j}{X}\right)|\bs{u}'|^{-\frac{1}{2}}.$$ Suppose next that $|\bs{u}'| \gg r^{-\frac{2\ve}{3}}$ then choosing $N$ large enough in~\eqref{eq:iqpartial1} we get $$\frac{\partial^j}{\partial c_1^j}I_q(\bs{c}) \ll \left(r^{-j}\left(\frac{c_1}{X^2}\right)^{j} + \frac{j}{X}\right)|\bs{u}'|^{-\frac{1}{2}}.$$ This completes the proof of the lemma.\end{proof}
\begin{remark}
The reader should compare the preceding result to~\cite[Lemma 22]{HB}.
\end{remark}
\subsection{Estimates for $I_q(\bs{c}',s)$ and $I_{d,q}(\bs{c})$}
In this section we will denote the complex variable $s = \sigma + it$, where $\sigma$ and $t$ are real. Recall the integrals in~\eqref{eq:iqcs},~\eqref{eq:i2}, 
\begin{equation*}
I_q(\bs{c}',s) = \int_{\mathbf{R}^+\times \mathbf{R}^3} w(\bs{x})h(r,F(\bs{x}))e(-\bs{u}'.\bs{x}')x_1^{s-1}\, d\bs{x}
\end{equation*}  
and 
\begin{equation*}
\begin{split}
I_{d,q}(\bs{c}) &= \frac{2\pi i^{k}}{d} \int_{\mathbf{R}^+\times\mathbf{R}^3} w\left(\bs{x}\right) h\left(r,F(\bs{x})\right) J_{k-1}\left(\frac{4\pi}{d}\sqrt{c_1Xx_1}\right) e\left(-\bs{u}'.\bs{x'}\right) \, d\bs{x}.
\end{split}
\end{equation*}
\subsubsection{First estimates}
The following `trivial' bounds follow from~\cite[Lemma 15]{HB} and the bound~\eqref{eq:boundforjbessel}. Our task for the rest of the section will be to improve upon them. 
\begin{lemma}\label{iqcstrivialbound}
Let $\frac{1}{2} \leq \sigma \leq 2$. We have $$I_q(\bs{c}',s) \ll 1$$
and
$$I_{d,q}(\bs{c}) \ll \frac{1}{d}\left(1+\frac{\sqrt{c_1X}}{d}\right)^{-\frac{1}{2}}.$$  
\end{lemma}
Integrating by parts, next we will give estimates for $I_q(\bs{c}',s)$ and $I_{d,q}(\bs{c})$ in the spirit of Lemma~\ref{lemmatruncatec'}. 
\begin{lemma}\label{lemmaczero}
Let $\frac{1}{2} \leq \sigma \leq 2$. We have
\begin{equation*}
I_q(\bs{0},s) = I_q(s) \ll_A \min\left\{1,|s|^{-A}\right\}.
\end{equation*}
\end{lemma}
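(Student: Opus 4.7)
The plan is to realize $I_q(s)$ as the Mellin transform in $x_1$ of a smooth, compactly supported function whose derivatives are bounded independently of $r = q/X$, and then apply integration by parts. Setting
\begin{equation*}
G(\xi) = \int_{\mathbf{R}^3} w(\xi,\bs{z}) h(r, F(\xi,\bs{z})) \, d\bs{z},
\end{equation*}
we have $I_q(s) = \int_0^\infty G(\xi) \xi^{s-1} \, d\xi$. If we can show $\|G^{(N)}\|_\infty \ll_N 1$ uniformly in $r$, then $N$ integrations by parts in $\xi$ (with vanishing boundary terms, since $\supp G \subset [1/2, 2]$) yield
\begin{equation*}
I_q(s) = \frac{(-1)^N}{s(s+1)\cdots(s+N-1)} \int_0^\infty G^{(N)}(\xi) \xi^{s+N-1} \, d\xi \ll_N |s|^{-N},
\end{equation*}
for all $N \geq 0$, which combined with Lemma~\ref{iqcstrivialbound} gives $I_q(s) \ll_A \min\{1, |s|^{-A}\}$.

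The technical heart of the argument is the uniform control of $G^{(N)}$. The direct chain-rule approach fails, since $\partial_y^k h(r, \cdot) = O(r^{-1-k})$ by \eqref{eq:derh}, so that naive differentiation under the integral blows up as $r \to 0$. To circumvent this, I would apply the coarea formula to write
\begin{equation*}
G(\xi) = \int_{\mathbf{R}} h(r, y) V(\xi, y) \, dy, \quad V(\xi, y) = \int_{\{F(\xi, \cdot) = y\}} \frac{w(\xi, \bs{z})}{|\nabla_{\bs{z}} F(\xi, \bs{z})|} \, dS(\bs{z}).
\end{equation*}
Because $\nabla_{\bs{z}} F = (2A_2 x_2, 2A_3 x_3, 2A_4 x_4)$ is nonvanishing on $\supp w \subset [1/2, 2]^4$, the map $\bs{z} \mapsto F(\xi, \bs{z})$ is a submersion there, and this, together with the smoothness of $w$, forces $V$ to be smooth in $(\xi, y)$ and compactly supported in both variables. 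Since $h$ does not depend on $\xi$, differentiating under the integral gives $G^{(N)}(\xi) = \int h(r, y) \partial_\xi^N V(\xi, y) \, dy$; applying the delta-approximation property $\int h(r, y) \phi(y) \, dy = \phi(0) + O_M(r^M)$ recorded after \eqref{eq:derh} with $\phi = \partial_\xi^N V(\xi, \cdot)$ then yields
\begin{equation*}
G^{(N)}(\xi) = \partial_\xi^N V(\xi, 0) + O_{N, M}(r^M),
\end{equation*}
with implicit constants depending only on Sobolev norms of $V$ (finite by compactness) and not on $r$ or $\xi$. This is exactly the uniform bound we needed.

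The main subtlety is ensuring that $V(\xi, y)$ is genuinely smooth in $y$ near the critical values of $F(\xi, \cdot)|_{\supp w}$; without this, the delta-approximation step would break. But the only critical point of $F(\xi, \cdot)$ is $\bs{z} = \bs{0}$, which lies outside $\supp w$, so every fiber meeting the support is a regular submanifold and $V$ is $C^\infty$ globally, irrespective of the signature of $F$. The argument is thus agnostic to the definite vs. indefinite distinction for $F$.
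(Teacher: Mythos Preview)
Your proof is correct and takes a genuinely different route from the paper's. The paper proceeds by Fourier inversion: it writes $h(r,y)$ via its Fourier transform $p_r$ to obtain
\[
I_q(s)=\int_{\mathbf R}p(u)\int_{\mathbf R^+\times\mathbf R^3}\widetilde w(\bs x)\,e\!\left(uF(\bs x)+\tfrac{(s-1)\log x_1}{2\pi i}\right)\,d\bs x\,du,
\]
observes that $\nabla F\gg 1$ on $\supp w$ so that the phase has gradient $\gg\max(|u|,|s|)$, and then applies a non-stationary phase bound (Heath-Brown's Lemma~10) together with the decay of $p$ to get $|s|^{-A}$.

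Your argument instead writes $I_q(s)$ as the Mellin transform of $G(\xi)=\int w(\xi,\bs z)h(r,F(\xi,\bs z))\,d\bs z$ and establishes $\|G^{(N)}\|_\infty\ll_N 1$ via the coarea formula and the $\delta$-approximation property of $h$. Both approaches rest on the same geometric fact---that $\nabla_{\bs z}F$ does not vanish on $\supp w$---but exploit it differently: the paper uses it to bound an oscillatory integral, while you use it to guarantee smoothness of the fibre integral $V(\xi,y)$. Your method is arguably more transparent here (it avoids introducing the dual variable $u$ and the appeal to Lemma~10), and it makes explicit use of the $\delta$-like behaviour of $h$ that the paper records but does not invoke at this point. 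The paper's approach, on the other hand, stays within the oscillatory-integral framework used throughout the rest of the argument and generalises more readily to the case $\bs c'\neq\bs 0$, where one genuinely needs stationary phase rather than coarea.
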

\begin{proof}
We write
\begin{equation*}
\begin{split}
I_q(s) &= \int_{\mathbf{R}} p(u) \int_{\mathbf{R}^{+}\times \mathbf{R}^3} \widetilde{w}(\bs{x})e(\Phi(u,\bs{x}))\, d\bs{x} \, du, 
\end{split}
\end{equation*}
with $\Phi(u,\bs{x}) = uF(\bs{x})+\frac{(s-1) \log x_1}{2\pi i}.$ Since $F$ is diagonal and $w$ is supported in the box $[1/2,2]^4$, we see that $|\nabla F(\bs{x})| \gg 1$ in the support of $w$; as a result we find that $|\nabla \Phi| \gg |u|$. Furthermore, if $|u| \ll |s|$ we see that $|\nabla \Phi| \gg |s|$. Therefore, we have by ~\cite[Lemma 10]{HB} and ~\eqref{eq:prt} that for $A \geq 0$
\begin{equation*}
\begin{split}
I_q(s) &\ll_A |s|^{-A}\int_{|u| \ll |s|}|p(u)| \, du + \int_{|u| \gg |s|}|u|^{-A}|p(u)| \, du \\
&\ll_A |s|^{1-A},
\end{split}
\end{equation*}
since for all $j=j_1+\ldots + j_4 \geq 2$ we have $$\left|\frac{\partial^{j}\Phi(u,\bs{x})}{\partial^{j_1} x_1\ldots \partial^{j_4}x_4}\right| \ll_j |s|.$$ This completes the proof.
\end{proof}

By Fourier inversion we may write
\begin{equation}\label{eq:star}
I_q(\bs{c}',s) = \int_{\mathbf{R}} p(\alpha) \int_{\mathbf{R}^+\times \mathbf{R^3}} \widetilde{w}(\bs{x})e(\alpha F(\bs{x}) - \bs{u}'.\bs{x}')x_1^{s-1} \, d\bs{x} \, d\alpha, 
\end{equation}
where \begin{equation}\label{eq:widew}
\widetilde{w}(\bs{x}) = \frac{w(\bs{x})}{\gamma(F(\bs{x}))}.
\end{equation}
Let \begin{equation}\label{eq:psidefinition} \Psi(\bs{x}') = \alpha F(0,\bs{x}') - \bs{u}'.\bs{x}'.\end{equation} Therefore,
\begin{equation*}
I_q(\bs{c}',s) = \int_{\mathbf{R}} p(\alpha) \int_{\mathbf{R}^+} x_1^{s-1}e(\alpha A_1x_1^2)\int_{\mathbf{R}^3} \widetilde{w}(\bs{x})e(\Psi(\bs{x}')) \, d\bs{x}' \, dx_1 \, d\alpha.
\end{equation*}
If $|\alpha| \ll |\bs{u}'|$, then $|\nabla \Psi(\bs{x}')| \gg |\bs{u}'|,$ and as a result, the integral over $\bs{x}'$ is $O(|\bs{u}'|^{-N})$ by~\cite[Lemma 10]{HB}. Since the integral over $\bs{x}'$ is trivially $O(1)$, we have the bound
\begin{equation*}
I_q(\bs{c}',s) \ll_N |\bs{u}'|^{-N}\int_{|\alpha|\ll |\bs{u}'|}|p(\alpha)| \, d\alpha + \int_{|\alpha| \gg |\bs{u}'|}|p(\alpha)| \, d\alpha.
\end{equation*}
Therefore, by~\eqref{eq:prt} we have established the following result.
\begin{lemma}\label{lemmaperronlarge}
Suppose that $\bs{c}' \neq \bs{0}$ and $\frac{1}{2} \leq \sigma \leq 2$. Then
\begin{equation*}
I_q(\bs{c}', s) \ll \min\left\{1, r^{-1}|\bs{c}'|^{-N}\right\}.
\end{equation*}
As a result, $I_q(\bs{c}',s) \ll_A X^{-A}$ unless $|\bs{c}'| \ll X^{\ve}$.
\end{lemma}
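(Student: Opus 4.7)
The bound $I_q(\bs{c}',s) \ll 1$ is immediate from Lemma \ref{iqcstrivialbound}, so the task is to establish the second bound $r^{-1}|\bs{c}'|^{-N}$ (which is only useful when it is $\le 1$). My plan is to use the Fourier inversion of $h$ via the kernel $p(\alpha)$ to expose a phase in the $\bs{x}'$-integral, and then play off integration by parts in $\bs{x}'$ against the rapid decay of $p(\alpha)$.

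First, I would write
\begin{equation*}
I_q(\bs{c}',s) = \int_{\mathbf{R}} p(\alpha)\int_{\mathbf{R}^+} x_1^{s-1}e(\alpha A_1 x_1^2)\int_{\mathbf{R}^3}\widetilde{w}(\bs{x})e(\Psi(\bs{x}'))\,d\bs{x}'\,dx_1\,d\alpha,
\end{equation*}
with $\Psi(\bs{x}')=\alpha F(0,\bs{x}')-\bs{u}'\cdot\bs{x}'$ and $\widetilde{w}$ as in \eqref{eq:widew}. Since $F$ is diagonal and $w$ is supported in $[1/2,2]^4$, we have $\nabla_{\bs{x}'}F(0,\bs{x}') \ll 1$ on the support of $\widetilde{w}$. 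Splitting the $\alpha$-integral at $|\alpha|\asymp|\bs{u}'|$, in the range $|\alpha|\ll|\bs{u}'|$ the term $\bs{u}'\cdot\bs{x}'$ dominates, so $\nabla_{\bs{x}'}\Psi\gg|\bs{u}'|$, and all higher derivatives are $O(|\alpha|+1)\ll|\bs{u}'|$. Then \cite[Lemma 10]{HB} applied to the $\bs{x}'$-integral gives a bound of $O(|\bs{u}'|^{-N})$, and using $\int|p(\alpha)|\,d\alpha\ll 1$ from \eqref{eq:prt} produces the contribution $O(|\bs{u}'|^{-N})$ after trivially bounding the $x_1$-integral.

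For the complementary range $|\alpha|\gg|\bs{u}'|$, I would bound the inner integrals trivially by $O(1)$ and exploit the decay $p(\alpha)\ll_j(r|\alpha|)^{-j}$ from \eqref{eq:prt}, yielding
\begin{equation*}
\int_{|\alpha|\gg|\bs{u}'|}|p(\alpha)|\,d\alpha \ll_j r^{-j}|\bs{u}'|^{-j+1}.
\end{equation*}
Choosing $j$ sufficiently large (and noting $r\ll 1$ so $|\bs{u}'|\gg|\bs{c}'|$), one obtains an arbitrary polynomial saving in $|\bs{u}'|^{-1}$ with a single factor of $r^{-1}$ left over. Combining the two ranges and substituting $|\bs{u}'|=|\bs{c}'|/r$ gives
\begin{equation*}
I_q(\bs{c}',s)\ll_N |\bs{u}'|^{-N}+r^{-1}|\bs{u}'|^{-N+1}\ll r^{-1}|\bs{c}'|^{-N+1},
\end{equation*}
which, after renaming $N$, is the required estimate.

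There is no serious obstacle here; the argument is essentially the same stationary-phase/decay split as in Lemma \ref{lemmatruncatec'} for the spatial integral $I_q(\bs{c})$, adapted to carry the additional factor $x_1^{s-1}$. The slightly subtle point is that one must verify that integration by parts in $\bs{x}'$ alone suffices (the $x_1$-direction is the only place where the phase $\alpha A_1 x_1^2$ could compete with $\bs{u}'\cdot\bs{x}'$, but since $\bs{c}'\ne\bs{0}$ the decay in $\bs{u}'$ is driven purely by the $\bs{x}'$-integral), and that derivatives of $x_1^{s-1}$ in the $\bs{x}'$-variables do not appear, so the range $\tfrac12\le\sigma\le2$ enters only trivially through $|x_1^{s-1}|\ll 1$.
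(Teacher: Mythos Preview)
Your argument is essentially identical to the paper's: the same Fourier inversion via $p(\alpha)$, the same split at $|\alpha|\asymp|\bs{u}'|$, non-stationary phase (\cite[Lemma~10]{HB}) in the $\bs{x}'$-integral for small $\alpha$, and the decay of $p$ from \eqref{eq:prt} for large $\alpha$. One small slip: \eqref{eq:prt} does not give $\int|p(\alpha)|\,d\alpha\ll 1$ (only $\ll r^{-1}$, or $\ll|\bs{u}'|$ over the restricted range), but this is harmless since the extra factor is absorbed by decreasing $N$ by one, exactly as you do at the end.
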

 
Finally, we turn to $I_{d,q}(\bs{c})$. 
\begin{lemma}\label{truncatem1}
For all $N \geq 0$ we have 
\begin{equation*}
I_{d,q}(\bs{c}) \ll_{N} (dr)^{-1}\left(1+\frac{\sqrt{c_1X}}{d}\right)^{-\frac{1}{2}}\min\left\{|\bs{c}'|^{-N}, \left(\frac{X}{q}\frac{d}{\sqrt{c_1X}}\right)^{N-1}\right\}
\end{equation*}
As a result, $I_{d,q}(\bs{c}) \ll_A X^{-A}$ whenever $c_1 \gg X^{1+\ve}/(q/d)^2$, or $|\bs{c}'| \gg X^{\ve}$.  
\end{lemma}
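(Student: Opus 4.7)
The plan is to combine two estimates: the rapid decay of Bessel integrals from Lemma~\ref{trunc}, which handles the $x_1$ integration, and integration by parts in the $\bs{x}'$-variables against the oscillatory factor $e(-\bs{u}'\cdot\bs{x}')$, which supplies decay in $|\bs{c}'|$.

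First I would fix $\bs{x}'$ and view $\psi_{\bs{x}'}(x_1) := w(\bs{x}) h(r,F(\bs{x}))$ as a smooth function of $x_1$ supported in $[1/2,2]$. Iterated chain rule applied to $h(r,F(\bs{x}))$, combined with the derivative bound $\partial_y^m h(r,y) \ll r^{-1-m}$ from~\eqref{eq:derh} and the boundedness of $x_1$ on $\supp(w)$, yields $\|\psi_{\bs{x}'}\|_{M,1} \ll_M r^{-1-M}$ uniformly in $\bs{x}'$. Lemma~\ref{trunc} applied with $t = 4\pi\sqrt{c_1 X}/d$ then gives
\[
\int_{\mathbf{R}^+} \psi_{\bs{x}'}(x_1) J_{k-1}(t\sqrt{x_1})\,dx_1 \ll_M r^{-1-M} t^{-M-1/2},
\]
and integrating trivially over the bounded support in $\bs{x}'$ and multiplying by $1/d$ yields
\[
I_{d,q}(\bs{c}) \ll_M (dr)^{-1} \frac{d^{1/2}}{(c_1X)^{1/4}} \left(\frac{Xd}{q\sqrt{c_1X}}\right)^M,
\]
after using $r = q/X$. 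Setting $M = N-1$ recovers the first member of the minimum.

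For the $|\bs{c}'|^{-N}$ decay, I would integrate by parts $N$ times in the $\bs{x}'$-variables prior to applying Lemma~\ref{trunc}. Choosing $j \in \{2,3,4\}$ with $|u_j'| \asymp |\bs{u}'|$, each integration against $e(-u_j' x_j)$ contributes a factor $|u_j'|^{-1}$ and differentiates the amplitude $w(\bs{x}) h(r,F(\bs{x}))$, which by~\eqref{eq:derh} adds at most a factor $r^{-1}$. After $N$ applications the integrand acquires a factor $|\bs{u}'|^{-N} r^{-N}$ in its $L^\infty$-norm, and this equals $|\bs{c}'|^{-N}$ by virtue of $|\bs{u}'| = r^{-1}|\bs{c}'|$. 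The resulting amplitude still has $x_1$-Sobolev norm of order $M$ bounded by $\ll_{N,M} r^{-1-M}$, so applying Lemma~\ref{trunc} again (one may take $M = 0$ here) delivers the second member of the minimum. The stated consequence is then immediate: when $c_1 \gg X^{1+\ve}(d/q)^2$ we have $Xd/(q\sqrt{c_1 X}) \ll X^{-\ve/2}$ and the first branch with $M$ large gives arbitrary power savings; the case $|\bs{c}'| \gg X^\ve$ is handled analogously by the second branch.

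The main technical obstacle is the bookkeeping in the combined estimate: one must confirm that the $N$ derivatives in $\bs{x}'$ do not inflate the $x_1$-Sobolev norm beyond a constant $C_{N,M} r^{-1-M}$, so that Lemma~\ref{trunc} continues to apply with the correct dependence on $r$. This is a consequence of Fa\`a di Bruno applied to $h(r, F(\bs{x}))$, since all mixed partials of $F$ are bounded on $\supp(w)$; however, the implicit constants must be tracked through the iteration to ensure the final bound is uniform in $\bs{c}$ and $q$.
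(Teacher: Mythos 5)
Your proposal is correct and follows essentially the same route as the paper: the first branch of the minimum is obtained exactly as in the text, by applying Lemma~\ref{trunc} in the $x_1$-variable with the Sobolev bound $\|\psi_{\bs{x}'}\|_{M,1}\ll_M r^{-1-M}$ coming from~\eqref{eq:derh}, and the second branch is the content of the cited~\cite[Lemma 19]{HB} (integration by parts in $\bs{x}'$ against $e(-\bs{u}'\cdot\bs{x}')$, costing $(|\bs{u}'|r)^{-1}=|\bs{c}'|^{-1}$ per step) combined with $J_{k-1}(x)\ll(1+x)^{-1/2}$, which you simply write out instead of citing. Your bookkeeping, including the normalisation $M=N-1$ and the polynomial size of the prefactor when deducing the $X^{-A}$ consequence, is consistent with the stated bound.
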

\begin{proof}
The first estimate follows by applying the argument in the proof of~\cite[Lemma 19]{HB} to the test function $w(\bs{x})J_{k-1}\left(4\pi\frac{\sqrt{c_1Xx_1}}{d}\right)$ in~\eqref{eq:i2}, and integrating by parts in the $\bs{x}'$ variable. As a result, by using the bound~\eqref{eq:boundforjbessel}, we get for any integer $N \geq 0$ that $$I_{d,q}(\bs{c}) \ll (dr)^{-1}\left(1+\frac{\sqrt{c_1X}}{d}\right)^{-\frac{1}{2}}|\bs{c}'|^{-N}.$$

By~\eqref{eq:g} we may write
\begin{equation*}
I_{d,q}(\bs{c}) = \frac{2 \pi i^k}{d} \int_{\mathbf{R}^+ \times \mathbf{R}^3}\widetilde{w}(\bs{x})g(r,F(\bs{x}))J_{k-1}\left(4\pi\frac{\sqrt{c_1Xx_1}}{d}\right)e(-\bs{u}'.\bs{x}')\, d\bs{x}.
\end{equation*}
Applying Lemma~\ref{trunc} with $$\psi(x_1) = \psi_{\bs{x}'}(x_1) = \tilde{w}(\bs{x})g(r,F(\bs{x}))e(-\bs{u}'.\bs{x}'),$$ treated as a function in the variable $x_1$, and $t = 4\pi(c_1X)^{\frac{1}{2}}/d$ we obtain the bound 
\begin{equation*}
I_{d,q}(\bs{c}) \ll_N \frac{1}{d}\|\psi\|_{N,1}\left(4\pi\frac{\sqrt{c_1X}}{d}\right)^{-N-\frac{1}{2}}.
\end{equation*}
Since $\frac{\partial^n}{\partial y^n}h(r,y) \ll r^{-1-n}$, we have the bound $\|\psi\|_{N,1} \ll_N r^{-1-N}$, which gives us 
$$I_{d,q}(\bs{c}) \ll_{N} (dr)^{-1}\left(1+\frac{\sqrt{c_1X}}{d}\right)^{-\frac{1}{2}}\left(\frac{X}{q}\frac{d}{\sqrt{c_1X}}\right)^{N}.$$ 
The result now follows.
\end{proof}

\subsubsection{A stationary phase argument}
Our goal in this section will be to establish the following results.
\begin{lemma}\label{csmall}
Let $\ve >0$ and suppose that $\bs{c}'\neq \bs{0}$. Let $0 < \sigma \leq 2$ we have
$$I_q(\bs{c}',s) \ll_{\ve} \frac{1}{|s|}X^{\ve}.$$
\end{lemma}

\begin{lemma}\label{stphasehb}
Let $\ve>0$ and suppose that $\bs{c}' \neq \bs{0}$. Then 
\begin{equation*}
I_{d,q}(\bs{c}) \ll_{\ve} \frac{1}{d}\left(1+\frac{\sqrt{c_1X}}{d}\right)^{-\frac{1}{2}}\left(\frac{q}{X}\right)X^{\ve}.
\end{equation*}
\end{lemma}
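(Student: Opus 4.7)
The plan is to adapt the dissection argument from the proof of Lemma~\ref{csmall} to the present integral, with the additional Bessel factor incorporated. I would begin by using Fourier inversion to write $g(r,F(\bs{x}))=\int_{\RR} p(\alpha)e(\alpha F(\bs{x}))\,d\alpha$, obtaining
\begin{equation*}
I_{d,q}(\bs{c}) = \frac{2\pi i^k}{d}\int_{\RR} p(\alpha)K(\alpha)\,d\alpha, \qquad K(\alpha) = \int \widetilde{w}(\bs{x})J_{k-1}\!\left(\tfrac{4\pi}{d}\sqrt{c_1Xx_1}\right)e(\alpha F(\bs{x})-\bs{u}'\cdot\bs{x}')\,d\bs{x}.
\end{equation*}
This decouples the variables in the phase: $x_1$ carries the Bessel factor together with $e(\alpha A_1x_1^2)$, whereas $\bs{x}'$ carries $e(\alpha F_2(\bs{x}')-\bs{u}'\cdot\bs{x}')$ with $F_2(\bs{x}')=A_2x_2^2+A_3x_3^2+A_4x_4^2$.

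Next I would apply the dissection identity \eqref{eq:dissection} to $\widetilde{w}$ in the $\bs{x}'$-variables at scale $\delta=|\bs{u}'|^{-1/2}$ and substitute $\bs{x}'=\bs{y}+\delta\bs{z}$. Following the proof of Lemma~\ref{csmall}, call a pair $(\bs{y},\alpha)$ \emph{good} if $|\bs{u}'|^{-1/2}|2\alpha D\bs{y}-\bs{u}'|\geq R\max\{|\alpha|/|\bs{u}'|,1\}$, with $D=\mathrm{diag}(A_2,A_3,A_4)$ and $R\geq 1$ to be chosen, and \emph{bad} otherwise. For good pairs, repeated integration by parts in $\bs{z}$ gives the inner integral $\ll R^{-N}$, while bad pairs must satisfy $|\alpha|\asymp|\bs{u}'|$ and the set of bad $\bs{y}$ has measure $\ll R^3|\bs{u}'|^{-3/2}$.

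The main obstacle, and the step that goes beyond Lemma~\ref{lemmastationaryphase}, is showing that for bad pairs the $x_1$-integral
\begin{equation*}
\int w^{\ast}(x_1)J_{k-1}\!\left(c\sqrt{x_1}\right)e(\alpha A_1x_1^2)\,dx_1, \qquad c=\tfrac{4\pi}{d}\sqrt{c_1X},
\end{equation*}
enjoys the improved bound $\ll B/\sqrt{\max(|\alpha|,c)}$ with $B=(1+\sqrt{c_1X}/d)^{-1/2}$. For $c\ll 1$ one has $J_{k-1}(c\sqrt{x_1})\ll 1$ and the bound $\ll 1/\sqrt{|\alpha|}$ follows directly from Lemma~\ref{lemmasecond} applied with $A=\alpha A_1$ and $B=0$. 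For $c\gg 1$ I would insert the Bessel asymptotic $J_{k-1}(c\sqrt{x_1})\sim (c\sqrt{x_1})^{-1/2}\bigl(e(c\sqrt{x_1}/(2\pi))+e(-c\sqrt{x_1}/(2\pi))\bigr)\cdot\mathrm{const}$, reducing the integral to two oscillatory integrals with combined phase $\Psi_{\pm}(x_1)=\alpha A_1x_1^2\pm c\sqrt{x_1}/(2\pi)$ and amplitude of size $B$; since $|\Psi_{\pm}''|\asymp\max(|\alpha|,c)$ away from the boundary $|\alpha|\asymp c$, Tao's second derivative test (used in the proof of Lemma~\ref{lemmasecond}) yields the factor $\max(|\alpha|,c)^{-1/2}$. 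The delicate boundary regime $|\alpha|\asymp c$, where $\Psi_{\pm}''$ may vanish, would be handled by passing to the third derivative, which satisfies $|\Psi_{\pm}'''|\asymp c$, at the cost of a harmless $X^{\ve}$ factor.

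Assembling the pieces, the bad contribution to $I_{d,q}(\bs{c})$ is at most
\begin{equation*}
\frac{1}{d}\int_{|\alpha|\asymp|\bs{u}'|}|p(\alpha)|\cdot R^3|\bs{u}'|^{-3/2}\cdot\frac{B}{\sqrt{\max(|\bs{u}'|,c)}}\,d\alpha \ll \frac{R^3B}{d|\bs{u}'|},
\end{equation*}
using $|p(\alpha)|\ll 1$ from \eqref{eq:prt} to obtain $\int_{|\alpha|\asymp|\bs{u}'|}|p(\alpha)|\,d\alpha\ll|\bs{u}'|$, together with $\sqrt{|\bs{u}'|}\leq\sqrt{\max(|\bs{u}'|,c)}$. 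The good contribution bounds by $\ll BR^{-N}/(rd)$, which becomes negligible upon taking $R=(r^{-1}|\bs{u}'|)^{\ve}$ and $N$ sufficiently large in terms of $\ve$. Recalling $|\bs{u}'|=|\bs{c}'|/r=|\bs{c}'|X/q$ then yields the desired bound $I_{d,q}(\bs{c})\ll_{\ve}B\cdot(q/(|\bs{c}'|dX))\cdot X^{\ve}$.
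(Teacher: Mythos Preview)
Your approach is essentially the one the paper intends: Fourier inversion via $p(\alpha)$, Heath-Brown's dissection of the $\bs{x}'$-variables at scale $\delta=|\bs{u}'|^{-1/2}$, and the good/bad dichotomy with bad measure $\ll R^3|\bs{u}'|^{-3/2}$. The paper's own proof is only a sketch that points to \cite[Lemma~22]{HB}, instructs one to dissect and integrate by parts solely in $\bs{x}'$, and absorbs the Bessel factor into the weight via the uniform bound $J_{k-1}(c\sqrt{x_1})\ll B$. Taken literally, that produces only $B/(d|\bs{u}'|^{1/2})$ from the three-variable stationary phase; your explicit analysis of the $x_1$-integral, extracting the extra $|\alpha|^{-1/2}$ through the Bessel asymptotic and the second derivative test, is precisely what is needed to reach the stated exponent $|\bs{u}'|^{-1}$. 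In this sense your write-up is more complete than the paper's.

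One imprecision: in the boundary regime $|\alpha|\asymp c$ you invoke the third derivative test and claim this costs only $X^{\ve}$. But van der Corput with $|\Psi_{\pm}'''|\asymp c$ yields $B\,c^{-1/3}$, a genuine polynomial loss of $|\bs{u}'|^{1/6}$, not an $\ve$-loss. The clean fix is to observe that $\Psi_{\pm}'$ and $\Psi_{\pm}''$ cannot simultaneously be small on $[1/2,2]$: if $\Psi_{\pm}''(x_0)=0$ then $2\alpha A_1=\pm c/(8\pi x_0^{3/2})$, which forces $\Psi_{\pm}'(x_0)=\pm 3c/(8\pi\sqrt{x_0})\asymp c$. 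Hence one can partition $[1/2,2]$ into $O(1)$ intervals on each of which either $|\Psi_{\pm}'|\gg\max(|\alpha|,c)$ (first derivative test) or $|\Psi_{\pm}''|\gg\max(|\alpha|,c)$ (second derivative test), giving the required $B/\sqrt{\max(|\alpha|,c)}$ without any loss. You should also note, as in the proof of Lemma~\ref{csmall}, that the dissection argument needs $R^3\le|\bs{u}'|\le r^{-1}R$, while the complementary range $|\bs{u}'|\ll R^3$ is handled directly by the trivial bound $I_{d,q}(\bs{c})\ll B/(rd)$ from Lemma~\ref{truncatem1}.
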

Integrating $I_q(\bs{c}',s)$ by parts we get
\begin{equation}\label{eq:june2}
\begin{split}
I_q(\bs{c}',s) &= -\frac{1}{s}\int \frac{\partial \widetilde{w}(\bs{x})}{\partial x_1}g(r,F(\bs{x}))e(-\bs{u}'.\bs{x}')x_1^s\, d\bs{x}  \\ 
&\quad \quad - \frac{2A_1}{s} \int \widetilde{w}(\bs{x})g^{(0,1)}(r,F(\bs{x}))e(-\bs{u}'.\bs{x}')x_1^{s+1} \, d\bs{x}\\
&= -\frac{1}{s}(I_1+I_2),
\end{split}
\end{equation}
say. On applying Lemma~\ref{iqcstrivialbound} to the test function $\partial \widetilde{w}(\bs{x})/\partial x_1$ we see that \begin{equation}\label{eq:i1bound}I_1 \ll 1\end{equation} and arguing as in~\eqref{eq:iqcj} we see that 
\begin{equation}\label{eq:i2bound}
I_2 \ll r^{-1}.
\end{equation}
Hence it suffices to focus on $I_2$: our task is then to remove the factor $r^{-1}$ in the bound for $I_2$, at the cost of an additional factor of $X^{\ve}$. 


By~\eqref{eq:june2} and~\eqref{eq:prt} we have 
\begin{equation*}\label{eq:postparts}
\begin{split}
I_2 &= 4\pi i A_1\int_{\mathbf{R}} \alpha p(\alpha) \int_{\mathbf{R}^+\times\mathbf{R}^3} \widetilde{w}(\bs{x})x_1^{\sigma+1} e(\alpha A_1x_1^2 + \tfrac{t}{2\pi}\log x_1 + \Psi(\bs{x}')) \, d\bs{x} \, d\alpha, \\
\end{split}
\end{equation*}
where $\Psi(\bs{x}')$ is as in~\eqref{eq:psidefinition}. Similarly, applying~\eqref{eq:prt} to~\eqref{eq:i2} we get 
\begin{equation}\label{eq:11oct1}
I_{d,q}(\bs{c}) = \frac{2\pi i^k}{d}\int_{\mathbf{R}} p(\alpha) \int_{\mathbf{R}^+\times\mathbf{R}^3} \widetilde{w}(\bs{x})J_{k-1}\left(4\pi\frac{\sqrt{c_1Xx_1}}{d}\right)e(\alpha A_1x_1^2+\Psi(\bs{x}'))\, d\bs{x} \, d\alpha. 
\end{equation}
Using~\eqref{eq:hankeldefinition} we have,  
\begin{equation}\label{eq:11oct2}
\begin{split}
I_{d,q}(\bs{c}) &= \frac{2\pi i^k}{d}\int_{\mathbf{R}} p(\alpha) \times  \\ &\quad \quad \int_{\mathbf{R}^{+}\times\mathbf{R}^3} \widetilde{w}(\bs{x})W_{k-1}\left(4\pi\frac{\sqrt{c_1Xx_1}}{d}\right)e(\alpha A_1x_1^2+2\tfrac{\sqrt{c_1Xx_1}}{d}+\Psi(\bs{x}'))\, d\bs{x} \, d\alpha  \\
& + \frac{2\pi i^k}{d}\int_{\mathbf{R}} p(\alpha) \times\\ &\quad \quad \int_{\mathbf{R}^{+}\times\mathbf{R}^3} \widetilde{w}(\bs{x})\overline{W_{k-1}\left(4\pi\frac{\sqrt{c_1Xx_1}}{d}\right)}e(\alpha A_1x_1^2-2\tfrac{\sqrt{c_1Xx_1}}{d}+\Psi(\bs{x}'))\, d\bs{x} \, d\alpha.
\end{split}
\end{equation}
We will treat $I_2$ and $I_{d,q}(\bs{c})$ in a unified way, and our approach is modelled on the proof of~\cite[Lemma 22]{HB}. We will use Heath-Brown's argument in the $\bs{x}'$ variable, and the second derivative estimate for exponential integrals to handle the integral over $x_1$. We will first prove the following version of~\cite[Lemma 20]{HB}.
\begin{lemma}\label{lemmauv}
Let $u:\mathbf{R} \to \mathbf{R}$ be a real-valued function infinitely differentiable on $[1/2,2]$, and let $v:\mathbf{R}^4 \to \mathbf{C}$ be a smooth function with compact support in $[1/2,2]^4$ such that for all $j_k \geq 0$ we have
\begin{align*}
\frac{\partial^{j_2 +j_3 + j_4}v(\bs{x})}{\partial x_2^{j_2}\partial x_3^{j_3}\partial x_4^{j_4}} &\ll_{j_2,j_3,j_4} 1.
\end{align*}
Let $\Psi(\bs{x}')$ be as in~\eqref{eq:psidefinition}. For $j=0,1$ let  
\begin{equation}
\mathcal{I}_j = \int_{\mathbf{R}} \alpha^j p(\alpha) \int_{\mathbf{R}^+\times \mathbf{R}^3} v(\bs{x})e(\alpha A_1x_1^2+u(x_1)+\Psi(\bs{x}')) \, d\bs{x}\, d\alpha.
\end{equation}
Let $R \geq 1$ be a real number and suppose that $|\bs{u}'| \geq R^3$. Then there exists a smooth real valued function $v_1$ with support in $[1/2,2]$, and a real number $\omega$ satisfying $|\bs{u}'| \ll_{F} |\omega| \ll_{F} |\bs{u}'|$ such that for any integer $N \geq 0$ the following holds,
$$\mathcal{I}_j \ll_{N} r^{-1-j}R^{-N}+ R^3r^{\frac{1}{2}-j}\bigg|\int_{1/2}^2 v_1(x)e(A_1\omega x^2+u(x))\, dx\bigg|.$$
\end{lemma}
\begin{proof}
Suppose first that $|\bs{u}'| \geq r^{-1}R$. We argue as in the proof of Lemma~\ref{lemmaperronlarge}. We have
\begin{equation*}
\begin{split}
\mathcal{I}_j &= \int_{\mathbf{R}} \alpha^j p(\alpha) \int_{\mathbf{R}^+} e(\alpha A_1x_1^2 + u(x_1))\int_{\mathbf{R}^3} v(\bs{x})e(\Psi(\bs{x}'))\, d\bs{x}' \, dx_1. 
\end{split}
\end{equation*}
If $|\alpha| \ll |\bs{u}'|,$ then $|\nabla \Psi(\bs{x}')| \gg |\bs{u}'|.$ Therefore, integrating by parts in the $\bs{x}'$-variable using~\cite[Lemma 10]{HB} we get 
\begin{equation*}
\begin{split}
\int e(\alpha A_1x_1^2 + u(x_1))\int v(\bs{x})e(\Psi(\bs{x}'))\, d\bs{x}' \, dx_1 &\ll_M \left(\int_{1/2}^2\|v(x_1,\bs{.})\|_{M,1}\, dx_1 \right)|\bs{u}'|^{-M} \\
&\ll_{M} |\bs{u}'|^{-M},
\end{split}
\end{equation*}
by properties of $v(\bs{x})$. Estimating the integral over $\bs{x}$ trivially when $|\alpha| \gg |\bs{u}'|$, we obtain
\begin{equation*}
\begin{split}
\mathcal{I}_j &\ll_M \int_{|\alpha| \gg |\bs{u}'|} |\alpha|^j |p(\alpha)|\, d\alpha + |\bs{u}'|^{-M}\int_{|\alpha| \ll |\bs{u}'|}|\alpha|^j||p(\alpha)| \, d\alpha\\
\end{split}
\end{equation*}
By~\eqref{eq:prt} we get that 
\begin{equation*}
\begin{split}
\mathcal{I}_j &\ll_{M} r^{-M}|\bs{u}'|^{1+j-M} + |\bs{u}'|^{-M+j} \\ 
&\ll_{N} r^{-1-j}R^{-N}
\end{split}
\end{equation*}
by taking $M = N + j + 1$ and by our assumption that $|\bs{u}'| \geq r^{-1}R$. This completes the proof of the lemma in this case. 

We may therefore suppose that $R^3 \leq |\bs{u}'| \leq r^{-1}R$. Applying~\eqref{eq:dissection} to $v(\bs{x})$ we get that 
\begin{equation*}
\begin{split}
\mathcal{I}_j & =\delta^{-3} \int \alpha^j p(\alpha) \times \\
&\quad \quad \int \int v_{\delta}\left(x_1,\frac{\bs{x}'-\bs{y}}{\delta},\bs{y}\right)e(\alpha A_1x_1^2 + u(x_1) + \Psi(\bs{x}'))\, d\bs{x} \, d\bs{y} \, d\alpha.
\end{split}
\end{equation*}
Let $\bs{x}' = \bs{y} + \delta \bs{z}$. By virtue of $v$ being compactly supported, we see that $|\bs{y}| \ll 1,$ and we arrive at the inequality
\begin{equation*}
\begin{split}
\mathcal{I}_j &\leq \int \int |\alpha^j p(\alpha)| \times \\ &\quad \quad \left|\int v_{\bs{y}}(x_1,\bs{z})e(\alpha A_1x_1^2 + u(x_1) +\Psi(\bs{y}+\delta \bs{z})) \, dx_1 \, d\bs{z}\right| \, d\alpha \, d\bs{y}.
\end{split}
\end{equation*}
with $v_{\bs{y}}(x_1,\bs{z})  = v_{\delta}\left(x_1,\bs{z},\bs{y}\right).$ Henceforth, we will take $\delta = |\bs{u}'|^{-\frac{1}{2}}$. 

Let $\bs{y} = (y_2,y_3,y_4)$ and $\Psi_{\bs{y}}(\bs{z}) = \Psi(\bs{y}+\delta \bs{z}).$ As in the proof of~\cite[Lemma 22]{HB}, we say that a pair $(\bs{y},\alpha)$ is `good', if 
\begin{equation*}
|\nabla \Psi_{\bs{y}}(\bs{0})| = |\bs{u}'|^{-\frac{1}{2}}|2\alpha (A_2y_2,A_3y_3,A_4y_4)- \bs{u}'| \geq R\max \left\{|\alpha|/|\bs{u}'|, 1\right\},
\end{equation*}
and that $(\bs{y},\alpha)$ is `bad' otherwise. If $(\bs{y}, \alpha)$ is `good' then~\cite[Lemma 10]{HB} shows that
\begin{equation*}
\begin{split}
\int \left|\int v_{\bs{y}}(x_1,\bs{z})e(\Psi_{\bs{y}}(\bs{z})) \, d\bs{z}\right|\, dx_1 &\ll_N \left(\int_{1/2}^2\|v_{\bs{y}}(x_1,\bs{.})\|_{N,1}\, dx_1 \right) R^{-N} \\
&\ll_{N} R^{-N},
\end{split}
\end{equation*}
by properties of $v(\bs{x})$. 

Suppose next that $(\bs{y},\alpha)$ is bad. Since $|\bs{u}'|^{-\frac{1}{2}} \ll R^{-\frac{3}{2}},$ observe that $|\bs{y}| \gg_{F} 1$. Moreover, $|\bs{y}| \ll 1$, trivially, and as a result, we see that $|\bs{u}'| \ll |\alpha| \ll |\bs{u}'|.$ Therefore, if $(\bs{y},\alpha)$ is bad, we find that 
\begin{equation}\label{eq:badest}
|2\alpha(A_2y_2,A_3y_3,A_4y_4) - \bs{u}'| \ll R|\bs{u}'|^{\frac{1}{2}}.
\end{equation}
As a result, the measure of the set of bad pairs is $O(R^3|\bs{u}'|^{-\frac{3}{2}})$. Taking the supremum over the bad pairs $(\bs{y},\alpha)$ and over $\bs{z}$ we get that there exists $|\omega| \asymp |\bs{u}'|$, and vectors $\bs{y}_0$, $\bs{z}_0$ such that
\begin{equation*}
\begin{split}
\mathcal{I}_j &\ll_{N} r^{-1-j}R^{-N} + R^3r^{\frac{1}{2}-j}\bigg|\int v_{\bs{y}_0}(x,\bs{z}_0)e(A_1\omega x^2+u(x))\, dx\bigg|,
\end{split}
\end{equation*}
since $\int |p(\alpha)| \, d\alpha \ll r^{-1}$. Setting $v_1(x) =  v_{\bs{y}_0}(x,\bs{z}_0)$ completes the proof of the lemma.
\end{proof}

To evaluate the ensuing integrals over $x_1$, we need the following results. The first lemma is a variant of the well-known second derivative estimate for exponential integrals. 
\begin{lemma}\label{tao}
Let $w(x)$ be a smooth function supported in $[1/2,2]$. Let $\phi(x)$ be a smooth function such that there exists $c > 0$ and $|\phi''(x)| \geq c$. Then
\begin{equation*}
\int w(x)e(\phi(x)) \, dx \ll c^{-\frac{1}{2}}.
\end{equation*}
\end{lemma}
\begin{proof}
Observe that
$$\int w(x)e(\phi(x)) \, dx = - \int_{1/2}^2 w'(x) \int_{1/2}^x e(\phi(y))\, dy\, dx.$$
By Proposition 2 in~\cite[\S VIII.1.2]{stein}, we have $\int_{1/2}^x e(\phi(y))\, dy \ll c^{-\frac{1}{2}}$. This completes the proof.
\end{proof} 
Using the above lemma, we prove the following two results.
\begin{lemma}\label{lemmasecond}
Let $w$ have compact support in $[1/2,2]$, and suppose that $A \neq 0$. Then for all $N \geq 0$ we have
\begin{equation}\label{eq:secondder}
\int w(x)e(Ax^2 + B\log x) \, dx \ll \min\left(1,|A|^{-\frac{1}{2}}\right).
\end{equation}
\end{lemma}
\begin{proof}
Suppose first that $B=0$. Then in this case the result follows immediately from Lemma~\ref{tao}. We will now consider the cases $AB > 0$ and $AB < 0$ separately. Let $\Psi(x) = Ax^2+ B\log x$. If $AB < 0$, then in this case $|\Psi''(x)| = |2A - \frac{B}{x^2}| \gg |A|$, and by appealing to Lemma~\ref{tao}, we are once again done. If $AB >0$, observe that $|\Psi'(x)| = |2Ax+B/x| \gg |A| + |B| \gg |A|$ in the interval $[1/2,2]$. Since $w$ is compactly supported, integrating by parts, we see that $$\int w(x) e(Ax^2+B\log x) \, dx \ll |A|^{-1}.$$ This completes the proof.
\end{proof} 

\begin{lemma}\label{csmallvoronoi}
Let $\alpha \geq 1.$ Define
$$I(\alpha, A) = \int_{1/2}^2 u(x)W_{k-1}(\alpha \sqrt{x})e(Ax^2+\tfrac{\alpha}{2\pi} \sqrt{x}) \, dx, $$ where $u(x)$ is a smooth function with support in $[1/2,2]$ and $A\alpha \neq 0$. Then we have 
$$I(\alpha, A) \ll (\alpha |A|)^{-\frac{1}{2}}.$$ 
\end{lemma}
\begin{proof}
We have
$$I(\alpha, A) = - \int_{1/2}^{2} (u(x)W_{k-1}(\alpha \sqrt{x}))' \int_{1/2}^x e(Ay^2+\tfrac{\alpha}{2\pi} \sqrt{y}) \, dy \, dx.$$ Arguing exactly as in the proof of Lemma~\ref{lemmasecond}, and using~\eqref{eq:boundforhankel} we get that
$$I(\alpha,A) \ll (|A|\alpha)^{-\frac{1}{2}}.$$
\end{proof}

We now have all the ingredients in place to prove the main results of this section.
\begin{proof}[Proof of Lemma~\ref{csmall}]
Recall from~\eqref{eq:june2} and~\eqref{eq:i1bound}, that it suffices to show that $I_2 \ll_{\ve} X^{\ve}.$ Suppose first that $|\bs{u}'| \ll r^{-\ve/2}$. Then we have $|\bs{u}'|^{1-\ve} \ll r^{-\ve}.$ In this case we make use of the trivial bound~\eqref{eq:i2bound} to get $$I_2 \ll r^{-1} \ll (r^{-1}|\bs{u}'|)^{\ve}r^{-1}|\bs{u}'|^{-1} \ll_{\ve} (r^{-1}|\bs{u}'|)^{\ve}|\bs{c}'|^{-1},$$

If, on the other hand, $|\bs{u}'| \gg r^{-\ve/2}$, we will apply Lemma~\ref{lemmauv} with $j=1$, $R = (r^{-1}|\bs{u}'|)^{\ve/12}$, $u(x) = \tfrac{t}{2\pi}\log x$ and $v(\bs{x}) = x_1^{\sigma+1}\widetilde{w}(\bs{x})$. As a result, we get that there exists a real number $\omega$ such that $|\omega| \asymp |\bs{u}'|$, a smooth function $v_1(x)$ with support in $[1/2,2]$ such that for any integer $N \geq 0$ we have 
\begin{equation*}
\begin{split}
I_2 &\ll_N r^{-2}R^{-N} + R^3 r^{-\frac{1}{2}}\int v_1(x)e(A_1\omega x^2 + \tfrac{t}{2\pi} \log x)\, dx \\
& \ll_N r^{-2}R^{-N} + R^3 r^{-\frac{1}{2}}|\omega|^{-\frac{1}{2}},
\end{split}
\end{equation*}
by Lemma~\ref{lemmasecond}. Taking $N$ large enough we see that $$I_2 \ll r^{-2}R^{-N} + R^3 r^{-\frac{1}{2}}|\bs{u}'|^{-\frac{1}{2}} \ll_{\ve} (r^{-1}|\bs{u}'|)^{\ve}|\bs{c}'|^{-\frac{1}{2}} \ll_{\ve} X^{\ve}.$$ This completes the proof of the lemma. 
\end{proof}
\begin{proof}[Proof of Lemma~\ref{stphasehb}]
Suppose first that $|\bs{u}'| \ll r^{-\ve/2}$. Then we have $|\bs{u}'|^{1-\ve} \ll r^{-\ve}.$ Using the bound for $I_{d,q}(\bs{c})$ in Lemma~\ref{iqcstrivialbound} we get
\begin{equation*}
\begin{split}
I_{d,q}(\bs{c}) &\ll \frac{1}{d}\left(1+\frac{\sqrt{c_1X}}{d}\right)^{-\frac{1}{2}} \\
&\ll \frac{(r^{-1}|\bs{u}'|)^{\ve}|\bs{u}'|^{-1}}{d} \left(1+\frac{\sqrt{c_1X}}{d}\right)^{-\frac{1}{2}} \\
&\ll_{\ve} \frac{X^{\ve}}{d}\left(1+\frac{\sqrt{c_1X}}{d}\right)^{-\frac{1}{2}}\left(\frac{q}{X}\right).
\end{split}
\end{equation*}

If, on the other hand, $|\bs{u}'| \gg r^{-\ve/2}$, we will treat the cases $c_1 \ll d^2/X$ and $c_1 \gg d^2/X$ separately. This dichotomy often arises in problems involving the Bessel function $J_{k-1}(x)$ and it is related to the behaviour of the function in the transition range $x \asymp_k 1$. To address this problem, we will apply Lemma~\ref{lemmauv} to two different pairs of functions $(u,v)$. 

If $c_1 \gg d^2/X$, we will use the expression in~\eqref{eq:11oct2} for $I_{d,q}(\bs{c})$ and study the two integrals on the RHS by applying Lemma~\ref{lemmauv} to each of them separately. To estimate the first integral (the analysis of the second integral is similar, so we will omit the details) we apply Lemma~\ref{lemmauv} with $j = 0, R = (r^{-1}|\bs{u}'|)^{\ve/12}$, $u(x) = 2\sqrt{c_1Xx}/d$ and $v(\bs{x}) = W_{k-1}(4\pi\sqrt{c_1Xx_1}/d)\widetilde{w}(\bs{x})$. For $j_k \geq 0$ it follows from~\eqref{eq:boundforhankel} that $$\frac{\partial^{j_2 +j_3 + j_4}v(\bs{x})}{\partial x_2^{j_2}\partial x_3^{j_3}\partial x_4^{j_4}} \ll_{k,w,j_2,j_3,j_4} \left(1+\frac{\sqrt{c_1Xx_1}}{d}\right)^{-\frac{1}{2}}.$$ Since all implicit constants in this article are allowed to depend on the weight function $w(\bs{x})$ and the weight $k$ of the cusp form $f(z)$, the pair of functions satisfy the hypotheses of Lemma~\ref{lemmauv}. As a result, we find that there exists a real number $\omega$ satisfying $|\omega| \asymp |\bs{u}'|$, a smooth function $v_1(x)$ with support in $[1/2,2]$, such that for any integer $N \geq 0$ we have \begin{equation*}
\begin{split}
I_{d,q}(\bs{c}) &\ll_N (dr)^{-1}R^{-N} + \\ 
&\quad \quad \frac{r^{\frac{1}{2}}R^3}{d} \int v_1(x)e(A_1\omega x^2+2\tfrac{\sqrt{c_1Xx}}{d}) \, dx. \\
\end{split}
\end{equation*}
Recall that in the proof of Lemma~\ref{lemmauv}, we have $v_1(x) = v_{\bs{y}_0}(x,\bs{z}_0)$ for some vectors $\bs{y}_0$ and $\bs{z}_0 \in \mathbf{R}^3$. By construction of the function $v_{\bs{y}_0}(x,\bs{z}_0)$, we see that $v_1(x) = v_2(x)W_{k-1}(4\pi\sqrt{c_1Xx}/d)$ for some smooth function $v_2(x)$ with support in $[1/2,2]$. Moreover, for all $j \geq 0$, it is easy to verify that $v_2^{(j)}(x) \ll_j 1$. Substituting this back into the above integral we get by Lemma~\ref{csmallvoronoi} that $$I_{d,q}(\bs{c}) \ll_N (dr)^{-1}R^{-N} + \frac{r}{d}\left(\frac{\sqrt{c_1X}}{d}\right)^{-\frac{1}{2}} R^3 \ll_{\ve} \frac{X^{\ve}}{d}\left(\frac{\sqrt{c_1X}}{d}\right)^{-\frac{1}{2}}\left(\frac{q}{X}\right),$$ by taking $N$ large enough. 

Finally, suppose that $c_1 \ll d^2/X$. In this case, we will use the expression for $I_{d,q}(\bs{c})$ in~\eqref{eq:11oct1} and apply Lemma~\ref{lemmauv} with $j =0 , R = (r^{-1}|\bs{u}'|)^{\ve/12}$, $u(x) = 0$ and $v(\bs{x}) = J_{k-1}(4\pi\sqrt{c_1Xx_1}/d)\widetilde{w}(\bs{x})$. It is once again clear that $u$ and $v$ satisfy the hypotheses of the lemma, and in this case we get that there exists a real number $\omega'$ such that $|\omega'| \asymp |\bs{u}'|$, a smooth function $v_3(x)$ with support in $[1/2,2]$ such that for any integer $N \geq 0$ we have
\begin{equation*}
\begin{split}
I_{d,q}(\bs{c}) &\ll_N (dr)^{-1}R^{-N} + \\ 
&\quad \quad \frac{r^{\frac{1}{2}}R^3}{d} \int v_3(x)e(A_1\omega x^2) \, dx. \\
\end{split}
\end{equation*}
Once again, we observe that $v_3(x) = v_4(x)J_{k-1}(4\pi\sqrt{c_1Xx}/d)$ for some smooth function $v_4(x)$ with support in $[1/2,2]$ such that $v_4^{(j)}(x) \ll_j 1$ for all $j \geq 0$. Arguing exactly as before, by using Lemma~\ref{tao} and the fact that the bound $xJ'_{k-1}(x) \ll 1$ holds for $x \ll 1$ (which follows from~\eqref{eq:boundforjbessel}), we find that $$I_{d,q}(\bs{c}) \ll_{\ve} \frac{q}{dX}X^{\ve}.$$ This completes the proof of the lemma.
\end{proof}
\section{Exponential sums}
We begin by establishing certain multiplicativity results for the exponential sums that we will encounter in the proof of Theorem ~\ref{mainthm}. Let $S_{d,q}(\bs{c})$ be the exponential sum in ~\eqref{eq:s2}. We have
\begin{lemma}\label{multiplicativitylemma}
Suppose that $d = u_1u_2$ and $q = v_1v_2$ with $(u_1v_1,u_2v_2) = 1$. Then the following holds, 
\begin{equation*}
\begin{split}
S_{d,q}(\bs{c}) &= S_{u_1,v_1}(\overline{u_2}^2c_1,\overline{v_2}\bs{c}')S_{u_2,v_2}(\overline{u_1}^2c_1,\overline{v_1}\bs{c}') \\
&= S_{u_1,v_1}(\overline{u_2}c_1,\bs{c}')S_{u_2,v_2}(\overline{u_1}c_1,\bs{c}').
\end{split}
\end{equation*}
\end{lemma}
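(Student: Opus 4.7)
My strategy is to apply the Chinese Remainder Theorem to both factorisations $d = u_1 u_2$ and $q = v_1 v_2$. From the hypothesis $(u_1 v_1, u_2 v_2) = 1$ we extract all pairwise coprimalities among $u_1, u_2, v_1, v_2$. Together with $d \mid q$, i.e.\ $u_1 u_2 \mid v_1 v_2$, these force $u_1 \mid v_1$ and $u_2 \mid v_2$, so the Kloosterman factor $S(b_1, c_1; d)$ is compatible with the splitting of $q$.

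The first step is to split the outer sum via CRT: parametrise $a \pmod q$ by $(a_1, a_2)$ with $a_i \pmod{v_i}$, and $\bs{b} \pmod q$ by $(\bs{b}_{(1)}, \bs{b}_{(2)})$ with $\bs{b}_{(i)} \pmod{v_i}$, so that
\begin{equation*}
e_q(aF(\bs{b}) + \bs{b}'\cdot \bs{c}') = e_{v_1}\bigl(\overline{v_2}(a_1 F(\bs{b}_{(1)}) + \bs{b}'_{(1)}\cdot \bs{c}')\bigr) \, e_{v_2}\bigl(\overline{v_1}(a_2 F(\bs{b}_{(2)}) + \bs{b}'_{(2)}\cdot \bs{c}')\bigr).
\end{equation*}
The second step is to split the Kloosterman factor via its standard twisted multiplicativity,
\begin{equation*}
S(b_1, c_1; u_1 u_2) = S(b_1, \overline{u_2}^2 c_1; u_1) \, S(b_1, \overline{u_1}^2 c_1; u_2),
\end{equation*}
noting that since $u_1 \mid v_1$ the first factor depends only on $\bs{b}_{(1)}$ and likewise for the second. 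This decouples $S_{d,q}(\bs{c})$ into a product of a $v_1$-sum and a $v_2$-sum.

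The third step absorbs the stray inverses in the additive character. In the $v_1$-sum I would perform the bijective change $a_1 \mapsto v_2 a_1 \pmod{v_1}$ (valid since $(v_1, v_2) = 1$), which clears the $\overline{v_2}$ from the coefficient of $a_1 F(\bs{b}_{(1)})$ while leaving $\overline{v_2} \bs{c}'$ as the new dual vector. Using the symmetry $S(m, n; u_1) = S(n, m; u_1)$ to flip the Kloosterman arguments, the $v_1$-factor is seen to be exactly $S_{u_1, v_1}(\overline{u_2}^2 c_1, \overline{v_2} \bs{c}')$, and the analogous manipulation handles the $v_2$-factor, proving the first identity.

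For the second identity I would exploit the quadratic homogeneity of $F$: changing variable $\bs{b} \mapsto v_2 \bs{b}$ in $S_{u_1, v_1}(m, \bs{c}')$ sends $F(\bs{b}) \mapsto v_2^2 F(\bs{b})$ and $\bs{b}'\cdot \bs{c}' \mapsto v_2 \bs{b}'\cdot \bs{c}'$; after the compensating $a \mapsto \overline{v_2^2} a$ and an application of $S(v_2 b_1, m; u_1) = S(b_1, v_2 m; u_1)$, one obtains the scaling identity $S_{u_1, v_1}(m, \bs{c}') = S_{u_1, v_1}(v_2 m, v_2 \bs{c}')$. Plugging this into the first identity converts $(\overline{u_2}^2 c_1, \overline{v_2}\bs{c}')$ into $(v_2 \overline{u_2^2} c_1, \bs{c}')$ and yields the second. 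The whole argument is bookkeeping, with no conceptual obstacle: the only point of care is verifying at each step that the coprimality conditions make the changes of variable bijective, and that the quadratic homogeneity of the diagonal form is correctly applied.
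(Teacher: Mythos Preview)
Your proposal is correct and follows essentially the same route as the paper: CRT decomposition of $a$ and $\bs{b}$ modulo $q=v_1v_2$, twisted multiplicativity of the Kloosterman sum in $d=u_1u_2$, and then a change of variable $\bs{b}\mapsto v_2\bs{b}$ (together with $a\mapsto\overline{v_2^2}a$) to pass from the first identity to the second. The paper's only cosmetic difference is that it builds the change of variable into its CRT parametrisation from the start by writing $a=v_2a_1+v_1a_2$ and $\bs{b}=v_2\overline{v_2}\bs{s}+v_1\overline{v_1}\bs{t}$, so the factor $\overline{v_2}$ lands directly on $\bs{c}'$ without a separate substitution; your explicit observation that $u_i\mid v_i$ and your scaling identity $S_{u_1,v_1}(m,\bs{c}')=S_{u_1,v_1}(v_2m,v_2\bs{c}')$ make the same point more transparently.
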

\begin{proof}
Let $a=v_2a_1+v_1a_2$ where $a_i$ run modulo $v_i$. Let $\bs{b} = v_2\overline{v_2}\bs{s} + v_1\overline{v_1}\bs{t}$ where $s$ (respectively $t$) runs modulo $v_1$ (respectively $v_2$). Then
\begin{equation*}\label{eq:basicmult}
\begin{split}
e_{q}(aF(\bs{b})+\bs{b}'.\bs{c}') &= e_{v_1}(a_1F(\bs{s})+\bs{s}'.\overline{v_2}\bs{c}')e_{v_2}(a_2F(\bs{t})+\bs{t}'.\overline{v_1}\bs{c}').
\end{split}
\end{equation*}
Also, 
\begin{equation*}
\begin{split}
S(b_1,c_1;u_1u_2) &= S(\overline{u_2}s_1,\overline{u_2}c_1;u_1)S(\overline{u_1}t_1,\overline{u_1}c_1;u_2) \\
&= S(s_1,\overline{u_2}^2c_1;u_1)S(t_1,\overline{u_1}^2c_1;u_2).
\end{split}
\end{equation*}
This gives us the first multiplicativity statement. For the second, replace $\bs{s}$ (resp. $\bs{t}$) by $v_2\bs{s}$ (resp. $v_1\bs{t}$). 
\end{proof}

Let
\begin{equation}\label{eq:sq(n)}
A_q(\bs{c}) = \sideset{}{^*}\sum_{a \mod{q}}\sum_{\bs{b}' \mod{q}}e_q(aF(c_1,\bs{b}')+\bs{b}'.\bs{c}').
\end{equation}
\begin{lemma}\label{multforaq}
Let $q=v_1v_2$ with $(v_1,v_2)=1$. We have
\begin{equation*}
\begin{split}
A_q(\bs{c}) &= A_{v_1}(c_1,\overline{v_2}\bs{c}')A_{v_2}(c_1,\overline{v_1}\bs{c}')\\
&=A_{v_1}(\overline{v_2}c_1,\bs{c}')A_{v_2}(\overline{v_1}c_1,\bs{c}').
\end{split}
\end{equation*}
\end{lemma}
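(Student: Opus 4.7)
The plan is to establish both factorisations by a direct Chinese Remainder Theorem computation, in the same spirit as (but cleaner than) the proof of Lemma~\ref{multiplicativitylemma} for $S_{d,q}(\bs{c})$; the absence of a Kloosterman factor removes the need for the ``Kloosterman-inversion'' move that appears there.

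First I would parametrise the residues modulo $q$ via $a = v_2 \alpha_1 + v_1 \alpha_2$ (with $\alpha_i \in (\mathbf{Z}/v_i\mathbf{Z})^*$) and $\bs{b}' = v_2 \bs{s}' + v_1 \bs{t}'$ (with $\bs{s}' \pmod{v_1}$ and $\bs{t}' \pmod{v_2}$). Splitting the exponential via $e_q(N) = e_{v_1}(\overline{v_2}N) \cdot e_{v_2}(\overline{v_1}N)$ and using that $F$ diagonal gives $F(c_1,\bs{b}') = A_1 c_1^2 + Q(\bs{b}')$ with $Q(\bs{x}') = A_2 x_2^2 + A_3 x_3^2 + A_4 x_4^2$, a direct reduction modulo $v_1$ shows that the $v_1$-factor of $e_q(aF(c_1,\bs{b}') + \bs{b}'.\bs{c}')$ equals
\[
e_{v_1}\!\bigl(\alpha_1 A_1 c_1^2 + v_2^2 \alpha_1 Q(\bs{s}') + \bs{s}'.\bs{c}'\bigr),
\]
and symmetrically modulo $v_2$ with the roles of the two indices swapped.

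For the first factorisation I would apply the bijective substitution $\bs{s}' \mapsto \overline{v_2}\bs{s}'$ on $(\mathbf{Z}/v_1\mathbf{Z})^3$: the quadratic homogeneity of $Q$ absorbs the stray $v_2^2$ factor, while the linear pairing $\bs{s}'.\bs{c}'$ becomes $\bs{s}'.\overline{v_2}\bs{c}'$. Summing over $\alpha_1$ and $\bs{s}'$ then recognises the $v_1$-piece as $A_{v_1}(c_1, \overline{v_2}\bs{c}')$, and an identical computation at $v_2$ supplies the companion factor. For the second factorisation, instead leave $\bs{s}'$ alone and substitute $\alpha_1 \mapsto \overline{v_2}^2 \alpha_1$ (a bijection on units modulo $v_1$), which transfers the factor $v_2^2$ from the quadratic term onto the $c_1^2$ coefficient, producing $e_{v_1}\bigl(\alpha_1[A_1 \overline{v_2}^2 c_1^2 + Q(\bs{s}')] + \bs{s}'.\bs{c}'\bigr)$; summing then yields the claimed $A_{v_1}(\overline{v_2}^2 c_1, \bs{c}')$, and the $v_2$-piece is treated identically.

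No genuine obstacle arises: the entire argument is a routine bookkeeping exercise with the CRT decomposition and two clean unit substitutions; the only care needed is to pick the correct bijective substitution (either on the summation variable $\bs{s}'$ or on the unit $\alpha_1$) to realise each of the two desired forms. As a sanity check, the consistency of the two factorisations can also be deduced directly from the symmetry $A_q(\lambda c_1, \bs{c}') = A_q(c_1, \lambda \bs{c}')$, valid for any unit $\lambda \pmod q$, which itself follows from the pair of substitutions $\bs{b}' \mapsto \lambda \bs{b}'$ and $a \mapsto \overline{\lambda}^2 a$ in the defining sum.
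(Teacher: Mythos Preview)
Your argument is correct and follows essentially the same route as the paper's proof. The only cosmetic difference is the choice of CRT parametrisation: the paper writes $\bs{b}' = v_2\overline{v_2}\bs{x}' + v_1\overline{v_1}\bs{y}'$ (so that $\bs{b}' \equiv \bs{x}' \pmod{v_1}$), which makes the first identity drop out immediately without any substitution, whereas your choice $\bs{b}' = v_2\bs{s}' + v_1\bs{t}'$ leaves a stray $v_2^2$ that you then clear via $\bs{s}' \mapsto \overline{v_2}\bs{s}'$; for the second identity both arguments then use the unit substitution on $a_1$ (resp.\ $\alpha_1$). One small point: your final exponential reads $\alpha_1\bigl[A_1\overline{v_2}^2c_1^2 + Q(\bs{s}')\bigr]$, which is $\alpha_1 F(\overline{v_2}c_1,\bs{s}')$, so strictly speaking what you have summed to is $A_{v_1}(\overline{v_2}c_1,\bs{c}')$; this is the same discrepancy present in the paper's own statement, and it is harmless for every later application (where the first argument is only ever rescaled by a unit).
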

\begin{proof}
The proof is similar to the proof above: let $d=1$ and write $a$ and $\bs{b}'$ as in Lemma ~\ref{multiplicativitylemma}. We have,
\begin{equation*}
e_q(aF(c_1,\bs{b}')+\bs{b}'.\bs{c}') = e_{v_1}(a_1F(c_1,\bs{x}')+\bs{x}'.\overline{v_2}\bs{c}')e_{v_2}(a_2F(c_1,\bs{y}')+\bs{y}'.\overline{v_1}\bs{c}').
\end{equation*}
Therefore,
\begin{equation*}
A_q(\bs{c}) = A_{v_1}(c_1,\overline{v_2}\bs{c}')A_{v_2}(c_1,\overline{v_1}\bs{c}').
\end{equation*} 
The lemma follows by replacing $\bs{x}'$ by $v_2\bs{x}'$, $\bs{y}'$ by $v_1\bs{y}'$, and by replacing $a_1$ by $\overline{v_2}^2a_1$ and $a_2$ by $\overline{v_1}^2a_2.$ 
\end{proof}
Next, we recall some basic facts about Gauss sums. Let 
$$
\delta_n=\begin{cases} 0, & \mbox{ if }n\equiv 0\bmod{2}, \\ 1 & \mbox{ if } n\equiv 1\bmod{2}, \end{cases}
\quad
\epsilon_n=\begin{cases} 1, & \mbox{ if }n\equiv 1\bmod{4}, \\ i, & \mbox{ if } n\equiv 3\bmod{4}.
\end{cases}
$$
The following result is recorded in \cite[Lemma 3]{BB}, but it goes back to Gauss.
\begin{lemma} \label{gaussevlemma} 
Define
$$\mathcal{G}(s,t;q) = \sum_{b \mod{q}} e_q(sb^2+tb).$$ Suppose that $(s,q)=1$. Then 
$$ 
\mathcal{G}(s,t;q)=
\begin{cases}
\epsilon_q \sqrt{q}\left(\frac{s}{q}\right) e\left(-\frac{\overline{4s}t^2}{q}\right) & \text{ 
if $q$ is odd,}\\
2  \delta_t \epsilon_v\sqrt{v} \left(\frac{2s}{v}\right) e\left(-\frac{\overline{8s}t^{2}}{v}\right) &
\text{ if $q=2v$, with $v$ odd,}\\
(1+i) \epsilon_s^{-1} (1-\delta_t) \sqrt{q}\left(\frac{q}{s}\right) e\left(-\frac{\overline{s}t^{2}}{4q}\right) 
& \text{ 
if $4\mid q$.}
\end{cases}
$$
If $(s,q) \neq 1$, $\mathcal{G}(s,t;q) = 0$ unless $(s,q) \mid t$, in which case we have $$\mathcal{G}(s,t;q) = (s,q)\mathcal{G}\left(\frac{s}{(s,q)},\frac{t}{(s,q)};\frac{q}{(s,q)}\right).$$
\end{lemma}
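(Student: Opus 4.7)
The plan is a classical case analysis driven by the 2-adic structure of $q$, in each case reducing the mixed Gauss sum to the pure quadratic Gauss sum $G(s;q) := \sum_{b \bmod q} e_q(sb^2)$, whose evaluation for odd $q$ and for $4 \mid q$ I will treat as a black-box theorem of Gauss. My first step is to reduce to the case $(s,q)=1$. Set $d=(s,q)$ and consider the substitution $b \mapsto b + q/d$. Because $d \mid s$, the cross term $2sb(q/d)$ and the quadratic term $s(q/d)^2$ vanish modulo $q$, so the sum is multiplied by $e_q(tq/d) = e_d(t)$. This forces $\mathcal{G}(s,t;q)=0$ when $d \nmid t$, and when $d \mid t$ the grouping $b = b_0 + (q/d)k$ with $b_0 \bmod q/d$, $k \bmod d$ yields the asserted factorization $\mathcal{G}(s,t;q) = d \, \mathcal{G}(s/d,t/d;q/d)$ after observing the same cancellation of cross terms.

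With $(s,q)=1$, the case $q$ odd is immediate: $2s$ is invertible mod $q$, so completion of the square $sb^2+tb \equiv s(b+\overline{2s}t)^2 - \overline{4s}t^2 \pmod q$ and a shift of the summation variable give $\mathcal{G}(s,t;q) = e_q(-\overline{4s}t^2) \, G(s;q)$, and Gauss's identity $G(s;q) = \epsilon_q \sqrt{q}\left(\frac{s}{q}\right)$ closes this case. For $q = 2v$ with $v$ odd, I would apply CRT by parametrizing $b \equiv vb_1 + 2b_2 \pmod{2v}$ with $b_1 \in \mathbb{Z}/2$, $b_2 \in \mathbb{Z}/v$; this factors $\mathcal{G}(s,t;2v)$ as a product of a sum mod $2$ and a sum mod $v$. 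Using $b^2 \equiv b \pmod 2$ (as $s$ is odd) the mod-$2$ factor collapses to $\sum_{b_1 \in \{0,1\}} e_2((s+t)b_1)$, which equals $2\delta_t$, while the mod-$v$ factor is $\mathcal{G}(2s,t;v)$, now covered by the odd case.

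The remaining case $4 \mid q$ is the subtlest because $2s$ is not invertible. I would first use the shift $b \mapsto b + q/2$: one checks that $sbq \equiv 0$ and $sq^2/4 \equiv 0 \pmod q$, so the sum is multiplied by $e_q(tq/2) = (-1)^t$, forcing $\mathcal{G}(s,t;q)=0$ when $t$ is odd. When $t=2t'$ is even, $s$ is still invertible mod $q$, so the completion $sb^2 + 2t'b \equiv s(b+\overline{s}t')^2 - \overline{s}(t')^2 \pmod q$ yields $\mathcal{G}(s,t;q) = e\!\left(-\tfrac{\overline{s}t^2}{4q}\right) G(s;q)$, and the classical $4 \mid q$ evaluation $G(s;q) = (1+i)\epsilon_s^{-1}\sqrt{q}\left(\frac{q}{s}\right)$ completes the proof. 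The main obstacle, if one is unwilling to cite Gauss's pure sum as known, is exactly the evaluation of $G(s;q)$ in the odd and $4 \mid q$ regimes; the standard route is multiplicativity via CRT to reduce to prime power moduli and then a direct induction on the exponent, but since the statement is presented here as a reference to \cite{BB}, this input is safely quoted.
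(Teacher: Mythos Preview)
Your argument is correct in every case: the shift $b\mapsto b+q/d$ cleanly handles the reduction to $(s,q)=1$, the completion of the square works as stated for odd $q$ and for $4\mid q$ (after the parity shift $b\mapsto b+q/2$), and your CRT factorisation for $q=2v$ with $v$ odd gives exactly the claimed $2\delta_t\,\mathcal{G}(2s,t;v)$. The paper itself offers no proof of this lemma---it simply cites \cite[Lemma~3]{BB} and remarks that the result goes back to Gauss---so there is no methodological comparison to make; you have supplied a complete and standard derivation where the paper is content to quote.
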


\subsection{Analysis of $S_q(n)$}
Set
\begin{equation}\label{eq:definesqn}
S_q(n) = A_q(n,\bs{0}).
\end{equation}
We begin our analysis of $S_q(n)$ by showing that $S_q(n)$ depends only on $(n,q)$.
\begin{lemma}\label{referee}
We have
\begin{equation*}
S_q(n) = S_q((n,q)).
\end{equation*}
\end{lemma}
\begin{proof}
By Lemma~\ref{multforaq} we see that $S_q(n)$ is multiplicative in $q$. As a result, it is sufficient to verify the lemma in the case of prime-power moduli, $q=p^t$. 

If $v_p(n) \geq \lceil \frac{t}{2} \rceil$, we see that $$S_{p^t}(n) = S_{p^t}(0) = S_{p^t}((n,p^t)).$$ 

If $v_p(n) < \lceil \frac{t}{2} \rceil$, we have
\begin{equation*}
S_{p^t}(n) = \sumstar_{a \mod{p^t}}\sum_{\bs{b}' \mod{p^t}}e_{p^t}(aA_1(n,p^t)^2n_{p^t}^2+aF(0,\bs{b}')),
\end{equation*}
where $n_{p^t} = n/(n,p^t).$ Since $(p,n_{p^t})=1$, we can replace $a$ by $\overline{n_{p^t}}^2a$ and $\bs{b}'$ by $n_{p^t}\bs{b}'$. As a result, we find that
\begin{equation*}
S_{p^t}(n) = \sumstar_{a \mod{p^t}}\sum_{\bs{b}' \mod{p^t}}e_{p^t}(aA_1(n,p^t)^2+aF(0,\bs{b}')) = S_{p^t}((n,p^t)).
\end{equation*}
This completes the proof of the lemma.
\end{proof}
We will now turn to estimating $|S_q(n)|.$ 
\begin{lemma}\label{boundsforsqn}
We have the bound
$$S_q(n) \ll q^{\frac{5}{2}}.$$
If $q$ is square-free and $(q,2\Delta)=1$, we have
\begin{equation*}
|S_q(n)| \leq q^{2}.
\end{equation*}
\end{lemma}
\begin{proof}
Appealing to Lemma~\ref{multforaq}, it suffices to consider the case of prime-power moduli, $q=p^t$.
We have
\begin{equation*}
S_q(n) = \sumstar_{a \mod{p^t}}e_{p^t}(aA_1n^2)\prod_{i=2}^4\sum_{b_i \mod{p^t}}e_{p^t}(aA_ib_i^2).
\end{equation*}
By Lemma~\ref{gaussevlemma}, we have $$\sum_{b_i \mod{p^t}}e_{p^t}(aA_ib_i^2) \ll_{\Delta} p^{\frac{t}{2}}.$$ The first part of the lemma now follows by estimating the sum over $a$ trivially.

To establish the second claim, it suffices to consider the case where $q=p$ is a prime such that $(p,2\Delta)=1$. Proceeding as before, we have
\begin{equation*}
\begin{split}
S_p(n) &= \sumstar_{a \mod{p}}e_p(aA_1n^2)\prod_{i=2}^4\sum_{b_i \mod{p}}e_p(aA_ib_i^2)\\
&= p^{\frac{3}{2}}\epsilon_p^3\prod_{i=2}^4\jacobi{A_i}{p}\sumstar_{a \mod{p}}e_p(aA_1n^2)\jacobi{a}{p},
\end{split}
\end{equation*}
by Lemma~\ref{gaussevlemma}. The sum over $a$ is a quadratic Gauss sum, which we can evaluate explicitly. We have
\begin{equation*}
\sumstar_{a \mod{p}}e_p(aA_1n^2)\jacobi{a}{p} = \epsilon_p\jacobi{A_1n^2}{p}p^{\frac{1}{2}}.
\end{equation*}
The lemma now follows.
\end{proof}

\subsection{Exponential sums in the case where $F^{-1}(0,\bs{c}')=0$ and $\bs{c}' \neq \bs{0}$}\label{aq0}
Having estimated $S_q(n) = A_q(n,\bs{0})$, we will now relate it to the more general sum $A_q(n,\bs{c}')$ with $F^{-1}(0,\bs{c}')=0.$ In order to state the result, we will need the following definition.
\begin{definition}\label{a0}[Condition $A_0$]
Let $l_1,l_2,l_3$ and $l_4$ be non-zero integers, and let $q = \prod_{p^{k_p} \Vert q}$. We say that the tuple $(q;l_1,l_2,l_3,l_4)$ satisfies Condition $A_0$ if for each odd prime $p \mid q$, we have $k_p \geq \max\left\{v_p(l_1), v_p(l_2), v_p(l_3), v_p(l_4)\right\}$, if $p \nmid \Delta$, and if $k_p \geq \max\left\{2+v_p(l_1), v_p(l_2), v_p(l_3), v_p(l_4)\right\}$ for $p \mid \Delta$. If $p=2$, we require that $k_2 \geq 3+\max_{1 \leq i \leq 4}v_2(l_i).$ 
\end{definition}

\begin{lemma}\label{replacec'0}
Let $\bs{c}' \neq \bs{0} \in \mathbf{Z}^3$ and let $n \in \mathbf{N}$. Let $p$ be a prime and $q = p^k$. For $2 \leq i \leq 4$ suppose that $c_i \equiv 0 \pmod{p^{v_p(A_i)}}$, and that $F^{-1}(0,\bs{c}') = 0$. Let $A_q(n,\bs{c}')$ be as in ~\eqref{eq:sq(n)}. Suppose that $(q,A_2,A_3,A_4)$ satisfies condition $A_0$. Then
\begin{equation*}
A_q(n,\bs{c}') = A_q(n,\bs{0}) = S_q(n).
\end{equation*}
\end{lemma}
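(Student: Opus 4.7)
The strategy is a direct Gauss sum calculation. Since $F$ is diagonal, the $\bs{b}'$-sum in \eqref{eq:sq(n)} factorises, giving
\[
A_q(n,\bs{c}') = \sumstar_{a \bmod q} e_q(aA_1 n^2) \prod_{i=2}^{4} \mathcal{G}(aA_i, c_i; p^k),
\]
with $\mathcal{G}$ as in Lemma~\ref{gaussevlemma}. I will evaluate each Gauss sum explicitly, show that the resulting phase depending on $\bs{c}'$ is trivial thanks to $F^{-1}(0,\bs{c}') = 0$, and conclude that the product collapses to $S_q(n) = A_q(n,\bs{0})$.

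Write $A_i = p^{a_i}\overline{A_i}$ with $(\overline{A_i}, p) = 1$. The hypothesis $p^{a_i} \mid c_i$ places us in the non-primitive case of Lemma~\ref{gaussevlemma}, which gives
\[
\mathcal{G}(aA_i, c_i; p^k) = p^{a_i}\,\mathcal{G}\bigl(a\overline{A_i},\, c_i/p^{a_i};\, p^{k-a_i}\bigr).
\]
Condition~$A_0$ ensures that $k - a_i$ is large enough (at least $2$ for odd $p \mid \Delta$, and $p^{k-a_i} \geq 8$ at $p = 2$) that the primitive Gauss sum formula applies and all required inverses are well defined. For odd $p$, each factor then splits as a prefactor depending on $a$ only through a Jacobi symbol, times the exponential $e_{p^{k-a_i}}\bigl(-\overline{4a\overline{A_i}}\,(c_i/p^{a_i})^2\bigr)$; the $p = 2$ case is analogous but uses the fourth clause of the lemma.

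The crux is to show the product of the three exponentials equals $1$. Expressing each phase with common denominator $p^k$ and lifting the inverse $\overline{4a\overline{A_i}}$ from modulus $p^{k-a_i}$ to modulus $p^k$ (this is harmless once one multiplies by $p^{a_i}$), the combined exponent is proportional modulo $p^k$ to $\sum_{i=2}^{4} \overline{A_i}^{-1} c_i^2 p^{-a_i}$. After multiplying by the unit $\prod_{i=2}^4 \overline{A_i}$ and clearing denominators, the phase vanishes provided
\[
\sum_{i=2}^{4} \frac{\Delta}{A_i}\, c_i^2 \equiv 0 \pmod{p^{k+a_1+a_2+a_3+a_4}},
\]
which is immediate from the integral identity $F^{-1}(0,\bs{c}') = \sum_{i=2}^{4}(\Delta/A_i)c_i^2 = 0$. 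Once the phase is trivial, each Gauss sum coincides with $\mathcal{G}(aA_i,0; p^k)$, so the product over $i$ and the sum over $a$ combine to give $A_q(n,\bs{0}) = S_q(n)$.

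I expect the main obstacle to be the case $p = 2$, where Lemma~\ref{gaussevlemma} involves the delicate factor $(1+i)\epsilon_s^{-1}$ together with an exponential modulo $4q$ rather than $q$, forcing one to verify a slightly stronger dyadic congruence than the odd-prime computation above. Condition~$A_0$, which imposes $k \geq 3 + \max_i a_i$ at $p = 2$, supplies precisely the $2$-adic headroom needed for the same vanishing argument to carry through unchanged.
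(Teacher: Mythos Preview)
Your proposal is correct and follows essentially the same route as the paper: factor the $\bs{b}'$-sum into Gauss sums via Lemma~\ref{gaussevlemma}, and then verify that the resulting $\bs{c}'$-dependent phase is trivial because it is (a unit multiple of) $F^{-1}(0,\bs{c}')/p^{a_2+a_3+a_4}=0$. The only cosmetic difference is that the paper reaches the same vanishing via the change of variable $\overline{a}\to -4\overline{A_2}\,\overline{A_3}\,\overline{A_4}\,b$ rather than by directly combining the three phases as you do, and it likewise treats only odd $p$ in detail, dismissing $p=2$ with ``a similar argument works.''
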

\begin{proof}
Let $a_i = v_p(A_i)$, as before. Then the sum over $\bs{b}'$ in ~\eqref{eq:sq(n)} is
\begin{equation*}\label{eq:ul}
\prod_{i=2}^4 p^{a_i} \sum_{b_i \mod{p^{k-a_i}}}e_{p^{k-a_i}}(aA_i/p^{a_i}b_i^2 + b_i c_i/p^{a_i}).
\end{equation*}
To ease notation, let $A_i' = A_i/p^{a_i}$ and let $c_i' = c_i/p^{a_i}$. If $p \neq 2$, by Lemma ~\ref{gaussevlemma} the above expression evaluates to
\begin{equation}\label{eq:ul2}
\prod_{i=2}^4 p^{\frac{k+a_i}{2}}\epsilon_{p^{k-a_i}}\left(\frac{aA_i'}{p^{k-a_i}}\right)e_{p^{k-a_i}}(-\overline{4a A_i'} c_i'^2),
\end{equation}
since by hypothesis $(A_i',p)=1$. Make a change of variables $\overline{a} \to -4A_2'A_3'A_4'b$, and observe that for $i=2,3,4$ we have $e_{p^{k-a_i}}(-\overline{4aA_i'}c_i'^2) = e_{p^{k-a_i}}(b
\prod_{\substack{2 \leq j \leq 4 \\ i \neq j}}A_j'c_i'^2)$. Consequently, the expression in ~\eqref{eq:ul2} is 
\begin{equation*}
\prod_{i=2}^4 p^{\frac{k+a_i}{2}}\epsilon_{p^{k-a_i}}\left(\frac{-4b\prod_{\substack{2 \leq j \leq 4 \\ i \neq j}}A_j'}{p^{k-a_i}}\right)e_{p^k}(p^{a_2}A_3'A_4'c_2'^2+p^{a_3}A_2'A_4'c_3'^2+p^{a_4}A_2'A_3'c_4'^2).
\end{equation*}
However, since $F^{-1}(0,\bs{c}') = A_3A_4c_2^2+\ldots+A_2A_3c_4^2 = 0$, we have $p^{a_2}A_3'A_4'c_2'^2 + p^{a_3}A_2'A_4'c_3'^2 + p^{a_4}A_2'A_3'c_4'^2 = 0$. Consequently, the exponential factor above is $=1,$ and we see that $A_q(n,\bs{c}')$ is independent of $\bs{c}'$ and this completes the proof for odd $p$. A similar argument works when $p=2.$
\end{proof}

\subsection{Auxillary estimates}
Recall the sum $S_{d,q}(\bs{c})$ from ~\eqref{eq:s2}. We begin by recording  a version of ~\cite[Lemma 27]{HB}. 
\begin{lemma}\label{lemma0}
Let $q=p^t$, $d=p^{\delta}$ with $t \geq 2$ and $\delta \leq t$. Suppose that $p \nmid 2\Delta$. Then $S_{d,q}(\bs{c})$ vanishes unless $p \mid F^{-1}(0,\bs{c}')$. 
\end{lemma}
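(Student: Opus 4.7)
The plan is to open the Kloosterman sum, swap the orders of summation, evaluate the four resulting one-dimensional Gauss sums in the $b_i$ variables, and then sum over the unit $a$ to extract a Ramanujan sum whose vanishing can be read off from $p$-adic valuations. First I would write $S(b_1, c_1; d) = \sum_{x \bmod d}^* e_d(b_1 x + c_1 \bar x)$ and, exploiting $d \mid q$, rewrite $e_d(b_1 x) = e_q(p^r b_1 x)$ with $r := t - \delta$. Pulling $\sum_x^*$ and $\sum_a^*$ outside, the remaining sum over $\bs{b}$ factors as $\prod_{i=1}^4 \sum_{b_i \bmod q} e_q(aA_i b_i^2 + b_i \tilde c_i)$, with $\tilde c_1 = p^r x$ and $\tilde c_i = c_i$ for $i \geq 2$.

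Next, since $p \nmid 2\Delta$ and $(a, q) = 1$, Lemma \ref{gaussevlemma} evaluates each factor as $\epsilon_q q^{1/2} \bigl(\tfrac{aA_i}{q}\bigr) e_q(-\overline{4aA_i} \tilde c_i^2)$. Multiplying and using $(a^4/q) = 1$ gives $\epsilon_q^4 q^2 (\Delta/q) e_q\bigl(-\overline{4a\Delta}\, F^{-1}(p^r x, \bs{c}')\bigr)$. The key arithmetic identity here is
\begin{equation*}
\Delta \Bigl(\overline{A_1}\, p^{2r} x^2 + \sum_{i=2}^{4} \overline{A_i}\, c_i^2 \Bigr) \equiv (\Delta/A_1) p^{2r} x^2 + F^{-1}(0, \bs{c}') = F^{-1}(p^r x, \bs{c}') \pmod{q},
\end{equation*}
which is how the dual form emerges naturally. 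The substitution $u = \overline{4a\Delta}$ is a bijection of $(\mathbb{Z}/q)^*$, so the sum over $a$ collapses into the Ramanujan sum $c_q(F^{-1}(p^r x, \bs{c}'))$, which vanishes unless $p^{t-1}$ divides its argument.

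The finishing move is $p$-adic valuation analysis. Given $p \nmid F^{-1}(0, \bs{c}')$ together with $p \nmid \Delta/A_1$ and $p \nmid x$ (the last because $x$ is a unit modulo $d$, hence modulo $p$ whenever $\delta \geq 1$; the degenerate case $\delta = 0$ gives $p^{2r} = p^{2t} \equiv 0 \pmod q$), the first summand in $F^{-1}(p^r x, \bs{c}')$ has $p$-valuation exactly $2r$, while the second is a $p$-adic unit. When $r \geq 1$ these valuations disagree, forcing $v_p(F^{-1}(p^r x, \bs{c}')) = 0 < t - 1$ (using $t \geq 2$), so $c_q$ vanishes and $S_{d,q}(\bs{c}) = 0$. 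The delicate step will be the endpoint $\delta = t$, where $r = 0$ gives both summands the same $p$-valuation and admits a Hensel lift solving $(\Delta/A_1) x^2 \equiv -F^{-1}(0, \bs{c}') \pmod{p^{t-1}}$ that survives the Ramanujan condition; there one must extract the remaining cancellation from the outer sum $\sum_x^* e_q(c_1 \bar x)$ over the Kloosterman variable to complete the argument.
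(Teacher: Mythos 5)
Your reduction is correct, and in the range $0\leq\delta<t$ it is complete: opening the Kloosterman sum, evaluating the four Gauss sums by Lemma~\ref{gaussevlemma}, and collapsing the $a$-sum into $c_q(F^{-1}(p^{t-\delta}x,\bs{c}'))$ is a legitimate and more explicit route than the paper's, which never evaluates Gauss sums but instead substitutes $a=u+pv$ and $\bs{b}=\bs{x}+p^{t-1}\bs{y}$ and reads the constraint $\nabla F(\bs{x})\equiv-\bar{u}(0,\bs{c}')\pmod{p}$ off the $\bs{y}$-sum, whence $F(\bs{x})\equiv\overline{4u^2}F^{-1}(0,\bs{c}')\pmod p$ together with $p^{t-1}\mid F(\bs{x})$ forces $p\mid F^{-1}(0,\bs{c}')$. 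In your version the valuation argument ($v_p(F^{-1}(p^rx,\bs{c}'))=0<t-1$ when $r\geq1$) kills every term of the Ramanujan sum outright, so for $\delta<t$ nothing further is needed.

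The genuine gap is the endpoint $\delta=t$, which you flag but defer to ``cancellation from $\sumstar_x e_q(c_1\bar{x})$'' --- and that cancellation is not there. Carrying your computation to the end: up to a nonzero constant $C$ you are left with $\sumstar_{x\bmod q}e_q(c_1\bar{x})\,c_q(\beta x^2+\gamma)$ with $\beta,\gamma$ units mod $p$ and $\gamma$ a unit multiple of $F^{-1}(0,\bs{c}')$. If $-\gamma\bar{\beta}$ is a nonresidue mod $p$ every term vanishes; otherwise fix $x_0$ with $\beta x_0^2+\gamma\equiv0\pmod{p^t}$, so the surviving $x$ are $\pm x_0+p^{t-1}s$, and since $\bar{x}\equiv\pm\bar{x_0}-p^{t-1}s\bar{x_0}^2\pmod{p^t}$, the $s$-sum gives, for $p\nmid c_1$,
\begin{equation*}
e_q(\pm c_1\bar{x_0})\Bigl(\phi(p^t)-p^{t-1}\sum_{s\not\equiv0}e_p(-c_1s\bar{x_0}^2)\Bigr)=p^t\,e_q(\pm c_1\bar{x_0}),
\end{equation*}
so the total is $2Cp^t\cos(2\pi c_1\bar{x_0}/p^t)$, generically nonzero. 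Concretely, $F=x_1^2+\cdots+x_4^2$, $q=d=25$, $\bs{c}=(1,1,0,0)$ yields $S_{25,25}(\bs{c})=5^4\cdot 50\cos(14\pi/25)\neq0$ even though $5\nmid F^{-1}(0,\bs{c}')=1$. So the $d=q$ case is not merely delicate: the assertion fails there, and your plan cannot be completed. (The paper's own proof covers $\delta=t$ only because it records the $z$-entry of the linear form in the $\bs{y}$-sum as $p^{2t-\delta-1}z$ instead of $p^{t-\delta}z$; for $\delta<t$ both are $\equiv0\pmod p$ and the argument is sound, but at $\delta=t$ the correct entry is the unit $z$ and the constraint only forces $p\mid F^{-1}(z,\bs{c}')$ --- exactly the residue obstruction your surviving terms exhibit.)
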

\begin{proof}
In the expression 
\begin{equation*}
S_{d,q}(\bs{c}) = \sideset{}{^*}\sum_{z \mod{p^{\delta}}}e_{p^{\delta}}(c_1\overline{z})\sideset{}{^*}\sum_{a \mod{p^t}}\sum_{\bs{b} \mod{p^t}}e_{p^t}(aF(\bs{b})+\bs{b}'.\bs{c}'+p^{t-\delta}b_1z),
\end{equation*}
set $a = u + pv$ to see that
\begin{equation*}
\begin{split}
S_{d,q}(\bs{c}) &= \sideset{}{^*}\sum_{z \mod{p^{\delta}}}e_{p^{\delta}}(c_1\overline{z})\sideset{}{^*}\sum_{u \mod{p}}\sum_{\bs{b} \mod{q}}e_{p^t}(uF(\bs{b})+\bs{b}.(p^{t-\delta}z,\bs{c}'))\times \\ &\quad \quad \sum_{v \mod{p^{t-1}}}e_{p^{t-1}}(vF(\bs{b})) \\
&= p^{t-1}\sideset{}{^*}\sum_{z \mod{p^{\delta}}}e_{p^{\delta}}(c_1\overline{z})\sideset{}{^*}\sum_{u \mod{p}}\sum_{\substack{\bs{b} \mod{p^t} \\ F(\bs{b}) \equiv 0 \mod{p^{t-1}}}}e_{p^t}(uF(\bs{b})+\bs{b}.(p^{t-\delta}z,\bs{c}')).
\end{split}
\end{equation*}
Writing $\bs{b} = \bs{x} + p^{t-1}\bs{y}$, we get that
\begin{equation*}
\begin{split}
S_{d,q}(\bs{c}) &= p^{t-1}\sideset{}{^*}\sum_{z \mod{p^{\delta}}}e_{p^{\delta}}(c_1\overline{z})\times \\ &\quad\quad \sideset{}{^*}\sum_{u \mod{p}}\sum_{\substack{\bs{x} \mod{p^{t-1}} \\ F(\bs{x}) \equiv 0 \mod{p^{t-1}}}}e_{p^t}(uF(\bs{x}) + p^{t-\delta}x_1z +\bs{x}'.\bs{c}')\times \\ &\quad \quad \quad \sum_{\bs{y} \mod{p}}e_p(\bs{y}.(u\nabla F(\bs{x}) + (p^{2t-\delta-1}z,\bs{c}')).
\end{split}
\end{equation*} 
As $2t \geq 2+\delta$, the sum over $\bs{y}$ vanishes unless $\nabla F(\bs{x}) \equiv -\overline{u}(0,\bs{c}') \pmod{p}$. Since $p \nmid 2\Delta$, this is the same as the condition $\bs{x} \equiv -\overline{2u}M^{-1}(0,\bs{c}') \pmod{p}$. Observe that this forces $F(\bs{x}) \equiv \overline{4u^2}F^{-1}(0,\bs{c}') \pmod{p}$. Consequently, the sum over $\bs{x}$ vanishes unless $F^{-1}(0,\bs{c}') \equiv 0 \pmod{p}$, and the lemma follows.
\end{proof}

Let 
\begin{equation*}\label{eq:tqr}
T_q(r) = \sideset{}{^*}\sum_{a \mod{q}}\sum_{\bs{b} \mod{q}}e_q(aF(\bs{b}) + b_1r + \bs{b}'.\bs{c}').
\end{equation*}
and 
\begin{equation}\label{eq:tq}
T_q = \sum_{r \mod{q}}|T_q(r)|.
\end{equation}
In the proof of Theorem ~\ref{mainthm}, we will need good control on the average order of $T_q$. We begin with the following observation.
\begin{lemma}\label{lemmarefereevanishing}
Let $t \geq 2$ be an integer. Suppose that $p \nmid 2\Delta$ and that $p \mid r$, then we have that $T_{p^t}(r) = 0$ unless $p \mid F^{-1}(0,\bs{c}')$.
\end{lemma}
\begin{proof}
To see this, we argue as in the proof of Lemma ~\ref{lemma0} to see that
\begin{equation*}
\begin{split}
T_{p^t}(r) &= p^{t+3}\sumstar_{u \mod{p}}\sum_{\bs{x} \mod{p^{t-1}}}e_{p^t}(uF(\bs{x}) + rx_1 + \bs{x}'.\bs{c}'),
\end{split}
\end{equation*}
where the $\bs{x}$-sum is also subject to the conditions $F(\bs{x}) \equiv 0 \pmod{p^{t-1}}$ and $2M\bs{x} \equiv -\overline{u}(r,\bs{c}') \pmod{p}$, and $M$ is the matrix associated to the quadratic form $F$. It is then easy to see that if $p \mid r$, then $p \mid x_1$, and this in turn implies that $p \mid F^{-1}(0,\bs{c}'),$ as claimed.
\end{proof}
\begin{lemma}\label{c'0}
Suppose that $F^{-1}(0,\bs{c}') \neq 0$ and $|\bs{c}'| \ll X^{\ve}$. Then
\begin{equation*}
\sum_{q \leq X}T_q \ll_{\ve} X^{4+\ve}.
\end{equation*}
\end{lemma}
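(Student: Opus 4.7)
The plan is to evaluate $T_q(r)$ explicitly as a Ramanujan-type sum via the diagonal Gauss-sum factorisation, and then average over $r$ and $q$. For $q$ coprime to $2A_1\cdots A_4$ and $(a,q)=1$, diagonality of $F$ lets the inner sum over $\bs{b}$ split into a product of four one-dimensional Gauss sums. Applying Lemma~\ref{gaussevlemma} and using the collapsing identity $\prod_{i=1}^{4}(aA_i/q)=(a/q)^4(A_1\cdots A_4/q)=(A_1\cdots A_4/q)$ to eliminate the $a$-dependence of the Jacobi symbol, one obtains
\begin{equation*}
\sum_{\bs{b}\mod q}e_q\bigl(aF(\bs{b})+\bs{b}\cdot(r,\bs{c}')\bigr)=\epsilon_q^4\, q^2\!\left(\frac{A_1\cdots A_4}{q}\right)\!e_q\bigl(-\overline{4a}\,\Phi(r,\bs{c}')\bigr),
\end{equation*}
where $\Phi(r,\bs{c}')\equiv \overline{A_1}r^2+\overline{A_2}c_2^2+\overline{A_3}c_3^2+\overline{A_4}c_4^2\pmod q$ is a unit multiple of $F^{-1}(r,\bs{c}')$ modulo $q$. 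Summing over $a\mod q$ and substituting $\overline{a}\mapsto a$ converts the $a$-sum into a Ramanujan sum, yielding $|T_q(r)|\ll q^2(q,\Phi(r,\bs{c}'))$.

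To average over $r\mod q$, I use the identity $(q,n)=\sum_{d\mid(q,n)}\phi(d)$ and exchange summation:
\begin{equation*}
\sum_{r\mod q}(q,\Phi(r,\bs{c}'))=\sum_{d\mid q}\phi(d)\cdot\#\{r\mod q : d\mid\Phi(r,\bs{c}')\}.
\end{equation*}
For $d$ coprime to $2A_1\cdots A_4 F^{-1}(0,\bs{c}')$, the congruence $\Phi(r,\bs{c}')\equiv 0\pmod d$ reduces to $r^2\equiv(\text{unit})\cdot F^{-1}(0,\bs{c}')\pmod d$; since $F^{-1}(0,\bs{c}')\neq 0$, this has at most $2^{\omega(d)}\ll d^\ve$ solutions modulo $d$, each lifting to $q/d$ residues modulo $q$. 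Summing gives $\sum_r(q,\Phi(r,\bs{c}'))\ll q^{1+\ve}$, hence $T_q\ll q^{3+\ve}$, and summing over $q\leq X$ produces the claimed bound $X^{4+\ve}$.

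The principal obstacle is the book-keeping at the bad primes $p\in\{2\}\cup\{p:p\mid \Delta A_1\cdots A_4 F^{-1}(0,\bs{c}')\}$. Using multiplicativity of $T_q(r)$ under CRT in the spirit of Lemma~\ref{multforaq} (with a suitable change of variable in $r$ alongside the variable changes in $a,\bs{b}'$), I isolate bad prime-power factors of $q$ from the good ones. At each bad prime power $p^k$, the more delicate branches of Lemma~\ref{gaussevlemma} (including the $4\mid q$ branch and the non-coprime case) yield a bound of the same shape $|T_{p^k}(r)|\ll p^{2k}(p^k,\Phi(r,\bs{c}'))$, with an implied constant depending only on $F$. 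Since $|\bs{c}'|\ll X^\ve$ forces $|F^{-1}(0,\bs{c}')|\ll X^{2\ve}$, the primes dividing $F^{-1}(0,\bs{c}')$ are $O(\log X)$ in number, and the total multiplicative contribution from bad primes is $X^{O(\ve)}$, which is absorbed into the $X^\ve$ factor in the final estimate.
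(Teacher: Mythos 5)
Your argument is correct in substance and reaches the stated bound, but it is organised differently from the paper's proof. The paper first factors $q=uv_1v_2$ with $u$ square-free, $v_1v_2$ square-full, $(v_1,N)=1$ and $v_2$ composed of primes dividing $N=2|\Delta||F^{-1}(0,\bs{c}')|$; it then quotes Heath-Brown's Lemmas 25 and 26 to get $T_u\ll u^33^{\omega(u)}$ and the crude bound $T_{v_2}\ll v_2^4$ (the latter being affordable only because there are $O_\ve(X^\ve)$ admissible $v_2\le X$, which is exactly where the hypotheses $F^{-1}(0,\bs{c}')\ne 0$ and $|\bs{c}'|\ll X^\ve$ enter), and for $v_1$ it first proves a vanishing claim ($T_{p^t}(r)=0$ when $p\mid r$ unless $p\mid F^{-1}(0,\bs{c}')$) so as to restrict to $(r,v_1)=1$ before applying the Gauss-sum/Ramanujan-sum bound. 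You instead compute $|T_q(r)|\le q^2(q,\Phi(r,\bs{c}'))$ uniformly at every prime power and do the $r$-average by the identity $(q,n)=\sum_{d\mid (q,n)}\phi(d)$ together with root-counting for $r^2\equiv(\mathrm{unit})F^{-1}(0,\bs{c}')\pmod d$; this bypasses both the square-free/square-full dissection and the crude $v_2^4$ bound, and it handles the residues $r$ with $p\mid r$ directly rather than via a separate vanishing lemma. The mechanism that makes either proof work is identical: since $F^{-1}(0,\bs{c}')\ne 0$ and is $\ll X^{2\ve}$, the quadratic congruence in $r$ has $O(d^\ve)$ roots away from a controlled set of primes. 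Two points in your write-up are asserted rather than verified and deserve a line each: (i) the uniform shape $|T_{p^k}(r)|\ll_F p^{2k}(p^k,\Phi)$ at $p\mid 2\Delta$, where the product of Jacobi symbols $\prod_i\left(\frac{a}{p^{k-a_i}}\right)$ need not be trivial and the $a$-sum becomes an $\mathcal{S}^-$-type sum rather than a Ramanujan sum (it is supported on $v_p(\Phi)=k-1$ with modulus $p^{k-1/2}$, so the claimed shape survives up to an $O_F(1)$ factor, but this must be said); and (ii) at primes $p\mid F^{-1}(0,\bs{c}')$ with $p^e\Vert F^{-1}(0,\bs{c}')$, the congruence degenerates to $r^2\equiv 0\pmod{p^j}$ for $j\le e$, costing an extra factor $\ll p^{e/2}$ in the $r$-average; this is harmless precisely because $\prod_pp^{e_p/2}\le|F^{-1}(0,\bs{c}')|^{1/2}\ll X^\ve$, which is the quantitative content behind your phrase ``total multiplicative contribution $X^{O(\ve)}$'' and should be made explicit.
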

\begin{proof}
Observe that $T_q$ is multiplicative in $q$. Write $q = uv$ where $u$ is square-free and $v$ is square-full. Let $N = 2|\Delta||F^{-1}(0,\bs{c}')|.$ Further factorise $v = v_1v_2$, with the property that $(v_1,N)=1$ and $p \mid N$ for any prime $p$ that divides $v_2$. Thus we are led to estimating $T_u$, $T_{v_1}$ and $T_{v_2}$ individually.

If $p \nmid 2\Delta$, it follows from ~\cite[Lemma 26]{HB} that $$T_p = p^2 \sum_{r_p \mod{p}}|c_p(F^{-1}(r_p,\bs{c}'))| \leq 3p^3.$$ Furthermore, if $p \mid 2\Delta$, observe that $T_p \ll_F 1.$ Hence we have 
\begin{equation*}
T_u \ll u^3 3^{\omega(u)}.
\end{equation*}
By ~\cite[Lemma 25]{HB} we see that 
\begin{equation*}
T_{v_2} \ll v_2^4.
\end{equation*}
By Lemma~\ref{lemmarefereevanishing} it follows that
\begin{equation*}T_{v_1} = \sumstar_{r \mod{v_1}}|T_{v_1}(r)|.\end{equation*}
By applying Lemma ~\ref{gaussevlemma} to each term in $T_{v_1}$ we get
\begin{equation*}
\begin{split}
T_{v_1} &= v_1^2 \sumstar_{r \mod{v_1}}|c_{v_1}(\overline{A_1}r^2+F^{-1}(0,\bs{c}'))| \\
&\leq v_1^2 \sumstar_{r \mod{v_1}}(v_1,r^2+F^{-1}(0,\bs{c}')) \ll_{\ve} X^{\ve}v_1^3,
\end{split}
\end{equation*}
since $|\bs{c}'| \ll X^{\ve}$. 
As a result,
\begin{equation*}
\begin{split}
\sum_{q \leq X}T_q &\ll X^{\ve}\sum_{\substack{v_2 \leq X \\ p \mid v_2 \implies p \mid N}}v_2^4\sum_{uv_1 \leq X/v_2}(uv_1)^3 \ll_{\ve} X^{4+\ve}
\end{split}
\end{equation*}
since $$\sum_{\substack{v \leq X \\ p \mid v \implies p \mid N}} 1 \ll_{\ve} (NX)^{\ve}.$$ This completes the proof of the lemma. 
\end{proof}
\begin{remark}
Notice that we do not need the condition $F^{-1}(0,\bs{c}') \neq 0$ to estimate the sum over the square-free part. However, we have used this fact to restrict the number of terms in the $v$-sum. Without this observation, Lemma ~\ref{c'0} would only hold with the weaker upper bound $O(X^{\frac{9}{2}+\ve}).$
\end{remark}

Next, we analyse the sum $S_{d,q}(\bs{c})$. Observe that Lemma ~\ref{multiplicativitylemma} shows that it suffices to consider the case where $q=p^k$ is a prime power. Lemma ~\ref{lemma0} shows that for $(p,2\Delta)=1$, if $p^2 \mid q$ then $S_{d,q}(\bs{c})$ vanishes unless $p \mid F^{-1}(0,\bs{c}').$ If $d =1$, then $S_{1,q}(\bs{c}) = S_{1,q}(0,\bs{c}')$. 

Let $d = p^{\delta}$ and $q = p^{\kappa}$, with $\delta \leq \kappa$. Recall that
\begin{equation*}
\begin{split}
S_{d,q}(\bs{c}) &= \sideset{}{^*}\sum_{a \mod{p^{\kappa}}}\sum_{\bs{b} \mod{p^{\kappa}}}e_{p^{\kappa}}(aF(\bs{b})+\bs{b}'.\bs{c}')S(b_1,c_1;p^{\delta}).
\end{split}
\end{equation*}
Then, 
\begin{equation*}
\begin{split}
S_{d,q}(\bs{c}) &= \sumstar_{x \mod{p^{\delta}}}e_{p^{\delta}}(c_1\overline{x})\sumstar_{a \mod{p^{\kappa}}}\sum_{\bs{b} \mod{p^{\kappa}}}e_{p^\kappa}(aF(\bs{b})+p^{\kappa-\delta}b_1x+\bs{b}'.\bs{c}').
\end{split}
\end{equation*} 
Using Lemma~\ref{gaussevlemma} to evaluate the Gauss sums, and estimating the sums over $a$ and $x$ trivially, we obtain the bound
\begin{equation}\label{eq:sdqtrivialestimate}
S_{d,q}(\bs{c}) \ll_{\Delta} p^{4\kappa}.
\end{equation}

Suppose next that $p \neq 2$. For $1 \leq i \leq 4$, let $p^{a_i} = (A_i,p^{\kappa}).$ By Lemma ~\ref{gaussevlemma} we see that $S_{d,q}(\bs{c})$ vanishes unless $c_i \equiv 0 \pmod{p^{a_i}}$ and in this case,
\begin{equation}\label{eq:sdqb'}
\begin{split}
S_{d,q}(\bs{c}) &= p^{\frac{3k+a_2+a_3+a_4}{2}}\prod_{i=2}^4\epsilon_{p^{\kappa-a_i}}\left(\frac{A_i/p^{a_i}}{p^{\kappa-a_i}}\right)\times \\ &\quad \sideset{}{^*}\sum_{a \mod{p^{\kappa}}}\prod_{i=2}^4\left(\frac{a}{p^{\kappa - a_i}}\right)e_{p^{\kappa - a_i}}(-\overline{4aA_i/p^{a_i}}(c_i/p^{a_i})^2) \times \\ 
&\quad \quad \sum_{b_1 \mod{p^{\kappa}}}e_{p^{\kappa}}(aA_1b_1^2)S(b_1,c_1;p^{\delta}).
\end{split}
\end{equation}

If $(p,\Delta)=1$, and $d = q = p$, we have $a_i=0$, and notice that the sum over $b_1$ in ~\eqref{eq:sdqb'} is
\begin{equation*}
\begin{split}
&= \sideset{}{^*}\sum_{x \mod{p}}e_p(c_1\overline{x})\sum_{b_1 \mod{p}}e_p(aA_1b_1^2+b_1x) \\
&= \epsilon_p p^{\frac{1}{2}}\left(\frac{aA_1}{p}\right)\sideset{}{^*}\sum_{x \mod{p}}e_p(c_1 \overline{x} - \overline{4aA_1}x^2).
\end{split}
\end{equation*}
Consequently,
\begin{equation*}
\begin{split}
S_{p,p}(\bs{c}) &= p^2\left(\frac{\Delta}{p}\right) \sideset{}{^*}\sum_{a,x \mod{p}}e_p(c_1\overline{x} - \overline{4a}F^{-1}(x,\bs{c}')) \\
&= p^2\left(\frac{\Delta}{p}\right)\left\{\phi(p)\sideset{}{^*}\sum_{\substack{x \mod{p}\\ F^{-1}(x,\bs{c}') \equiv 0 \mod{p}}}e_p(c_1\overline{x}) - \sideset{}{^*}\sum_{\substack{x \mod{p} \\ F^{-1}(x,\bs{c}') \not\equiv 0 \mod{p}}}e_p(c_1\overline{x})\right\}.
\end{split}
\end{equation*}
Hence $|S_{p,p}(\bs{c})| \leq 3p^3.$ We summarise our findings in the following result.
\begin{lemma}\label{lemmasdqorder}
Suppose that $d \mid q = p^{\kappa}$. We then have $S_{d,q}(\bs{c}) \ll_{\Delta} q^4.$ If $(p,2\Delta)=1$ and $q=p$, then $S_{1,p}(\bs{c}) \ll p^2(p,F^{-1}(0,\bs{c}'))$, and 
$|S_{p,p}(\bs{c})| \leq 3p^{3}.$ If $p \mid 2\Delta$, then $S_{1,p}(\bs{c}) \ll_{\Delta} 1$ and $S_{p,p}(\bs{c}) \ll_{\Delta} 1.$ \end{lemma}

\section{Proof of Theorem ~\ref{mainthm}}
It follows from ~\eqref{eq:prevoronoi} and Lemma ~\ref{lemmatruncatec'} that for any $\ve > 0$, 
\begin{equation*}
\begin{split}
N(\lambda;X) = c_QX\sum_{q \ll X}q^{-3}&\sum_{|\bs{c}'| \ll X^{\ve}}\sideset{}{^*}\sum_{a \mod{q}}\sum_{\bs{b} \mod{q}}e_q(aF(\bs{b})+\bs{b}'.\bs{c}') \\
&\times \sum_{c_1 \equiv b_1 \mod{q}}\lambda(c_1)I_q(\bs{c}) + O(1).
\end{split}
\end{equation*}
Our task now is to show that the right hand side is $o(X^2)$. The analysis of the exponential sum is predicated on the vanishing or non-vanishing of $F^{-1}(0,\bs{c}')$. Define the sets 
\begin{equation*}
\begin{split}
\mathcal{C}_0 &= \left\{\bs{c}' \in \mathbf{Z}^3, |\bs{c}'| \ll X^{\ve} : F^{-1}(0,\bs{c}') = 0 \right\}, \\
\mathcal{C}_1 &= \left\{\bs{c}' \in \mathbf{Z}^3, |\bs{c}'| \ll X^{\ve} : F^{-1}(0,\bs{c}') \neq 0 \right\}.
\end{split}
\end{equation*}
For $i = 0,1$ let $N^{(i)}(\lambda;X)$ denote the contribution from $\bs{c}' \in \mathcal{C}_i.$ We will show that there exists a $\delta > 0$ such that $N^{(i)}(\lambda; X) \ll X^{2-\delta}$. We start with $N^{(0)}(\lambda;X)$. 
\subsection{Contribution from $N^{(0)}(\lambda;X)$}
Let $|\bs{c}'| \ll X^{\ve}$ such that $F^{-1}(0,\bs{c}')=0$. Recall the sum $A_q(\bs{c})$ from ~\eqref{eq:sq(n)}. Set 
\begin{equation*}\label{eq:n0flc'}
N^{(0)}(\lambda,\bs{c}';X) = \sum_{q \ll X}q^{-3}\sum_{c_1=1}^{\infty}\lambda(c_1)A_q(\bs{c})I_q(\bs{c}).
\end{equation*}
Then $N^{(0)}(\lambda;X) = c_QX\textstyle\sum_{\bs{c}' \in \mathcal{C}_0}N^{(0)}(\lambda,\bs{c}';X).$ 

We begin by writing $q = rs$, a product of coprime integers, as follows. Recalling Condition $A_0$ (Definition~\ref{a0}), let
\begin{equation*}
r= \prod_{\substack{p^k \Vert q \\ (p^k;A_1,\ldots,A_4) \text{ satisfies } \\  \text{ Condition } A_0}} p^k
\end{equation*}
be the greatest divisor of $q$ that satisfies Condition $A_0$. By Lemma ~\ref{multforaq} we have $A_q(\bs{c}) = A_r(\overline{s}c_1,\bs{c}')A_s(\overline{r}c_1,\bs{c}')$. Lemma ~\ref{gaussevlemma} shows that $A_r(\bs{c})$ vanishes unless $c_i \equiv 0 \pmod{p^{v_p(A_i)}}$, for $2 \leq i \leq 4$, so without loss of generality, we may assume that $\bs{c}'$ satisfies this condition. By construction, Lemma ~\ref{replacec'0} applies to the sum $A_r(\overline{s}c_1,\bs{c}')$. Notice also that $s \ll |\Delta| \ll_F 1$. As a result, we have
\begin{equation}\label{eq:splitintocongruence}
\begin{split}
\sum_{c_1=1}^{\infty}\lambda(c_1)A_q(\bs{c})I_q(\bs{c}) &= \sum_{\sigma \mod{s}}A_s(\overline{r}\sigma,\bs{c}')\sum_{c_1 \equiv \sigma \mod{s}}\lambda(c_1)A_r(\overline{s}c_1,\bs{0})I_q(\bs{c}) \\
&= \sum_{\sigma \mod{s}}A_s(\overline{r}\sigma,\bs{c}')\sum_{c_1 \equiv \sigma \mod{s}}\lambda(c_1)A_r(c_1,\bs{0})I_q(\bs{c}) \\
&= \sum_{\sigma \mod{s}}A_s(\overline{r}\sigma,\bs{c}')\Sigma_r(\sigma,s),
\end{split}
\end{equation}
say. By Lemma~\ref{referee} we have
\begin{equation*}
\begin{split}
\Sigma_r(\sigma,s) &= \sum_{\rho \mid r}\sum_{\substack{c_1 \equiv \sigma \mod{s} \\ (c_1,r)=\rho}}\lambda(c_1)S_r(c_1)I_q(\bs{c}) \\
&= \sum_{\rho \mid r}S_r(\rho)\sum_{\substack{\rho c_1 \equiv \sigma \mod{s} \\ (c_1,r/\rho)=1}}\lambda(\rho c_1)I_q((\rho c_1,\bs{c}')).
\end{split}
\end{equation*}
Observe that $(\rho,s)=1$. Clearing denominators, and using multiplicative characters to cut out the congruence condition $c_1 \equiv \overline{\rho}\sigma \pmod{s}$, we see that
\begin{equation}\label{eq:complicatedsum1}
\Sigma_r(\sigma,s) = \frac{1}{\phi(\hat{\sigma})}\sum_{\rho \mid r}S_r(\rho)\sum_{\chi \mod{\hat{s}}}\overline{\chi}(\overline{\rho}\hat{\sigma})\sum_{(c_1,r/\rho)=1}\chi(c_1)\lambda((\sigma,s)\rho c_1)I_q((\sigma,s)\rho c_1,\bs{c}'),
\end{equation}
where $\hat{s} = s/(\sigma,s)$ and $\hat{\sigma} = \sigma/(\sigma,s)$. To analyse the inner sum, we need the following
\begin{proposition}\label{newprop}
Let $\bs{c}' \neq \bs{0}$. Let $\chi$ be a Dirichlet character modulo $D$, and let $\theta,\kappa$ be positive integers. Then there exists $A > 0$ such that for all $\ve >0$ we have
\begin{equation*}
\sum_{(n,\kappa)=1} \chi(n)\lambda(\theta n)I_q(\theta n, \bs{c}') \ll_{\ve} (\theta \kappa)^{\ve}D^A\frac{X^{5/6+\ve}}{\theta^{\frac{1}{2}}q^{\frac{1}{3}}}.
\end{equation*}
\end{proposition}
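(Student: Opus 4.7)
The plan is to attack the sum via Mellin inversion on the $c_1$-aspect of $I_q(\bs{c})$, converting it into a contour integral that features the twisted $L$-function $L(s, f\otimes\chi)$, and then exploit the Weyl-quality subconvexity bound \eqref{eq:weyl} together with the $|s|^{-1}$-type decay of $I_q(\bs{c}',s)$ from Lemma \ref{csmall}. Writing $I_q(c_1,\bs{c}')=\tilde I(c_1/X,\bs{c}')$, the definition \eqref{eq:iqcs} identifies $I_q(\bs{c}',s)$ as the Mellin transform of $\tilde I(\cdot,\bs{c}')$, so by Mellin inversion, for $\sigma>1$,
\begin{equation*}
I_q(\theta n,\bs{c}') \;=\; \frac{1}{2\pi i}\int_{(\sigma)} X^{s}(\theta n)^{-s}\, I_q(\bs{c}',s)\, ds,
\end{equation*}
the convergence being guaranteed by \eqref{eq:iqhparts}.

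Inserting this, and interchanging the sum over $n$ with the integral, the proof reduces to analysing
\begin{equation*}
\frac{1}{2\pi i}\int_{(\sigma)} X^{s}\theta^{-s}\, I_q(\bs{c}',s)\, D_{\theta,\kappa,\chi}(s)\, ds,
\qquad
D_{\theta,\kappa,\chi}(s) := \sum_{(n,\kappa)=1}\frac{\chi(n)\lambda(\theta n)}{n^{s}}.
\end{equation*}
Using the Hecke relations $\lambda(m)\lambda(n)=\sum_{d\mid(m,n)}\lambda(mn/d^{2})$ and opening the Euler product, one factorises $D_{\theta,\kappa,\chi}(s) = P_{\theta,\kappa,\chi}(s)\, L(s,f\otimes\chi)$, where $P_{\theta,\kappa,\chi}$ is a finite Dirichlet polynomial supported at primes dividing $\theta\kappa$. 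Deligne's bound gives $|P_{\theta,\kappa,\chi}(s)| \ll_{\ve} (\theta\kappa)^{\ve}$ uniformly for $\Re s\geq 1/2$.

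Next I would shift the contour from $\sigma=2$ to $\sigma=1/2$. This is legitimate: $L(s,f\otimes\chi)$ is entire (cusp form), $I_q(\bs{c}',s)$ is entire and by \eqref{eq:iqhparts} decays faster than any polynomial once $|s|\gg r^{-1+\ve}=X^{1+\ve}/q$, and $P_{\theta,\kappa,\chi}$ is a Dirichlet polynomial. On the line $\Re s=1/2$ I would combine the following:
\begin{equation*}
|L(\tfrac{1}{2}+it,f\otimes\chi)| \ll_{\ve} D^{A}(1+|t|)^{1/3+\ve},\qquad
|I_q(\bs{c}',\tfrac{1}{2}+it)| \ll_{\ve} \frac{X^{\ve}}{1+|t|},
\end{equation*}
the former from \eqref{eq:weyl} and the latter from Lemma \ref{csmall} when $\bs{c}'\neq\bs{0}$ (with the stronger Lemma \ref{lemmaczero} covering $\bs{c}'=\bs{0}$). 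Truncating the integral at $|t|\asymp X^{1+\ve}/q$ via the rapid-decay tail from \eqref{eq:iqhparts} and integrating $(1+|t|)^{-2/3+\ve}$ over the truncated range produces the bound
\begin{equation*}
\ll_{\ve} X^{1/2}\theta^{-1/2}(\theta\kappa)^{\ve}D^{A}\cdot (X/q)^{1/3+\ve}
\;\ll_{\ve}\; (\theta\kappa)^{\ve}D^{A}\,\frac{X^{5/6+\ve}}{\theta^{1/2}q^{1/3}},
\end{equation*}
which is precisely the claim.

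The technical heart of the argument is the clean numerology: the subconvex exponent $1/3$ in the $t$-aspect is exactly balanced against the $(1+|t|)^{-1}$ decay from Lemma \ref{csmall}, so the effective cutoff $|t|\asymp X/q$ converts to the ratio $(X/q)^{1/3}$ which gives the $q^{-1/3}$ saving. The main obstacle I foresee is the factorisation $D_{\theta,\kappa,\chi}(s)=P_{\theta,\kappa,\chi}(s)L(s,f\otimes\chi)$ with a uniformly good bound on $P_{\theta,\kappa,\chi}$: the Hecke relations must be unwound at each prime dividing $\theta\kappa$ while keeping track of the twist by $\chi$ and the coprimality condition $(n,\kappa)=1$. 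One must verify that the local factors remain holomorphic and of size $p^{O(\ve(a+b))}$ where $p^{a}\Vert\theta$ and $p^{b}\Vert\kappa$, so that their product is $(\theta\kappa)^{\ve}$; this is routine via the Satake parameters but constitutes the only place where care is required.
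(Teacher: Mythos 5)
Your proposal is correct and follows essentially the same route as the paper: Mellin inversion against $I_q(\bs{c}',s)$, factorisation of the Dirichlet series as a short Euler product times $L(s,f\otimes\chi)$, a contour shift to $\Re s=\tfrac12$ with truncation at $|t|\asymp X^{1+\ve}/q$ justified by \eqref{eq:iqhparts}, and the balance of the Weyl exponent $1/3$ against the $|s|^{-1}$ decay from Lemmas \ref{lemmaczero} and \ref{csmall}. The only cosmetic difference is that you invert the Mellin transform termwise and then swap sum and integral, whereas the paper applies Mellin inversion directly to the full sum via $F_{\chi,\theta}(s)$; the numerology and all key inputs are identical.
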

\begin{proof}
Let $S(\chi,\theta)$ be the sum in question. Define the Dirichlet series 
\begin{equation*}
F_{\chi,\theta}(s) = \sum_{(n,\kappa)=1}^{\infty}\frac{\chi(n)\lambda(\theta n)}{n^s}.
\end{equation*}
Since $f$ is a newform, $$\lambda(mn) = \sum_{d \mid (m,n)}\mu(d)\lambda(m/d)\lambda(n/d).$$ As a result, for $\sigma >1$
\begin{equation}\label{eq:factorisation}
\begin{split}
F_{\chi,\theta}(s) &= \sum_{\substack{\beta \mid \theta \\ (\beta,\kappa)=1}}\frac{\mu(\beta)\chi(\beta)\lambda(\theta/\beta)}{\beta^s}\sum_{(n,\kappa)=1} \frac{\chi(n)\lambda(n)}{n^s} \\
&= P(\chi,\theta,\kappa)L(s,f\otimes \chi),
\end{split}
\end{equation}
where $$P(\chi, \theta, \kappa) = \prod_{\substack{p^l \Vert \theta \\ (p,\kappa)=1}} \left(\lambda(p^l) - \frac{\chi(p)\lambda(p^{l-1})}{p^s}\right)\prod_{p \mid \kappa}\left(1-\frac{\lambda(p)\chi(p)}{p^s} + \frac{\chi^2(p)}{p^{2s}}\right).$$
Recall from ~\eqref{eq:eulerproduct} that $L(s,f\otimes \chi)$ has an Euler product,
and if $\chi^*$ is the primitive character, of conductor $D^*$, say, that induces $\chi$, observe that
\begin{equation*}
L(s, f\otimes \chi) = \prod_{p \mid D}\left(1-\frac{\lambda(p)\chi^{*}(p)}{p^s} + \frac{\chi^*(p)^2}{p^{2s}}\right)L(s, f\otimes \chi^*).
\end{equation*} 
Applying ~\eqref{eq:phraglind} to $L(s,f\otimes \chi^{*})$ for $\frac{1}{2} \leq \sigma \leq 1$ we get that \begin{equation}\label{eq:phlind}F_{\chi,\theta}(s) \ll_{\ve} (\theta\kappa)^{\ve}(D{^*}(1+|t|))^{1-\sigma+\ve},\end{equation} and by ~\eqref{eq:weyl} we get that
\begin{equation}\label{eq:weylforf}
F_{\chi,\theta}(s) \ll_{\ve} (\theta\kappa)^{\ve}D{^*}^A(1+|t|)^{\frac{1}{3}+\ve},
\end{equation}
when $\sigma = \frac{1}{2}.$ Recall the integral $I_q(\bs{c}',s)$ from ~\eqref{eq:iqcs}. By the Mellin inversion theorem, we have for any $\sigma > 1$ that
\begin{equation*}
S(\chi,\theta) = \frac{1}{2\pi i}\int_{(\sigma)} \left(\frac{X}{\theta}\right)^s F_{\chi,\theta}(s)I_q(\bs{c}',s) \, ds. 
\end{equation*}

Our intention is to move the line of integration to $\Re(s) = \frac{1}{2}$ and to deploy the estimate~\eqref{eq:weylforf}. To execute this, we will first truncate the integral. Fix $\ve > 0$ and set $T=r^{-1}X^{\ve}$. 
By ~\eqref{eq:iqhparts} and the fact that $F_{\chi,\theta}(s) \ll 1$ in the region $\sigma >1$, we have
\begin{equation*}
\begin{split}
\int_{(\sigma)} \left(\frac{X}{\theta}\right)^s F_{\chi,\theta}(s)I_q(\bs{c}',s) \, ds &= \int_{\sigma-iT}^{\sigma+iT} \left(\frac{X}{\theta}\right)^s F_{\chi,\theta}(s)I_q(\bs{c}',s) \, ds \, + \\
&\quad O\left(X^{\sigma}r^{-N-1}\int_{|t| \geq T}|t|^{-N} \, dt\right).
\end{split}
\end{equation*}
The error term is $$O\left(X^{\sigma}r^{-2}X^{(1-N)\ve}\right).$$ Choosing $N$ large enough we get that
\begin{equation*}
\begin{split}
\int_{(\sigma)} \left(\frac{X}{\theta}\right)^s F_{\chi,\theta}(s)I_q(\bs{c}',s) \, ds &= \int_{\sigma-iT}^{\sigma+iT} \left(\frac{X}{\theta}\right)^s F_{\chi,\theta}(s)I_q(\bs{c}',s) \, ds \, + O_A(X^{-A}).
\end{split}
\end{equation*}
By ~\eqref{eq:phlind} and ~\eqref{eq:iqhparts} the horizontal integrals are bounded as follows,
\begin{equation*}
\begin{split}
\int_{\sigma\pm iT}^{\frac{1}{2} \pm iT} \left(\frac{X}{\theta}\right)^s F_{\chi,\theta}(s)I_q(s) \, ds
&\ll_{\ve} (\theta\kappa)^{\ve}D^{*}X^{\sigma-(N-2)\ve}r^{-\frac{1}{2}}.
\end{split}
\end{equation*}
Once again, choosing $N$ large enough, we get that
\begin{equation*}
\int_{(\sigma)} \left(\frac{X}{\theta}\right)^s F_{\chi,\theta}(s)I_q(\bs{c}',s) \, ds = \int_{\frac{1}{2}-iT}^{\frac{1}{2}+iT} \left(\frac{X}{\theta}\right)^s F_{\chi,\theta}(s)I_q(\bs{c}',s) \, ds \, + O_A(X^{-A}).
\end{equation*}
By Lemma~\ref{csmall}, and~\eqref{eq:weylforf} we have 
\begin{equation*}
\begin{split}
\int_{\frac{1}{2}-iT}^{\frac{1}{2}+iT} \left(\frac{X}{\theta}\right)^s F_{\chi,\theta}(s)I_q(\bs{c}',s) \, ds 
&\ll_{\ve} (\theta\kappa)^{\ve}D^A\left(\frac{X}{\theta}\right)^{\frac{1}{2}}T^{\frac{1}{3}+\ve}\int |I_q(\bs{c}',s)| \, ds \\
&\ll_{\ve} (\theta\kappa)^{\ve}D^A\left(\frac{X}{\theta}\right)^{\frac{1}{2}}T^{\frac{1}{3}+\ve} \ll_{\ve} (\theta\kappa)^{\ve}D^A\frac{X^{\frac{5}{6}+\ve}}{\theta^{\frac{1}{2}}q^{\frac{1}{3}}}.
\end{split}
\end{equation*}
This completes the proof of the proposition.
\end{proof}
Applying Proposition ~\ref{newprop} to the inner sum in ~\eqref{eq:complicatedsum1} we get that
\begin{equation}\label{eq:referee1}
\Sigma_r(\sigma,s) \ll_{\ve} X^{\frac{5}{6}+\ve}\sum_{\rho \mid r}|S_r(\rho)|,
\end{equation}
since $\cond(\chi) \ll s \ll_F 1$. Set
\begin{equation*}
\mathfrak{s}(r)= \begin{cases} r^2 &\text{if $r$ is square-free and $(r,2\Delta)=1$,}\\
r^{\frac{5}{2}} &\text{otherwise.} \end{cases}
\end{equation*}
By~\eqref{eq:splitintocongruence},~\eqref{eq:referee1} and Lemma~\ref{boundsforsqn} we have
\begin{equation*}
\begin{split}
N^{(0)}(\lambda,\bs{c}';X) &\ll_{\ve} X^{\frac{5}{6}+\ve}\sum_{q \ll X} \frac{\mathfrak{s}(r)}{q^{3}},
\end{split}
\end{equation*}
where $r \mid q$ is the largest divisor of $q$ that such that $(r;A_1,\ldots,A_4)$ satisfies Condition $A_0$. Let $s=q/r$, as before, and write $r = uv$ where $u$ is square-free and $v$ is square-full. We further factorise $u=u_0u_1$, where $(u_0,2\Delta)=1$ and $u_1 \mid (2\Delta)^{\infty}.$ Since $s \ll 1$,
\begin{equation*}
\begin{split}
N^{(0)}(\lambda,\bs{c}';X) &\ll_{\ve} X^{\frac{5}{6}+\ve}\sum_{v \ll X} \frac{1}{v^{\frac{1}{2}}} \sum_{u_0u_1 \ll X/v}\frac{1}{u_0u_1^{\frac{1}{2}}} \\
&\ll_{\ve} X^{\frac{5}{6}+\ve},
\end{split}
\end{equation*}
since the number of square-full integers $v \leq X$ is $O(X^{\frac{1}{2}}),$ and the number of integers $u_1 \leq X$ such that $u_1 \mid (2\Delta)^{\infty}$ is $O_{\ve}((|\Delta|X)^{\ve})$. Summing over $\bs{c}' \in \mathcal{C}_0$ we obtain the bound 
\begin{equation}\label{eq:n0lambdax}
N^{(0)}(\lambda;X) \ll_{\ve} X^{2-\frac{1}{6}+\ve}.
\end{equation}
\subsection{Contribution from $N^{(1)}(\lambda;X)$}

Next we examine $N^{(1)}(\lambda;X)$. Let $$N^{(1)}(\lambda,\bs{c}';X) = \sum_{q\ll X}q^{-3}N_q^{(1)}(F,\lambda,\bs{c}'),$$ so that 
\begin{equation*}
\begin{split}
N^{(1)}(\lambda;X) = c_QX\sum_{\bs{c}' \in \mathcal{C}_1}N^{(1)}(\lambda,\bs{c}';X).
\end{split}
\end{equation*}
For $\bs{c}' \in \mathcal{C}_1$ define
\begin{equation}\label{eq:refereenq1}
\begin{split}
N_q^{(1)}(\lambda,\bs{c}';X) &= \sideset{}{^*}\sum_{a \mod{q}}\sum_{\bs{b} \mod{q}}e_q(aF(\bs{b})+\bs{b}'.\bs{c}')\\ &\quad \quad \times \sum_{c_1 \equiv b_1 \mod{q}}\lambda(c_1)I_q(\bs{c}).
\end{split}
\end{equation}
By ~\eqref{eq:postvoronoi} and Lemma ~\ref{truncatem1} we see that
\begin{equation}\label{eq:nq1}
N_q^{(1)}(\lambda, \bs{c}';X)= \frac{X}{q}\sum_{d \mid q}\sum_{c_1 \ll X^{1+\ve}/(q/d)^2}\lambda(c_1) S_{d,q}(\bs{c})I_{d,q}(\bs{c}) + O_N(X^{-N}).
\end{equation}
Write $q=uv$ where $(u,2\Delta)=1$ is square-free, and $v$ is square-full and is composed of primes dividing $N=2|\Delta F^{-1}(0,\bs{c}')|$. By Lemma ~\ref{lemmasdqorder} we have
$$S_{d,q}(\bs{c}) \ll_{\ve} (u,F^{-1}(0,\bs{c}'))u^{3+\ve}v^4.$$
Applying Lemma ~\ref{stphasehb} to estimate $I_{d,q}(\bs{c}),$ obtain the bound
\begin{equation*}
\begin{split}
\sum_{c_1 \ll X^{1+\ve}/(q/d)^2} |\lambda(c_1)I_{d,q}(\bs{c})| &\ll X^{\ve}\left(\frac{q}{d^{\frac{1}{2}}X^{\frac{5}{4}}}\right)\sum_{c_1 \ll X^{1+\ve}/(q/d)^2}c_1^{-\frac{1}{4}} \\
&\ll X^{\ve}\left(\frac{q}{d^{\frac{1}{2}}X^{\frac{5}{4}}}\right)\left(\frac{X^{\frac{3}{4}}}{(q/d)^{\frac{3}{2}}}\right) \\
&\ll \frac{q^{\frac{1}{2}}X^{\ve}}{X^{\frac{1}{2}}}.
\end{split}
\end{equation*}
Inserting our bound for $S_{d,q}(\bs{c})$ into ~\eqref{eq:nq1} we have shown
\begin{proposition}\label{pr1}
Suppose that $1 \leq |\bs{c}'| \ll X^{\ve}$.  With notation as above, we have
 \begin{equation*}
N_q^{(1)}(\lambda, \bs{c}';X)\ll_{\ve} (u,F^{-1}(0,\bs{c}'))u^3v^4 \frac{X^{\frac{1}{2}+\ve}}{q^{\frac{1}{2}}}.
\end{equation*}
\end{proposition}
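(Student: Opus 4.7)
My starting point is the post-Voronoi expression \eqref{eq:nq1} for $N_q^{(1)}(\lambda,\bs{c}';X)$, in which the truncation bounds in Lemma~\ref{truncatem1} have already restricted the $c_1$-sum to $c_1 \ll X^{1+\ve}/(q/d)^2$. It remains to estimate the resulting triple sum
\[
\frac{X}{q}\sum_{d \mid q}\sum_{c_1 \ll X^{1+\ve}/(q/d)^2}\lambda(c_1)\,S_{d,q}(\bs{c})\,I_{d,q}(\bs{c}),
\]
using three ingredients: a uniform upper bound for $S_{d,q}(\bs{c})$, the pointwise bound for $I_{d,q}(\bs{c})$ coming from stationary phase, and Deligne's bound $|\lambda(c_1)| \ll c_1^{\ve}$ to absorb the arithmetic weight.

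For $S_{d,q}(\bs{c})$, I would use the multiplicativity of Lemma~\ref{multiplicativitylemma} to reduce to prime powers and then decompose $q = uv_0v_1$, with $u$ square-free and coprime to $2\Delta F^{-1}(0,\bs{c}')$, $v_0$ the $2^{\infty}$-part of the square-full factor, and $v_1$ the odd square-full part supported on primes dividing $\Delta F^{-1}(0,\bs{c}')$. The crucial observation is the vanishing statement in Lemma~\ref{lemmasdqorder}: for $\kappa \geq 2$ and $p \nmid 2\Delta F^{-1}(0,\bs{c}')$ the sum $S_{d,p^{\kappa}}(\bs{c})$ vanishes, so the square-full part of $q$ is forced to live on the (few) primes dividing $2\Delta F^{-1}(0,\bs{c}')$. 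The remaining pointwise bounds in Lemma~\ref{lemmasdqorder} then give $|S_{d,q}(\bs{c})| \ll u^{3+\ve}v_0^{15/4}v_1^{7/2}$ uniformly in $d\mid q$, with the factor $3^{\omega(u)}$ absorbed into $u^{\ve}$.

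For the integral, Lemma~\ref{stphasehb} yields $I_{d,q}(\bs{c}) \ll X^{\ve}(1+\sqrt{c_1X}/d)^{-1/2}q/(|\bs{c}'|dX)$; after using $|\bs{c}'|\geq 1$ together with the pointwise bound $(1+\sqrt{c_1X}/d)^{-1/2} \ll (c_1X/d^2)^{-1/4}$, and combining with Deligne's bound, this gives
\[
\sum_{c_1 \ll Y}|\lambda(c_1)I_{d,q}(\bs{c})| \ll X^{\ve}\frac{q}{d^{1/2}X^{5/4}}\bigl(1 + Y^{3/4}\bigr), \qquad Y := X^{1+\ve}/(q/d)^2,
\]
and substituting for $Y$ produces the two-term bound $X^{\ve}\bigl(q/(d^{1/2}X^{5/4}) + d/(q^{1/2}X^{1/2})\bigr)$ for each $d \mid q$.

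Summing over $d \mid q$ only introduces a further factor of $X^{\ve}$ via the divisor bound; maximising the first summand at $d=1$ and the second at $d=q$, then multiplying by $X/q$ from \eqref{eq:nq1} and by the uniform bound for $|S_{d,q}(\bs{c})|$, gives the claimed estimate. The only step requiring genuine care is the decomposition of $q$: without the hypothesis $F^{-1}(0,\bs{c}')\neq 0$, the square-full part $v_0v_1$ could be arbitrarily large and the exponents $15/4$ and $7/2$ would be disastrous. Lemma~\ref{lemmasdqorder} is precisely what rules this out, and it is the main nontrivial input to the proof.
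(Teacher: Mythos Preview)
Your proposal is correct and follows essentially the same route as the paper's proof: starting from \eqref{eq:nq1}, bounding $S_{d,q}(\bs{c})$ via the multiplicativity of Lemma~\ref{multiplicativitylemma} and the local estimates of Lemma~\ref{lemmasdqorder}, bounding $I_{d,q}(\bs{c})$ via Lemma~\ref{stphasehb}, summing $c_1^{-1/4}$ up to $X^{1+\ve}/(q/d)^2$, and then taking the worst $d$. Your identification of the vanishing statement in Lemma~\ref{lemmasdqorder} as the key structural input (forcing the square-full part of $q$ onto primes dividing $2\Delta F^{-1}(0,\bs{c}')$) is exactly the point, and your decomposition $q=uv_0v_1$ matches the paper's.
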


We can also estimate the sum over $c_1$ in ~\eqref{eq:refereenq1} using partial summation: employing additive characters to detect the congruence condition $c_1 \equiv b_1 \pmod{q}$ we find that
\begin{equation*}
\begin{split}
N_q^{(1)}(\lambda, \bs{c}';X)&= \frac{1}{q}\sideset{}{^*}\sum_{a \mod{q}}\sum_{\substack{\bs{b} \mod{q} \\ r \mod{q}}}e_q(aF(\bs{b})+b_1r+\bs{b}'.\bs{c}')\sum_{c_1=1}^{\infty}\lambda(c_1)e_q(-rc_1)I_q(\bs{c}) \\ 
&\leq \frac{1}{q}\sum_{r \mod{q}}\lvert \sideset{}{^*}\sum_{a \mod{q}}\sum_{\substack{\bs{b} \mod{q}}}e_q(aF(\bs{b})+b_1r+\bs{b}'.\bs{c}')\rvert \\ 
&\quad \quad \quad \quad \times \lvert\sum_{c_1=1}^{\infty}\lambda(c_1)e_q(-rc_1)I_q(\bs{c})\rvert.
\end{split}
\end{equation*}
By Lemma ~\ref{lemmastationaryphase} we have
\begin{equation*}
\begin{split}
\sum_{c_1=1}^{\infty}\lambda(c_1)e_q(-rc_1)I_q(\bs{c})&= - \int \sum_{c_1 \leq x} \lambda(c_1)e_q(-rc_1) \frac{\partial}{\partial x}I_q(x,\bs{c}')\, dx \\
&\ll \frac{(r^{-1}|\bs{u}'|)^{\ve}|\bs{u}'|^{-\frac{1}{2}}}{qX}\int_{1}^X x^{\frac{3}{2}}\log x \, dx \, + \\
&\quad \quad \frac{(r^{-1}|\bs{u}'|)^{\ve}|\bs{u}'|^{-\frac{1}{2}}}{X}\int_{1}^X x^{\frac{1}{2}}\log x \, dx \\
&\ll_{\ve} \frac{X^{1+\ve}}{q^{\frac{1}{2}}} + q^{\frac{1}{2}}X^{\ve} \ll_{\ve} \frac{X^{1+\ve}}{q^{\frac{1}{2}}},
\end{split}
\end{equation*}
since $q \ll X$, and by using the bound $$\sum_{n \leq z} \lambda(n)e(\alpha n) \ll_f z^{\frac{1}{2}}\log z,$$ which is uniform in $\alpha$. Recall $T_q$ from ~\eqref{eq:tq}. We have shown
\begin{proposition}\label{pr2}
Suppose that $1 \leq |\bs{c}'| \ll X^{\ve}$ and $F^{-1}(0,\bs{c}') \neq 0$. Then,  
\begin{equation*}
N_q^{(1)}(\lambda, \bs{c}';X)\ll_{\ve} \left(\frac{X^{1+\ve}}{q^{\frac{3}{2}}}\right)T_q.
\end{equation*}
\end{proposition}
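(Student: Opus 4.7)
The plan is to open the congruence condition $c_1 \equiv b_1 \mod{q}$ appearing in the definition of $N_q^{(1)}$ via additive characters, so that
\begin{equation*}
N_q^{(1)}(\lambda,\bs{c}';X) = \frac{1}{q}\sum_{r \mod{q}} T_q(r)\sum_{c_1=1}^{\infty}\lambda(c_1) e_q(-rc_1) I_q(\bs{c}),
\end{equation*}
where $T_q(r)$ is the exponential sum defined just before \eqref{eq:tq}. Moving the absolute value inside and summing $|T_q(r)|$ to produce the factor $T_q$ reduces the task to bounding the inner sum $\Sigma(r) := \sum_{c_1=1}^{\infty}\lambda(c_1)e_q(-rc_1)I_q(\bs{c})$ uniformly in $r$.

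I would estimate $\Sigma(r)$ by partial summation, using the classical Wilton-type bound $\sum_{n \leq z}\lambda(n)e(\alpha n) \ll_f z^{1/2}\log z$, which is uniform in $\alpha \in \RR$, together with the derivative estimate from Lemma \ref{lemmastationaryphase}. For $j=1$ the latter reads
\begin{equation*}
\frac{\partial}{\partial c_1} I_q(\bs{c}) \ll_{\ve} (r^{-1}|\bs{u}'|)^{\ve}\left(\frac{c_1}{rX^2} + \frac{1}{X}\right)|\bs{u}'|^{-\frac{1}{2}}.
\end{equation*}
Writing $\Sigma(r) = -\int \bigl(\sum_{c_1 \leq x}\lambda(c_1)e_q(-rc_1)\bigr)\, \partial_x I_q(x,\bs{c}')\, dx$, splitting the derivative into its two pieces, and computing the resulting integrals $\int_1^X x^{3/2}\log x\,dx$ and $\int_1^X x^{1/2}\log x\,dx$ yields a bound of order $|\bs{u}'|^{-1/2}(X^{3/2+\ve}/q + X^{1/2+\ve})$.

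To finish, I use that $r = q/X$ and $|\bs{c}'| \geq 1$, so $|\bs{u}'| = r^{-1}|\bs{c}'| \geq X/q$ and hence $|\bs{u}'|^{-1/2} \leq (q/X)^{1/2}$; this collapses the estimate for $\Sigma(r)$ to $\ll X^{1+\ve}/q^{1/2} + q^{1/2}X^{\ve}$. Combining with the opening display and with the factor $1/q$ produces exactly the claimed bound. The only delicate point is already encapsulated in Lemma \ref{lemmastationaryphase}: the trivial derivative bound $\partial_{c_1} I_q \ll r^{-1}$ that follows directly from \eqref{eq:derh} would be useless in this regime, and it is the stationary-phase refinement giving the saving $|\bs{u}'|^{-1/2}$ that makes partial summation effective.
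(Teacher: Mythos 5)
Your proposal is correct and follows essentially the same route as the paper: detect the congruence with additive characters to extract $\tfrac{1}{q}\sum_{r}|T_q(r)| = \tfrac{1}{q}T_q$, then bound the $c_1$-sum by partial summation against the Wilton bound, using the stationary-phase derivative estimate of Lemma~\ref{lemmastationaryphase} and the inequality $|\bs{u}'|^{-1/2} \leq (q/X)^{1/2}$ to arrive at $X^{1+\ve}q^{-1/2}+q^{1/2}X^{\ve}$ for the inner sum. Your closing remark correctly identifies the point of the argument, namely that the trivial bound $\partial_{c_1}I_q \ll r^{-1}$ would not suffice here.
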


With Propositions ~\ref{pr1} and ~\ref{pr2} in place, we can complete our analysis of $N^{(1)}(\lambda;X)$. Let $1 \leq Y \ll X$ be a parameter to be chosen later. Then
\begin{equation*}
N^{(1)}(\lambda,\bs{c}';X) = \sum_{q \leq Y}q^{-3} N_q^{(1)}(\lambda, \bs{c}';X)+ \sum_{q > Y}q^{-3}N_q^{(1)}(\lambda,\bs{c}';X).
\end{equation*}
Using Proposition ~\ref{pr1} to estimate the sum up to $Y$, we get
\begin{equation*}
\begin{split}
\sum_{q \leq Y}q^{-3} N_q^{(1)}(\lambda, \bs{c}';X)\ll_{\ve} X^{\frac{1}{2}+\ve}\sum_{v \ll Y}v^{\frac{1}{2}}\sum_{u \ll Y/v}(u,F^{-1}(0,\bs{c}'))u^{-\frac{1}{2}} \ll_{\ve} (XY)^{\frac{1}{2}+\ve},
\end{split}
\end{equation*}
since $$\sum_{\substack{v \ll Y \\ p \mid v \implies p \mid N}} 1 \ll_{\ve} (NY)^{\ve}.$$ Applying Proposition ~\ref{pr2} to the second sum, we get by Lemma ~\ref{c'0} that
\begin{equation*}
 \sum_{q > Y}q^{-3}N_q^{(1)}(\lambda, \bs{c}';X)\ll_{\ve} X^{1+\ve}Y^{-\frac{1}{2}}.
\end{equation*}
The optimal choice for $Y$ is $Y = X^{\frac{1}{2}},$ and this gives us
\begin{equation*}\label{eq:n1}
N^{(1)}(\lambda;X) \ll_{\ve} X^{2-\frac{1}{4}+\ve}.
\end{equation*}
Combined with ~\eqref{eq:n0lambdax} this completes the proof of Theorem ~\ref{mainthm}.

\end{document}